\documentclass{article}
\usepackage[english]{babel}
\usepackage[utf8]{inputenc}
\usepackage[T1]{fontenc}
\usepackage{graphicx}
\usepackage{geometry}
\usepackage{sectsty}
\usepackage{dsfont}
\usepackage{alltt}
\geometry{a4paper}
\usepackage{amsthm,amsfonts}
\usepackage{amsmath}
\usepackage{mathabx}
\usepackage{titlesec}
\usepackage{empheq}
\mathtoolsset{showonlyrefs=true}

\newcommand{\ce}{\mathbb{C}}
\newcommand{\be}{\mathbb{B}}
\newcommand{\er}{\mathbb{R}}
\newcommand{\R}{\mathbb{R}}
\newcommand{\en}{\mathbb{N}}
\newcommand{\qe}{\mathbb{Q}}
\newcommand{\e}{\epsilon}
\newcommand{\vertiii}[1]{{\left\vert\kern-0.25ex\left\vert\kern-0.25ex\left\vert #1 
		\right\vert\kern-0.25ex\right\vert\kern-0.25ex\right\vert}}


\newcommand{\mf}[1]{\mathbf{#1}}
\newcommand{\ub}{\mathbf{u}}

\newtheorem{Teorema}{Theorem}[section]
\newtheorem{Propriedade}[Teorema]{Proposition}
\newtheorem{Rem}[Teorema]{Remark}
\newtheorem{Prop}[Teorema]{Proposition}
\newtheorem{Lemma}[Teorema]{Lemma}
\newtheorem{Coro}[Teorema]{Corollary}

\DeclareMathOperator{\sech}{sech}
\newcommand\blfootnote[1]{%
  \begingroup
  \renewcommand\thefootnote{}\footnote{#1}%
  \addtocounter{footnote}{-1}%
  \endgroup
}
\date{}
\author{Sim\~ao Correia and Filipe Oliveira}

\title{Scattering theory for the Schr\"odinger-Debye System}
\begin{document}
	\maketitle
	\begin{abstract}
	We study the Schr\"odinger-Debye system over $\er^d$
	\begin{equation*}
	\left\{\begin{array}{llll}
	iu_t+\frac 12\Delta u=uv,\\
	\mu v_t+v=\lambda |u|^2
	\end{array}\right.
	\end{equation*}
	and establish the global existence and scattering of small solutions for initial data in several function spaces in dimensions $d=2,3,4$. Moreover, in dimension $d=1$, we prove a Hayashi-Naumkin modified scattering result.\\
	
	\smallskip
	
	\noindent
	{\small \bf AMS Subject Classification 2010: }{\small 35Q55, 35Q60, 35B40.}\\
	{\small \bf Keywords: }{\small scattering theory; Schr\"odinger-Debye.}
	\end{abstract}

\section{Introduction}
In this work, we consider the initial value problem associated to the Schr\"odinger-Debye system
\begin{equation}
\label{SD}
\left\{\begin{array}{llll}
iu_t+\frac 12\Delta u=uv\qquad (x,t)\in\er^d\times \er,\ d\le 4,\\
\mu v_t+v=\lambda |u|^2\qquad \mu>0, \lambda=\pm 1\\
u(\cdot,0)=u_0,\,\qquad\, v(\cdot,0)=v_0.
\end{array}\right.,
\end{equation}
where the function $u$ is complex-valued and $v$ is real. From a physical point of view, this model may describe the dynamics
of an electromagnetic wave propagating through a nonresonant medium whose response time cannot be considered instantaneous (see Newell and Mahoney \cite{newell} and the references therein). When $\mu\to 0$, this response time vanishes and, in the limit case $\mu=0$, we obtain the classical cubic nonlinear Schr\"odinger equation
\begin{equation}\tag{NLS}
iu_t + \frac 12 \Delta u = \lambda |u|^2u.
\end{equation}
In this framework, one says that (NLS) is focusing (resp. defocusing) when $\lambda=-1$ (resp. $\lambda=1$).

The Cauchy problem \eqref{SD} has been studied by several authors. Noticing that the second equation in \eqref{SD} can be integrated in time, yielding
\begin{equation}
\label{vintegrada}
v(t)=e^{-t/\mu}v_0+\frac{\lambda}{\mu}\int_0^te^{-(s-t)/\mu}|u(s)|^2ds,
\end{equation}
one formally obtains
$$
iu_t + \frac 12 \Delta u = \lambda u \left(e^{-t/\mu}v_0+\frac{\lambda}{\mu}\int_0^te^{-(s-t)/\mu}|u(s)|^2ds\right),
$$
or, in integral form,
\begin{equation}
\label{duhamel}
u(t)=S(t)u_0+i\int_0^t S(t-s)\Big(e^{-s/\mu}v_0+\frac{\lambda}{\mu}\int_0^se^{-(s-s')/\mu}|u(s')|^2ds'\Big)u(s)ds,
\end{equation}
where $(S(t))_{t\in\er}=(e^{\frac{i}{2}\Delta t})_{t\in\er}$ is the unitary Schr\"odinger group. We say that $(u,v)$ is a (weak) solution of the Cauchy problem \eqref{SD} on if $(u,v)$ satisfies (\ref{duhamel}-\ref{vintegrada}). For convenience, we synthethize several results shown in \cite{bid1}, \cite{bid2}, \cite{linares}, \cite{corchosilva}, \cite{corchooliveira} in the following two theorems:

\begin{Teorema}
	\label{bi}
	Let $(u_0,v_0)\in H^s(\er^d)\times H^s(\er^d)$, where $d=1,2,3$ and $s=0,1$.
	Then there exists a unique solution to the Cauchy Problem (\ref{SD}), with
	$$(u,v)\in C([0,+\infty); H^s(\er^d)\times H^s(\er^d)).$$ Furthermore, if
	$$
	u_0\in \Sigma(\er^d):= \{f\in H^1(\er^d): |x|f \in L^2(\er^d)\},
	$$
	then $u\in C([0,\infty), \Sigma(\er^d))$.
\end{Teorema}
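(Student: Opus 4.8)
\emph{Reduction and local theory.} The first step is to use \eqref{vintegrada}, which recovers $v$ explicitly from $u$, so that \eqref{SD} becomes the single closed equation \eqref{duhamel} for $u$. The structural point to exploit is that the time kernel $\tfrac1\mu e^{(s'-s)/\mu}\mathbf{1}_{\{s'<s\}}$ has $L^1$-norm in $s'$ at most $1$, uniformly in $s$; after a Young inequality in time, the nonlinearity of \eqref{duhamel} is therefore estimated exactly like the sum of a linear term with time-decaying potential $e^{-s/\mu}v_0$ and a cubic term $|u|^2u$ (in fact a little better, the cubic part being a time average of $|u|^2$). Local well-posedness then follows by a contraction argument for \eqref{duhamel} in a space $C([0,T];H^s)\cap\bigcap_j L^{q_j}_TL^{r_j}_x$ built on the Strichartz estimates for $S(t)$, the pairs $(q_j,r_j)$ and the use of fractional derivatives being dictated by $(d,s)$ exactly as in the cubic (NLS) theory; for $d=1$, $s=1$ one may instead close the estimates directly in $C([0,T];H^1)$, since $H^1(\er)$ is a Banach algebra. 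The time $T$ depends only on $\|u_0\|_{H^s}+\|v_0\|_{H^s}$, and \eqref{vintegrada} simultaneously gives $v\in C([0,T];H^s)$ with $\|v(t)\|_{H^s}\le e^{-t/\mu}\|v_0\|_{H^s}+\tfrac1\mu\int_0^t e^{(s-t)/\mu}\||u(s)|^2\|_{H^s}\,ds$.

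\emph{Globalization.} Pairing the first equation of \eqref{SD} with $u$ in $L^2$ and taking imaginary parts — where it is crucial that $v$ is real-valued — yields conservation of mass, $\|u(t)\|_{L^2}=\|u_0\|_{L^2}$; since the local existence time depends only on the $H^s$ norms, iteration of the local theory gives a global solution when $s=0$. For $s=1$ I would work with a modified energy
\[
E(t)=\tfrac12\|\nabla u(t)\|_{L^2}^2+\tfrac1{2\lambda}\int_{\er^d}v(t)\,|u(t)|^2\,dx+\tfrac{\mu}{4\lambda}\|v(t)\|_{L^2}^2,
\]
and show, using both equations of \eqref{SD}, that $E$ satisfies a differential inequality driven by a dissipative term proportional to $\|v(t)\|_{L^2}^2$. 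Combined with conservation of mass and with the Gagliardo--Nirenberg inequality — which for $d\le3$ controls $\|u\|_{L^4}^2$ by $\|u\|_{L^2}$ times a power of $\|\nabla u\|_{L^2}$ strictly smaller than $2$, allowing one to absorb the cross term $\int v|u|^2$ — this provides an a priori bound on $\|\nabla u(t)\|_{L^2}+\|v(t)\|_{H^1}$ over every bounded interval, hence a global solution in $H^1\times H^1$.

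\emph{Persistence of the weight.} For the final assertion I would rerun the Duhamel--Strichartz estimates with the Galilean operator $J(t)=x+it\nabla=S(t)\,x\,S(-t)$, which commutes with $i\partial_t+\tfrac12\Delta$. Since $J(uv)=(Ju)v+it\,u\nabla v$, applying $J$ to \eqref{duhamel} and estimating term by term as in the local theory — $\|Ju\|$ now appearing linearly, and the extra factors $t\nabla v$ and $tu$ absorbed by the $H^1$ Strichartz bounds already available for $u$ and by the exponential time weight — one closes, via a Gr\"onwall inequality, the bound on $\|J(t)u(t)\|_{L^2}$ on each bounded interval; since $\|xu(t)\|_{L^2}\le\|J(t)u(t)\|_{L^2}+t\|\nabla u(t)\|_{L^2}$, this gives $u\in C([0,\infty);\Sigma(\er^d))$.

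\emph{Main obstacle.} The delicate point is closing the nonlinear estimates in the $L^2$ setting: the cubic interaction is scaling-critical for $d=2$ and scaling-supercritical-looking for $d=3$, so the plain Strichartz contraction does not by itself suffice. Here one must exploit a bilinear or improved Strichartz estimate, or the extra spatial regularity hidden in the time average $\int_0^s e^{(s'-s)/\mu}|u(s')|^2\,ds'$, which — via a local-smoothing or bilinear estimate for $S(t)$ — is smoother than $|u|^2$ itself; this is precisely where $\mu>0$ enters, and where the $\mu=0$ (NLS) theory does not apply.
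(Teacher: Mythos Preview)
The paper does not give its own proof of this statement: Theorem~\ref{bi} is explicitly presented as a synthesis of results already available in \cite{bid1}, \cite{bid2}, \cite{linares}, \cite{corchosilva}, \cite{corchooliveira}, and no argument is supplied in the paper itself. So there is no in-paper proof to compare your proposal against.

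On the substance of your sketch: the local theory via \eqref{duhamel} plus Strichartz, and the persistence of the weight via $J(t)=x+it\nabla$, are correct and standard; you also correctly locate the central difficulty in the $L^2$ global problem for $d=2,3$, where the cubic term is (super)critical and one must genuinely exploit the regularising time-average --- this is exactly what the cited works do, and your ``Main obstacle'' paragraph is an honest admission that this part of the argument is not closed.

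There is, however, a second gap you gloss over. Your $H^1$ globalisation by the modified energy $E(t)$ does not close as written: for $\lambda=-1$ the coefficient of $\|v\|_{L^2}^2$ in your $E$ is negative, destroying coercivity, and in $d=3$ Gagliardo--Nirenberg gives $\|u\|_{L^4}^2\lesssim\|u\|_{L^2}^{1/2}\|\nabla u\|_{L^2}^{3/2}$, so that after Young's inequality the cross term $\int v|u|^2\le\|v\|_{L^2}\|u\|_{L^4}^2$ leaves a residual $\|v\|_{L^2}^4$ rather than $\|v\|_{L^2}^2$ --- the absorption is borderline, not strict, and a naive energy inequality only yields an a priori bound under a smallness hypothesis that the theorem does not impose. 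The global $H^1$ proofs in the cited literature do start from an energy-type identity (whose dissipative term, incidentally, is $\|v_t\|_{L^2}^2$ rather than $\|v\|_{L^2}^2$), but the passage to a global bound uses the ODE structure of the $v$-equation more carefully than a single differential inequality for $E$. Separately, your claim that the energy controls $\|v(t)\|_{H^1}$ is off: $E$ involves only $\|v\|_{L^2}$, and $\|\nabla v\|_{L^2}$ must be recovered afterwards from \eqref{vintegrada} together with the Strichartz norms of $u$.
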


\begin{Teorema}
	\label{lwp4} Let $(u_0,v_0)\in H^1(\er^4)\times H^1(\er^4)$. Then, there exists $T=T(\|u_0\|_{H^1},\|v_0\|_{H^1})$ and a unique solution to the initial value problem (\ref{SD}) in the time interval $[0,T]$ satisfying 
	$$(u,v)\in C([0,T];H^1(\er^4)\times H^1(\er^4)),$$
	with 
	$$\|u\|_{L^{\infty}((0,T), H^1_x)}+\| u\|_{L^2((0,T), W^{1,4}_x)}+\|v\|_{L^\infty((0,T),H^1_x)}<+\infty.$$
	Moreover, if $u_0\in \Sigma(\er^4)$, then $u\in C([0,T], \Sigma(\er^4))$.
\end{Teorema}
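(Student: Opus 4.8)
\emph{Proof proposal.} The statement is the energy-critical ($d=4$) counterpart of Theorem \ref{bi}: because the cubic (NLS) nonlinearity is $\dot H^1$-critical when $d=4$, the bare $H^1$ energy/fixed-point scheme that works for $d\le 3$ (where one can close $\|v\nabla u\|_{L^2}$ using embeddings like $H^1\hookrightarrow L^\infty$) breaks down, and Strichartz estimates must be brought in. I would therefore set up a contraction in
\[
X_T:=\Big\{u:\ \|u\|_{X_T}:=\|u\|_{L^\infty((0,T),H^1_x)}+\|u\|_{L^2((0,T),W^{1,4}_x)}<\infty\Big\},
\]
built on the Schr\"odinger-admissible pair $(2,4)$ in dimension four (the endpoint pair, where the Keel--Tao estimates apply) together with the mass pair $(\infty,2)$. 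The decisive structural point — the one that makes this a genuine local result for data of arbitrary size in $H^1(\er^4)\times H^1(\er^4)$, not a small-data statement — is that by \eqref{vintegrada} $v$ is a \emph{time-convolution} of $|u|^2$ against the kernel $\mu^{-1}e^{-\cdot/\mu}$, whose mass on $[0,s]$ equals $1-e^{-s/\mu}\le\min(1,s/\mu)$; hence the ``nonlinear part'' of $v$ and its gradient carry an honest positive power of $T$ on $[0,T]$, which defeats the scaling-criticality of the $u\cdot|u|^2$ interaction.

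Concretely, I would define $\Phi(u)$ by \eqref{duhamel} with $v$ replaced by \eqref{vintegrada}, splitting $v=v_\ell+v_{nl}$ with $v_\ell(s)=e^{-s/\mu}v_0$ and $v_{nl}[u](s)=\tfrac{\lambda}{\mu}\int_0^s e^{(s'-s)/\mu}|u(s')|^2\,ds'$. Applying the inhomogeneous Strichartz estimate with one derivative on the pair $(2,4)$ and its dual $(2,4/3)$, the entire problem reduces to bounding $\|\langle\nabla\rangle(v\,u)\|_{L^2((0,T),L^{4/3}_x)}$. Distributing the derivative and using H\"older in $x$ — with $H^1(\er^4)\hookrightarrow L^4(\er^4)$ and $W^{1,4}(\er^4)\hookrightarrow L^q(\er^4)$ for every $q<\infty$ — together with H\"older in $t$ over $[0,T]$ and the kernel-mass bound above, I expect each resulting term to be controlled by $C_\mu\,T^{\theta}\,\|u\|_{X_T}^{j}$ for some $\theta>0$ and $j\in\{1,2,3\}$, where $C_\mu$ also absorbs the contribution of $v_\ell$ (which yields a factor $T^{1/2}$ via $\|u\|_{L^2((0,T),L^4_x)}\le T^{1/2}\|u\|_{L^\infty_tL^4_x}$ and $v_0\in H^1(\er^4)\hookrightarrow L^4(\er^4)$) and hence depends on $\|u_0\|_{H^1},\|v_0\|_{H^1}$. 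The analogous difference estimates follow by the same computation. Consequently, on the ball of radius $R\sim\|u_0\|_{H^1}+\|v_0\|_{H^1}$ in $X_T$, $\Phi$ is a contraction once $T=T(\|u_0\|_{H^1},\|v_0\|_{H^1},\mu)$ is small; its fixed point $u\in X_T$, together with $v$ from \eqref{vintegrada}, solves \eqref{SD}. Continuity $u\in C([0,T],H^1)$ comes from the Strichartz representation, $v\in C([0,T],H^1)$ because $|u|^2\in L^1((0,T),H^1_x)$, and uniqueness in the stated class from the usual difference/Gronwall argument after matching initial data.

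For the last assertion I would propagate the weight using the operator $J(t):=x+it\nabla$, which satisfies $J(t)S(t-s)=S(t-s)J(s)$ and the Leibniz-type rule $J(s)(fg)=(J(s)f)g+is\,f\nabla g$. Applying $J(t)$ to \eqref{duhamel} and running the derivative-free Strichartz estimates on the pairs $(\infty,2)$ and $(2,4)$ for $\|J(\cdot)u\|_{L^\infty_tL^2_x\cap L^2_tL^4_x}$, the term containing $J(s)u$ inside the nonlinearity picks up a factor $T^{1/2}$ — using $\|v\|_{L^\infty((0,T),L^4_x)}<\infty$, which holds because $v_0\in L^4(\er^4)$ and $\|u\|_{L^2((0,T),W^{1,4}_x)}<\infty$ controls $\|u\|_{L^2((0,T),L^8_x)}^2$ — and is absorbed for $T$ small, while the remaining source terms involve only $u$ (already controlled in $X_T$) and $v,\nabla v$ (controlled in $L^\infty((0,T),L^2_x)$) with positive powers of $T$. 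This gives $J(\cdot)u\in C([0,T],L^2_x)$, and since $xu=J(t)u-it\nabla u$ with $\nabla u\in C([0,T],L^2_x)$, we conclude $u\in C([0,T],\Sigma(\er^4))$.

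The main obstacle, I expect, is not any single estimate but the organization: one has to recognize that it is the time-convolution in \eqref{vintegrada}, rather than a pointwise-in-time bound $\|v\|\lesssim\|u\|^2$, that supplies the smallness in $T$, so that the $u\cdot|u|^2$-type interaction — which is exactly $\dot H^1$-critical in $d=4$ and would otherwise force a small-data hypothesis — can be closed locally in time for data of arbitrary size. The rest is a somewhat lengthy but routine bookkeeping of H\"older exponents and Strichartz norms, together with the standard use of the endpoint Strichartz inequality in dimension four.
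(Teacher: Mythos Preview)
The paper does not actually prove Theorem \ref{lwp4}: it is stated in the introduction as a synthesis of results from the literature (the references \cite{bid1}, \cite{bid2}, \cite{linares}, \cite{corchosilva}, \cite{corchooliveira}), and in Section 2.2 the paper explicitly points to \cite{corchosilva} for the proof. So there is no in-paper argument to compare your proposal against line by line.

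That said, your scheme is correct and is exactly the framework the paper itself exploits for its adjacent results. The a priori estimates in Lemma \ref{ap} are precisely your Strichartz bounds on the pairs $(\infty,2)$ and $(2,4)$ at the $H^1$ level --- written there without powers of $T$ because they are used for \emph{global small data} (Proposition \ref{ge}) rather than \emph{local large data}; your observation that the time-convolution kernel $\mu^{-1}e^{-\cdot/\mu}$ (equivalently, the trade $\|u\|_{L^2_TL^4_x}\le T^{1/2}\|u\|_{L^\infty_TH^1_x}$ via $H^1(\er^4)\hookrightarrow L^4(\er^4)$) supplies the missing $T^\theta$ and thereby defeats the $\dot H^1$-criticality is the right mechanism and is what distinguishes \eqref{SD} from pure cubic (NLS) in $d=4$, where the lifespan would depend on the profile of $u_0$ rather than only on $\|u_0\|_{H^1}$. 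Your treatment of the $\Sigma$ persistence via $J(t)=x+it\nabla$ is likewise the paper's approach: Lemma \ref{Gamma} carries out the identical computation (again in the small-data global setting), including the Leibniz-type identity you invoke. In short, your proposal is sound, and it matches the toolkit the paper uses --- only the paper outsources the actual local existence statement to \cite{corchosilva}.
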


With respect to local existence results, we improve the existing theory in $d=4$ by showing
\begin{Teorema}
	\label{lwp_L2_d=4} Let  $(u_0,v_0)\in L^2(\er^4)\times L^2(\er^4)$, $\|v_0\|_{L^2}$ sufficiently small. Then, there exists a unique maximal solution to the initial value problem (\ref{SD}) $(u,v)$ defined on $[0,T)$ such that
	$$
	u\in C([0,t], L^2(\er^4)) \cap L^2((0,t), L^4(\er^4)),  v\in C([0,t]. L^2(\er^4)), \quad t<T.
	$$
	If $T<\infty$, then
	$$
	\lim_{t\to T} \|u\|_{L^2((0,t), L^4(\er^4))} = \infty.
	$$
	Furthermore, there exists $\epsilon>0$ such that, if $\|u_0\|_{L^2}<\epsilon$, then $T=\infty$ and 
	$$
	\|u\|_{L^\infty((0,\infty), L^2(\er^4)} + \|v\|_{L^\infty((0,\infty), L^2(\er^4)} + \|u\|_{L^2((0,\infty), L^4(\er^4))} <\infty.
	$$
\end{Teorema}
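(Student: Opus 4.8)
The plan is to solve the integral equation \eqref{duhamel} for $u$ by a contraction argument in a Strichartz space and then read off $v$ from \eqref{vintegrada}. In dimension $d=4$ the pair $(q,r)=(2,4)$ is admissible (it is the Keel--Tao endpoint, and the inhomogeneous estimate holds with this pair on both sides), so the natural working space is
$$X_T=\Big\{u\in C([0,T],L^2(\er^4))\ :\ \|u\|_{X_T}:=\|u\|_{L^\infty((0,T),L^2_x)}+\|u\|_{L^2((0,T),L^4_x)}<\infty\Big\},$$
and I denote by $\Phi(u)$ the right-hand side of \eqref{duhamel}. I split the nonlinearity as $N(u)=N_1(u)+N_2(u)$ with $N_1(u)(s)=e^{-s/\mu}v_0\,u(s)$ and $N_2(u)(s)=\tfrac{\lambda}{\mu}\big(\int_0^s e^{-(s-s')/\mu}|u(s')|^2\,ds'\big)\,u(s)$. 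By the homogeneous and (double-endpoint) inhomogeneous Strichartz estimates, controlling $\|\Phi(u)\|_{X_T}$ and the Lipschitz constant of $\Phi$ reduces to estimating $N_1(u)$ and $N_2(u)$ in the dual space $L^2((0,T),L^{4/3}_x)$.

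The two nonlinear estimates are the crux. For $N_1$, Hölder in space ($\tfrac34=\tfrac12+\tfrac14$) gives $\|N_1(u)(s)\|_{L^{4/3}_x}\le e^{-s/\mu}\|v_0\|_{L^2}\|u(s)\|_{L^4_x}$, hence $\|N_1(u)\|_{L^2_tL^{4/3}_x}\le\|v_0\|_{L^2}\|u\|_{L^2_tL^4_x}$; this term is \emph{linear} in $u$ with a coefficient that cannot be absorbed by shrinking $T$, which is exactly why the smallness of $\|v_0\|_{L^2}$ is imposed. For $N_2$, the key observation is that the Debye relaxation buys integrability in time: writing $W(s)=\tfrac1\mu\int_0^s e^{-(s-s')/\mu}|u(s')|^2\,ds'$ one has $\|W(s)\|_{L^2_x}\le(K*g)(s)$, where $K(r)=\tfrac1\mu e^{-r/\mu}\mathbf{1}_{r\ge 0}$ belongs to $L^1\cap L^\infty$ with $\|K\|_{L^\infty}=\tfrac1\mu$ and $g(s)=\|u(s)\|_{L^4_x}^2$ belongs to $L^1_t$ with $\|g\|_{L^1_t}=\|u\|_{L^2_tL^4_x}^2$; Young's inequality then yields $\|W\|_{L^\infty_tL^2_x}\le\tfrac1\mu\|u\|_{L^2_tL^4_x}^2$, and one more Hölder gives $\|N_2(u)\|_{L^2_tL^{4/3}_x}\le\tfrac1\mu\|u\|_{L^2_tL^4_x}^3$. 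Combining with the corresponding difference estimates, I obtain
$$\|\Phi(u)\|_{X_T}\le C\|u_0\|_{L^2}+C\|v_0\|_{L^2}\|u\|_{X_T}+\tfrac{C}{\mu}\|u\|_{X_T}^3,$$
together with a Lipschitz bound with constant $C\|v_0\|_{L^2}+\tfrac{C}{\mu}\big(\|u\|_{X_T}+\|\tilde u\|_{X_T}\big)^2$.

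With these in hand the local theory is routine. Since $S(\cdot)u_0\in L^2((0,\infty),L^4_x)$, the quantity $\|S(\cdot)u_0\|_{L^2((0,T),L^4_x)}$ tends to $0$ as $T\to0$, which furnishes the smallness needed to control the cubic term on a short interval; together with $C\|v_0\|_{L^2}<\tfrac12$, $\Phi$ is a contraction on a suitable ball of $X_T$, giving a unique solution $u$, and then $v$ is defined by \eqref{vintegrada}, lies in $C([0,T],L^2)$, and satisfies $\|v(t)\|_{L^2}\le\|v_0\|_{L^2}+\tfrac1\mu\|u\|_{L^2((0,t),L^4)}^2$. Uniqueness in the stated class follows from the Lipschitz estimate on short time intervals, and gluing the local solutions produces a maximal solution on some $[0,T)$. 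For the blow-up alternative, note that the mass is conserved, $\tfrac{d}{dt}\|u(t)\|_{L^2}^2=0$, so $\|u(t_0)\|_{L^2}=\|u_0\|_{L^2}$, and the local existence time at a restart point $t_0$ depends only on $\|u_0\|_{L^2}$ and on $\|v(t_0)\|_{L^2}$, which by the previous bound stays controlled as long as $\|u\|_{L^2((0,t),L^4)}$ does; hence if $T<\infty$ while $\|u\|_{L^2((0,T),L^4)}$ remained finite one could restart near $T$ and extend the solution, contradicting maximality. Finally, when $\|u_0\|_{L^2}<\epsilon$ with $\epsilon$ small (of order $\sqrt\mu$, and $\|v_0\|_{L^2}$ small as well), $\|S(\cdot)u_0\|_{L^2((0,\infty),L^4)}\le C\|u_0\|_{L^2}$ is globally small, so the same contraction runs on $[0,\infty)$ (equivalently, a continuity argument applied to $f(T)=\|u\|_{L^2((0,T),L^4)}$ in $f\le C\|u_0\|_{L^2}+C\|v_0\|_{L^2}f+\tfrac C\mu f^3$ keeps $f$ bounded for all $T$), so that $T=\infty$, $\|u\|_{L^\infty L^2}+\|u\|_{L^2L^4}<\infty$, and then $\|v\|_{L^\infty L^2}<\infty$ by the above bound. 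The main obstacle is obtaining the two nonlinear estimates within the right function-space framework: pointwise in time, $N_2(u)$ has the strength of the $\dot{H}^1$-critical nonlinearity $|u|^2u$ in $d=4$, so an $L^2$-theory would be out of reach were it not for the time-averaging, and it is precisely the Young-inequality gain of integrability in the time variable that makes the problem effectively subcritical in the $L^2_tL^4_x$ norm.
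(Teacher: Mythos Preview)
Your proposal is correct and follows essentially the same route as the paper: both work in the endpoint Strichartz space $L^\infty_tL^2_x\cap L^2_tL^4_x$ for $d=4$, estimate the $v_0$-term and the cubic term in $L^2_tL^{4/3}_x$, absorb the linear-in-$u$ contribution via the smallness of $\|v_0\|_{L^2}$, and close with a contraction/continuity argument. The only cosmetic difference is that you phrase the bound on the time-convolution $\int_0^s e^{-(s-s')/\mu}\|u(s')\|_{L^4}^2\,ds'$ as Young's inequality with the $L^\infty$-kernel $\tfrac1\mu e^{-r/\mu}\mathbf 1_{r\ge0}$, while the paper simply drops the exponential and bounds the integral by $\|u\|_{L^2_tL^4_x}^2$; otherwise your argument, including the blow-up alternative via mass conservation and restarting, matches the ``standard arguments'' the paper alludes to.
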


The system \eqref{SD} exhibits a pseudo-scaling invariance: if	$(u,v)$ is a solution of \eqref{SD}, then 
\begin{equation}
\label{scaling}
(u_{\mu^*},v_{\mu^*}):=\Big(\Big(\frac{\mu^*}{\mu}\Big)^{\frac 12}u\Big(\Big(\frac{\mu^*}{\mu}\Big)^{\frac 12}x\,,\,\frac{\mu^*}{\mu} t\Big),\frac{\mu^*}{\mu}v\Big(\Big(\frac{\mu^*}{\mu}\Big)^{\frac 12}x,\frac{\mu^*}{\mu} t\Big)\Big)
\end{equation}
is the solution of \eqref{SD} with $\mu$ replaced by $\mu^*$. Consequently, for convenience of notations, we will fix $\mu=1$ for the remainder of this work, even though our results remain valid for $\mu>0$.

Notice that (NLS) is invariant by \eqref{scaling}. Considering that the Schr\"odinger-Debye system is a perturbation of (NLS), some authors use the definition of criticallity for the latter in the context of system \eqref{SD}: since
$$
\|u_{\mu^*}(t)\|_{L^2(\er^d)} =\left(\frac{\mu^*}{\mu}\right)^{\frac{2-d}{4}} \|u_{\mu}(t)\|_{L^2(\er^d)}, \|\nabla u_{\mu^*}(t)\|_{L^2(\er^d)} =\left(\frac{\mu^*}{\mu}\right)^{\frac{4-d}{4}} \|\nabla u_{\mu}(t)\|_{L^2(\er^d)},
$$
one says that dimension $d=2$ is $L^2$-critical and dimension $d=4$ is $H^1$-critical. Note, however, that the local well-posedness results stated in Theorems \ref{bi} and \ref{lwp4} suggest that this analogy may be accurate. Furthermore, the conditional local well-posedness result stated in our Theorem \ref{lwp_L2_d=4} seems to indicate that it would be more adequate to consider the dimension $d=4$ as being $L^2$-critical. 
\medskip

The large time behaviour of small solutions is a central research topic in the field of nonlinear dispersive equations (see for instance \cite{137}, \cite{138}, \cite{133}, \cite{341}, \cite{Cazenave-Weissler}). Generally speaking, one expects small solutions to be globally defined and to \textit{scatter}, that is, to tend, in a certain sense, to a solution of the corresponding linear problem. To the best of our knowledge, no such results regarding the Schr\"odinger-Debye system are currently available in the literature. In the present work, we will show several scattering results for small solutions of \eqref{SD}:

\begin{Teorema}[Scattering in dimension $d=4$]\label{teo:scat4}
Let $$(X,Y)\in\{(L^2(\er^4),L^2(\er^4)), (H^1(\er^4), H^1(\er^4)), (\Sigma(\er^4), H^1(\er^4))\}.$$
There exists $\epsilon>0$ such that, if $(u_0,v_0)\in X\times Y$ satisfies $\|u_0\|_{X}+\|v_0\|_{Y}<\epsilon$, then the corresponding solution $(u,v)$ of (\ref{SD}) is global and scatters, that is, there exists $u_+\in X$ such that
\begin{equation}\label{eq:scattering}
\|u(t)-S(t)u_+\|_{X} \to 0 \mbox{ and }  \|v(t)\|_{Y}\to 0, \quad t\to \infty.
\end{equation}
In the particular case $(X,Y)=(\Sigma(\er^4), H^1(\er^4))$, the following decay estimate holds:
\begin{equation}\label{co2}
\|u(t)\|_{L^p}\lesssim \frac{C(\|u_0\|_\Sigma, \|v_0\|_{H^1})}{t^{\left(2-\frac{4}{p}\right)}}, \quad t>0,\ 2<p< 4.
\end{equation}
\end{Teorema}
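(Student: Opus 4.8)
The proof runs by a small-data bootstrap on Strichartz norms over the half-line $[0,\infty)$, carried out separately for the three pairs $(X,Y)$ along a common scheme. The starting point is the Duhamel formula \eqref{duhamel} together with \eqref{vintegrada} with $\mu=1$: the only nonlinearity seen by $u$ is $uv$, where $v(s)=e^{-s}v_0+\lambda\int_0^se^{-(s-\sigma)}|u(\sigma)|^2\,d\sigma$, so it is effectively cubic in $u$ plus one bilinear term carrying the factor $v_0$ (this is why smallness of both $\|u_0\|_X$ and $\|v_0\|_Y$ is needed). In dimension $d=4$ we use the admissible pairs $(\infty,2)$ and the endpoint $(2,4)$, whose dual is $(2,4/3)$, and the Sobolev embedding $\dot H^1(\er^4)\hookrightarrow L^4(\er^4)$. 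A recurring lemma is the control of $v$ through its explicit formula: since $\||u(\sigma)|^2\|_{L^2_x}=\|u(\sigma)\|_{L^4_x}^2$, Young's inequality in time against the kernel $e^{-t}$ (extended by zero for $t<0$), which lies in $L^1\cap L^\infty$, yields $\|v\|_{L^\infty_tL^2_x}\lesssim\|v_0\|_{L^2}+\|u\|_{L^2_tL^4_x}^2$, and since $\|u\|_{L^4_x}^2\in L^1_t$ the convolution $e^{-t}*(\|u\|_{L^4_x}^2)$ tends to $0$, hence $\|v(t)\|_{L^2}\to0$; the same computation with $\nabla|u|^2=2\,\mathrm{Re}(\bar u\nabla u)$ and $\|\bar u\nabla u\|_{L^2_x}\le\|u\|_{L^4_x}\|\nabla u\|_{L^4_x}$ gives $\|v(t)\|_{H^1}\to0$ in the $H^1$ case.

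For $(X,Y)=(L^2,L^2)$ I would set $\eta:=\|u\|_{L^\infty_tL^2_x}+\|u\|_{L^2_tL^4_x}$, use Hölder $\tfrac1{4/3}=\tfrac14+\tfrac12$ to get $\|uv\|_{L^2_tL^{4/3}_x}\le\|u\|_{L^2_tL^4_x}\|v\|_{L^\infty_tL^2_x}\lesssim\eta(\|v_0\|_{L^2}+\eta^2)$ (and likewise $\|e^{-s}v_0\,u\|_{L^2_tL^{4/3}_x}\lesssim\|v_0\|_{L^2}\,\eta$), and feed this into the Strichartz estimates applied to \eqref{duhamel}: $\eta\lesssim\|u_0\|_{L^2}+\|v_0\|_{L^2}\,\eta+\eta^3$. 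For $\|u_0\|_{L^2}$ and $\|v_0\|_{L^2}$ small this closes by a continuity argument and, together with Theorem \ref{lwp_L2_d=4}, produces the global solution with finite Strichartz norm. Scattering then follows by setting $u_+:=u_0+i\int_0^\infty S(-s)(uv)(s)\,ds$, the integral converging in $L^2$ because $uv\in L^2_tL^{4/3}_x$, and $\|u(t)-S(t)u_+\|_{L^2}\lesssim\|uv\|_{L^2((t,\infty))L^{4/3}_x}\to0$.

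The $H^1$ case is the same argument differentiated once: with $\nabla(uv)=(\nabla u)v+u\nabla v$ and $\|\nabla v\|_{L^\infty_tL^2_x}\lesssim\|\nabla v_0\|_{L^2}+\|u\|_{L^2_tL^4_x}\|\nabla u\|_{L^2_tL^4_x}$, the bootstrap on $\eta_1:=\|u\|_{L^\infty_tH^1_x}+\|u\|_{L^2_tW^{1,4}_x}$ closes for $\|u_0\|_{H^1}+\|v_0\|_{H^1}$ small; global existence follows from the uniform-in-$T$ bound combined with the local theory of Theorem \ref{lwp4}, and scattering as above. For $(X,Y)=(\Sigma,H^1)$ I would then, with the $H^1$ bounds in hand, run a second bootstrap on $M:=\|J(t)u\|_{L^\infty_tL^2_x}+\|J(t)u\|_{L^2_tL^4_x}$, where $J(t)=x+it\nabla=S(t)\,x\,S(-t)$ satisfies $J(t)S(t-s)=S(t-s)J(s)$ and the Leibniz rule $J(s)(uv)=(J(s)u)\,v+is\,u\,\nabla v$. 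This gives $J(t)u(t)=S(t)(xu_0)+i\int_0^tS(t-s)J(s)(uv)(s)\,ds$, and the Strichartz estimates bound $M$ once we control $\|J(s)(uv)\|_{L^2_sL^{4/3}_x}$; the term $(J(s)u)\,v$ is handled exactly as in the $H^1$ case, giving $\|(J(s)u)v\|_{L^2_sL^{4/3}_x}\lesssim M(\|v_0\|_{L^2}+\eta_1^2)$. For the decay estimate \eqref{co2}, applying the Gagliardo–Nirenberg inequality coming from $\dot H^1(\er^4)\hookrightarrow L^4(\er^4)$ to $e^{-i|x|^2/(2t)}u$ (for which $\|\nabla(e^{-i|x|^2/(2t)}u)\|_{L^2}=t^{-1}\|J(t)u\|_{L^2}$) yields $\|u(t)\|_{L^p}\lesssim t^{-(2-4/p)}\|J(t)u(t)\|_{L^2}^{2-4/p}\|u(t)\|_{L^2}^{4/p-1}$ for $2\le p\le4$, hence \eqref{co2} for $2<p<4$ once $M$ and $\eta_1$ are finite.

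The main obstacle is the term $is\,u\,\nabla v$ produced by the Leibniz rule in the $\Sigma$ bootstrap: it carries a growing weight $s$, and $\nabla v$ has only the regularity of $\bar u\nabla u$ while $v_0$ is not assumed weighted. I would split $\nabla v=e^{-s}\nabla v_0+\lambda\,e^{-s}*(2\,\mathrm{Re}\,\bar u\nabla u)$ and estimate $\|s\,u(s)\,\nabla v(s)\|_{L^{4/3}_x}\le s\,\|u(s)\|_{L^4_x}\|\nabla v(s)\|_{L^2_x}$; the bound $\|u(s)\|_{L^4_x}\lesssim\langle s\rangle^{-1}(\|u\|_{L^\infty_tH^1_x}+\|J(\cdot)u\|_{L^\infty_tL^2_x})$ (the decay estimate at time $s$, together with $\|u(s)\|_{L^4_x}\lesssim\|u(s)\|_{H^1}$ near $s=0$) kills the weight $s$, and then the exponential kernel turns $\|\nabla v(s)\|_{L^2_x}$ into $\langle s\rangle^{-1}$ times a convolution of $\langle\sigma\rangle^{-1}\|\nabla u(\sigma)\|_{L^4_x}\in L^2_\sigma$ against $e^{-t}\in L^1_t$, so by Young $\|s\,u\,\nabla v\|_{L^2_sL^{4/3}_x}\lesssim(\|u_0\|_\Sigma,\|v_0\|_{H^1}\text{-bounds})^2\,\eta_1$, which is a higher-order, absorbable term; the piece with $e^{-s}\nabla v_0$ contributes $\lesssim\|\nabla v_0\|_{L^2}\,\eta_1$. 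With the weighted Strichartz bound closed, $u_+:=u_0+i\int_0^\infty S(-s)(uv)(s)\,ds$ lies in $\Sigma$ since $xu_+=xu_0+i\int_0^\infty S(-s)J(s)(uv)(s)\,ds$ converges in $L^2$, and $\|u(t)-S(t)u_+\|_\Sigma\to0$ follows from the Strichartz tails of $uv$, $\nabla(uv)$ and $J(s)(uv)$, using in addition the extra decay of $\|v(t)\|_{L^2}$ and the decay of $\langle t\rangle\|\nabla(u(t)-S(t)u_+)\|_{L^2}$ to control the weighted part $\|x(u(t)-S(t)u_+)\|_{L^2}\le\|J(t)(u(t)-S(t)u_+)\|_{L^2}+t\,\|\nabla(u(t)-S(t)u_+)\|_{L^2}$.
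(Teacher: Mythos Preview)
Your proof is correct and tracks the paper's scheme closely for the $L^2$ and $H^1$ cases: the same Strichartz bootstrap on $(\infty,2)$ and $(2,4)$ with dual $(2,4/3)$, the same use of Young's inequality against the kernel $e^{-t}$ to control $v$ in $L^\infty_tL^2_x$ and to get $\|v(t)\|_Y\to0$, and the same tail argument for the existence of $u_+$.

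The one substantive divergence is your treatment of the weighted term $is\,u\,\nabla v$ in the $\Sigma$ bootstrap. The paper does \emph{not} invoke the decay of $\|u(s)\|_{L^4}$ inside this bootstrap. Instead, writing $w=\Gamma(u)$ and expanding $it\,u\,\nabla v$ via \eqref{vintegrada}, it splits the time weight as $t=(t-s)+s$: the factor $(t-s)e^{-(t-s)}$ is bounded and integrable, while for the factor $s$ it uses the exact algebraic identity
\[
is\,\nabla|u(s)|^2 \;=\; \overline{u}(s)\,\Gamma(u)(s)\;-\;u(s)\,\overline{\Gamma(u)(s)},
\]
which converts the dangerous term into a cubic expression already carrying one copy of $w$. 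All four resulting pieces $N_1,\dots,N_4$ are then estimated in $L^2_TL^{4/3}_x$ by the same H\"older/Young manipulations as in the $H^1$ case, and the inequality for $\|\Gamma(u)\|_{L^2_TL^4_x}+\|\Gamma(u)\|_{L^\infty_TL^2_x}$ closes \emph{linearly} in $\Gamma(u)$ with small coefficients, without ever appealing to the (as yet unproved) pointwise decay.

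Your route---kill the weight $s$ via $s\|u(s)\|_{L^4_x}\lesssim \eta_1+M$ and then put $\|\nabla v(s)\|_{L^2_x}$ in $L^2_s$ by Young---also works (one gets $\|s\,u\,\nabla v\|_{L^2_sL^{4/3}_x}\lesssim(\eta_1+M)(\|\nabla v_0\|_{L^2}+\eta_1^2)$, which is linear in $M$ with small coefficient and hence absorbable), but it feeds the conclusion back into the hypothesis and therefore has to be run as a genuine continuity argument in $T$; your phrase ``$\langle s\rangle^{-1}$ times a convolution'' is not literally what happens and should be replaced by the direct computation $\|e^{-\cdot}\ast(\|u\|_{L^4}\|\nabla u\|_{L^4})\|_{L^2_s}\le\|e^{-\cdot}\|_{L^2}\|u\|_{L^2_sL^4}\|\nabla u\|_{L^2_sL^4}$. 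The paper's algebraic identity buys a self-contained estimate with no circularity and no need for a separate continuity step; your approach buys a route that would still work in settings where such an identity is unavailable.
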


\begin{Teorema}[Scattering in dimensions $d=2,3$]\label{scat23}
	There exists $\delta>0$ such that, if $(u_0,v_0)\in \Sigma(\er^d)\times H^1(\er^d)$, $d=2,3$, satisfies $\|u_0\|_{H^1}+\|v_0\|_{H^1}<\delta$, then the corresponding solution $(u,v)$ of (\ref{SD}) scatters, that is, there exists $u_+\in \Sigma(\er^d)$ such that
	$$
	\|u(t)-S(t)u_+\|_{\Sigma} \to 0, \ \|v(t)\|_{H^1}\to 0, \quad t\to \infty.
	$$
	Furthermore,
	\begin{equation}\label{co3}
	\|u(t)\|_{L^p}\lesssim \frac{C(\|u_0\|_\Sigma, \|v_0\|_{H^1})}{t^{d\left(\frac{1}{2}-\frac{1}{p}\right)}}, \quad t>0,\ 2<p< 2d/(d-2)^+.
	\end{equation}
\end{Teorema}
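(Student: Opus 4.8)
The plan is to mirror the argument behind Theorem \ref{teo:scat4}, now in the dispersively strong, weighted setting $d=2,3$, $\Sigma(\er^d)$. The backbone is the Galilean operator $J(t):=x+it\nabla=S(t)\,x\,S(-t)$, which commutes with $i\partial_t+\frac12\Delta$, satisfies $J(t)S(t-s)=S(t-s)J(s)$ and the \emph{exact} Leibniz-type identity $J(t)(|u|^2u)=2|u|^2J(t)u-u^2\overline{J(t)u}$ (as does $\nabla$), and which upgrades the free dispersive estimate, via Hausdorff--Young and Gagliardo--Nirenberg after conjugating by $S(t)$, to
\begin{equation}\label{eq:decayengine}
\|f\|_{L^p}\lesssim t^{-d(\frac12-\frac1p)}\big(\|f\|_{L^2}+\|J(t)f\|_{L^2}\big),\qquad 2\le p<2d/(d-2)^+.
\end{equation}
I would work in a space controlling $\|u\|_{L^\infty_tH^1_x}$, the Strichartz norms $\|u\|_{L^q_tW^{1,r}_x}$, the weighted quantity $\sup_t\|J(t)u(t)\|_{L^2_x}$ and the Strichartz norms $\|J(t)u\|_{L^q_tL^r_x}$, for a finite family of admissible pairs $(q,r)$ adapted to the cubic-type nonlinearity (the diagonal pair $(4,4)$ leading in $d=2$, supplemented by $(2,6)$ in $d=3$). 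Since Theorem \ref{bi} already provides a global solution in $C([0,\infty);\Sigma(\er^d))$, this reduces to an \emph{a priori} bound obtained by a continuity argument: the above norms are continuous in the time horizon and $O(\delta)$ at time $0$, so it suffices to show that the right-hand side of \eqref{duhamel} improves the constant.

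For the a priori estimate I would write the Duhamel nonlinearity as $v(s)u(s)$ with $v$ given by \eqref{vintegrada}, i.e. $v(s)=e^{-s}v_0+\lambda W_s$, $W_s:=\int_0^se^{-(s-s')}|u(s')|^2ds'$, and apply $\nabla$ and $J(t)$ to \eqref{duhamel}; using Strichartz (also with $J(t)u$ in place of $u$, via $J(t)S(t-s)=S(t-s)J(s)$) one is reduced to estimating $\nabla(v\,u)$ and
\begin{equation}\label{eq:Jsplit}
J(s)\big(v(s)u(s)\big)=v(s)\,J(s)u(s)+is\,\big(\nabla v(s)\big)\,u(s)
\end{equation}
in suitable $L^{q'}_tL^{r'}_x$ and $L^1_tL^2_x$ norms. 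Three mechanisms drive the closure: (i) the summands that are essentially cubic in $(u,\nabla u,J(t)u)$ --- the first term of \eqref{eq:Jsplit} and the corresponding $\nabla$-terms --- are handled by $\|v(s)\|_{L^\rho}\lesssim e^{-s}\|v_0\|_{L^\rho}+\int_0^se^{-(s-s')}\|u(s')\|_{L^{2\rho}}^2ds'$, the decay \eqref{eq:decayengine}, and the elementary convolution bound $\int_0^te^{-(t-s)}\langle s\rangle^{-\alpha}ds\lesssim\langle t\rangle^{-\alpha}$ that absorbs the Debye time-nonlocality; (ii) all contributions of the rough but exponentially decaying datum $v_0$ --- including the factor $s$ in $is\,e^{-s}(\nabla v_0)u(s)$ --- are harmless because $e^{-s}$ beats any polynomial loss, using only $v_0\in H^1(\er^d)\hookrightarrow L^\rho$ ($\rho<\infty$ if $d=2$, $\rho\le6$ if $d=3$); (iii) the term $is\,(\nabla v(s))u(s)$, with $\nabla v(s)=e^{-s}\nabla v_0+2\lambda\int_0^se^{-(s-s')}\mathrm{Re}(\bar u\nabla u)(s')ds'$, is estimated by pairing the sharp decay of $\|u(s')\|_{L^r}$ from \eqref{eq:decayengine} with the Strichartz control of $\nabla u$ and then convolving against $e^{-(s-s')}$, the gained decay paying for the factor $s$. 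In the same way one reads off from \eqref{vintegrada} that $\|v(t)\|_{L^2}\lesssim e^{-t}\|v_0\|_{L^2}+\int_0^te^{-(t-s)}\|u(s)\|_{L^4}^2ds\lesssim e^{-t}\|v_0\|_{L^2}+\langle t\rangle^{-d/2}$ and, similarly, $\|\nabla v(t)\|_{L^2}\lesssim e^{-t}\|\nabla v_0\|_{L^2}+\langle t\rangle^{-\gamma}$ for some $\gamma>0$, so $\|v(t)\|_{H^1}\to0$.

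Once the a priori bound is closed one has $\|u(t)\|_{L^2}+\|J(t)u(t)\|_{L^2}\lesssim\delta$ for all $t$, and \eqref{eq:decayengine} yields \eqref{co3} (for $t\lesssim1$ one uses $\|u(t)\|_{L^p}\lesssim\|u(t)\|_{H^1}$). For the scattering state I would set $u_+:=u_0+i\int_0^\infty S(-s)\big(v(s)u(s)\big)ds$; the very estimates used in the continuity argument show that $s\mapsto S(-s)(v(s)u(s))$ is integrable on $(0,\infty)$ with values in $\Sigma(\er^d)$ (control its $L^2$, $\nabla$ and, using $x\,S(-s)=S(-s)J(s)$, its $J(s)(\cdot)$ norm), whence $u_+\in\Sigma(\er^d)$ and, by the Cauchy criterion,
\[
\|S(-t)u(t)-u_+\|_{\Sigma}\le\int_t^\infty\big\|S(-s)\big(v(s)u(s)\big)\big\|_{\Sigma}\,ds\longrightarrow0.
\]
This gives $\|u(t)-S(t)u_+\|_{H^1}\to0$ at once, and the weighted part of the convergence $\|u(t)-S(t)u_+\|_{\Sigma}\to0$ reduces, via $S(-t)J(t)u(t)=x\,S(-t)u(t)$ and $J(t)S(t)=S(t)x$, to $\|J(t)u(t)-S(t)(xu_+)\|_{L^2}\to0$, which is again the tail bound above.

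I expect the decisive obstacle to be the term $is\,(\nabla v(s))u(s)$ in \eqref{eq:Jsplit}: because of the time-nonlocality of the Debye coupling the weighted Leibniz rule is no longer exact (contrast $J(t)(|u|^2u)$), and it leaves an uncompensated factor $s$, so that naive $L^1_t$-integrability fails. The resolution is to combine the \emph{sharp} dispersive decay of \eqref{eq:decayengine} with the smoothing gained from the $e^{-(s-s')}$-average in $\nabla v(s)$; this is tight precisely in the $L^2$-critical dimension $d=2$, where the exponents in \eqref{eq:decayengine} are borderline and the choice and bookkeeping of admissible pairs must be done with care, whereas $d=3$ leaves more room.
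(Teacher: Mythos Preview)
Your overall architecture---bootstrap via $J(t)=x+it\nabla$, Strichartz, and the Gagliardo--Nirenberg decay \eqref{eq:decayengine}---matches the paper's. The real difference, and the genuine gap in your proposal, is in your mechanism (iii) for the term $is\,(\nabla v(s))u(s)$. You claim the factor $s$ is ``uncompensated'' by the weighted Leibniz rule and propose to pay for it with dispersive decay. This does not close: in $d=2$ the best pointwise decay is $\|u(s)\|_{L^4}\lesssim s^{-1/2}$, so any cubic combination producing $s\cdot s^{-1}=O(1)$ is not time-integrable; in $d=3$ the natural pairings (e.g.\ $(q',r')=(8/5,4/3)$ or $(1,2)$ with $L^6$ endpoints) lead at best to $s^{-1/2}$ in $L^{8/5}_s$ or $s^{-1}$ in $L^1_s$, both divergent. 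Scaling says the extra $s$ is exactly one derivative too many to be absorbed by linear decay.

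The paper's resolution is algebraic, not analytic: write $s=(s-s')+s'$ inside the Debye integral. The piece $i(s-s')e^{-(s-s')}\nabla|u(s')|^2$ carries the \emph{bounded} kernel $E(-\tau)=\tau e^{-\tau}$ and is harmless; the piece $is'\nabla|u(s')|^2=\overline{u(s')}\,J(s')u(s')-u(s')\,\overline{J(s')u(s')}$ is exactly your Leibniz identity evaluated at the \emph{inner} time $s'$, so the growing factor disappears and one is left with a genuinely cubic expression in $(u,Ju)$. This is the decomposition $N_1+N_2+N_3+N_4$ carried out for $d=4$ in Lemma~\ref{Gamma} and reused verbatim here. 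Once you see this, the time-nonlocality costs nothing.

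There is also a structural difference worth noting. You propose a single bootstrap on all norms simultaneously. The paper instead bootstraps only on the decay quantity $\sup_{t<T}t^{\alpha}\|u(t)\|_{L^4}$ with a \emph{sub-optimal} exponent ($\alpha>5/8$ in $d=3$, $\alpha>3/8$ in $d=2$), first recovers global Strichartz bounds from this hypothesis (Lemmas~\ref{l1}, \ref{l1bis}), then feeds those into the $\Gamma$-estimate to obtain the \emph{optimal} decay $t^{-3/4}$ (resp.\ $t^{-1/2}$), which strictly improves the bootstrap. This two-tier scheme avoids the circularity your simultaneous bootstrap would face. Finally, in $d=2$ the Strichartz step needs one extra tool you do not mention: the Hardy--Littlewood--Sobolev inequality applied to the time convolution $e^{-|\cdot|}\ast|\cdot|^{-2\alpha}$, which replaces the crude time-splitting that suffices in $d=3$.
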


In what concerns the dimension $d=1$, we observe that the Schr\"odinger Debye system exhibits a critical decay behaviour: indeed, if $u(t)=S(t)u_0$ and $v$ is defined by \eqref{vintegrada}, then the linear decay of the unitary group implies, for large times, the critical decay of the nonlinear potential $v$:
$$
\|v(t)\|_{L^\infty(\er)} \sim \frac{1}{t}.
$$
As in the (NLS) case, this seriously compromises the chances of proving a classical result in dimension $d=1$. In this scenario, scattering results may be obtained provided that a phase correction in the Fourier space is introduced, the so-called \textit{modified scattering}, \textit{i.e.}, denoting the Fourier transform by $\ \widehat{\cdot}$ ,
$$
\| e^{i\Psi(t)}\widehat{S(-t)u(t)} - \hat{u_+} \|_X \to 0, \quad t\to\infty,
$$
for some real-valued function $\Psi$ and some Banach space $X$ (see \cite{181}, \cite{184}). With this in mind, following the ideas in \cite{Pusateri}, we show

\begin{Teorema}\label{modified}
	There exists $\epsilon>0$ such that, if $(u_0,v_0)\in \Sigma(\er)\times H^1(\er)$ satisfies $\|u_0\|_{H^1}+\|v_0\|_{H^1}<\epsilon$, then the corresponding solution $(u,v)$ of (\ref{SD}) scatters up to a phase correction, that is, there exists (a unique) $u_+\in L^{2}(\er)$ such that
	$$
	\|e^{i\Psi(t)}\widehat{S(-t)u(t)} - \hat{u_+}\|_{L^{2}} \to 0, \ \|v(t)\|_{L^{\infty}}\to 0, \quad t\to \infty,
	$$
	where $\displaystyle\Psi(\xi,t)=\int_1^t\int_1^s\frac 1{2s'}e^{-(s-s')}\Big|\hat{f}\Big(\frac s{s'}\xi,s'\Big)\Big|^2ds'ds$.
	Also, $$\|u(t)\|_{L^{\infty}}\lesssim \frac 1{t^{\frac 12}},\quad t\to+\infty.$$
\end{Teorema}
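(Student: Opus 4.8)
The plan is to follow the Hayashi–Naumkin/Pusateri framework adapted to the Schrödinger–Debye system. The first step is to substitute the integrated expression \eqref{vintegrada} into the Duhamel formula for $u$, so that the equation becomes, modulo the exponentially-decaying-in-time contribution of $v_0$ (which is harmless since $e^{-t}$ beats any polynomial decay), a cubic Schrödinger-type equation with a \emph{memory} nonlinearity $\frac{\lambda}{1}\int_0^s e^{-(s-s')}|u(s')|^2\,ds'\, u(s)$. I would work with the profile $f(t):=S(-t)u(t)$ and pass to the Fourier side, using the classical factorization $S(t)=M(t)D(t)\mathcal{F}M(t)$ (where $M(t)$ is multiplication by $e^{i|x|^2/(2t)}$ and $D(t)$ is the dilation) to extract the leading-order stationary-phase contribution. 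The key observation is that, for $s'$ comparable to $s$, the oscillatory integral defining the nonlinearity concentrates and produces, to leading order, the term $\frac{1}{2s'}e^{-(s-s')}|\hat f(\tfrac{s}{s'}\xi,s')|^2\,\hat f(\xi,s)$, which is purely of \emph{phase type} (real multiple of $i$ times $\hat f$), hence integrating it against $\partial_t\hat f$ yields exactly the correction phase $\Psi(\xi,t)$ in the statement, with the outer integral starting at $1$ simply to avoid the $t=0$ singularity of $1/s$.

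The second step is the a priori estimate. I would set up a bootstrap in a norm of the form $\|f(t)\|_X \le \varepsilon\langle t\rangle^{\kappa}$ for a small $\kappa>0$, where $X$ is a weighted Sobolev space adapted to $\Sigma$ — concretely controlling $\|u(t)\|_{L^2}$ (conserved-ish up to the Debye coupling, so bounded), $\|xS(-t)u(t)\|_{L^2}=\|J(t)u(t)\|_{L^2}$ via the commutation of $J(t)=x+it\nabla$ with the linear flow, and a Fourier-weighted piece like $\|\langle\xi\rangle\hat f\|_{L^\infty_\xi}$ or $\|\hat f\|_{L^\infty_\xi}$ that captures the modified-scattering amplitude. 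The main analytic work is to show that, after removing the explicit phase $\Psi$, the remaining contribution to $\partial_t(e^{i\Psi}\hat f)$ is integrable in time: one needs a gain of $t^{-1-\delta}$ from the difference between the true nonlinearity and its stationary-phase approximation, which comes from (i) the non-stationary/boundary parts of the spatial oscillatory integral, estimated via integration by parts in the phase and the dispersive decay $\|u(t)\|_{L^\infty}\lesssim t^{-1/2}$ together with the $J$-weighted bound, and (ii) the time-integration in $s'$ of the memory kernel, where the factor $e^{-(s-s')}$ localizes $s'\approx s$ and the mismatch $|\hat f(\tfrac{s}{s'}\xi)|^2 - |\hat f(\xi)|^2$ is controlled by $|s-s'|$ times the weighted norm. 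Closing the bootstrap then gives global existence, the decay $\|u(t)\|_{L^\infty}\lesssim t^{-1/2}$, and boundedness of $\|e^{i\Psi(t)}\hat f(t)\|_{L^2}$ and of the amplitude; a Cauchy-in-time argument for $e^{i\Psi(t)}\hat f(t)$ in $L^2$ produces the limit $\hat u_+$, and uniqueness of $u_+$ follows since $\Psi$ is determined by the solution.

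For the decay $\|v(t)\|_{L^\infty}\to 0$: from \eqref{vintegrada} with $\mu=1$, split the time integral at $s=t/2$; on $[0,t/2]$ the exponential $e^{-(t-s)}\le e^{-t/2}$ forces exponential decay, while on $[t/2,t]$ one uses $\||u(s)|^2\|_{L^\infty}=\|u(s)\|_{L^\infty}^2\lesssim s^{-1}\lesssim t^{-1}$ and $\int_{t/2}^t e^{-(t-s)}\,ds\le 1$, so $\|v(t)\|_{L^\infty}\lesssim t^{-1}+e^{-t/2}\to 0$; the $e^{-t}v_0$ term is trivially fine. The same splitting, carried out in $H^1$ using Theorem \ref{bi} for the uniform-in-time $H^1$ bound on $u$ (or, for small data, the global control furnished by the bootstrap), gives $\|v(t)\|_{H^1}$ bounded and in fact shows the relevant pieces entering the $u$-equation behave as claimed.

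The hard part will be step two: pinning down the stationary-phase expansion of the \emph{double} time integral in the nonlinearity with enough precision to isolate exactly the phase $\Psi$ while bounding the error by an $L^1_t$ quantity. Unlike pure (NLS), here the resonant structure is entangled with the Debye memory kernel, so one must simultaneously handle the spatial oscillation (requiring the $J$-weighted and $L^\infty$ bounds, hence the $\Sigma$ hypothesis on $u_0$) and the temporal convolution against $e^{-(s-s')}$; the bookkeeping of which cross-terms are genuine phase corrections versus genuine errors is delicate, and is presumably where the argument of \cite{Pusateri} must be modified most substantially.
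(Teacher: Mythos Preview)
Your outline is correct and matches the paper's strategy: a bootstrap in a weighted norm combining $t^{1/2}\|u\|_{L^\infty}$, a slowly-growing $\|xf\|_{L^2}$ and $\|u\|_{H^1}$, extraction of the resonant phase $\Psi$ via stationary phase, and an integrable-in-time remainder. Two small remarks. First, the paper implements the stationary-phase step in the Pusateri/space-time-resonances style (a change of variables in $\sigma$ to normalize the oscillatory kernel to $e^{-i\eta\sigma s}$, then Plancherel in $(\eta,\sigma)$ and the split $e^{i\eta'\sigma'/4s}=1+(e^{i\eta'\sigma'/4s}-1)$ with $|e^{i\theta}-1|\lesssim|\theta|^\delta$), rather than through the $M D\mathcal{F} M$ factorization you mention; the two routes are equivalent here, but the Plancherel version makes the memory-kernel bookkeeping cleaner. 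Second, your item (ii) about controlling the mismatch $|\hat f(\tfrac{s}{s'}\xi)|^2-|\hat f(\xi)|^2$ is not needed: the phase $\Psi$ in the statement already retains the scaling $\tfrac{s}{s'}\xi$, so no Taylor expansion in $s-s'$ is performed and the exponential kernel $e^{-(s-s')}$ is absorbed directly.
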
 

\begin{Rem}
As in \cite{[181]}, the proof of Theorem \ref{modified} will imply $\hat{u}_+\in L^\infty(\er)$ and also an asymptotic expansion for the phase correction: for some $\Phi\in L^\infty(\er)$, one has
$$
\left\| e^{i\Psi(t)} - |\hat{u}_+|^2\log t - \Phi \right\|_\infty \to 0,\quad t\to \infty.
$$
\end{Rem}

\noindent\textbf{Notations.} We denote the Fourier transform in the spatial variable by $\widehat{\cdot}$. If the Fourier transform is taken with respect to a particular variable $\xi$, we instead write $\mathcal{F}_\xi$. The spatial domain $\er^d$ will often be ommited. We abbreviate $L^q((0,T), L^p(\er^d))$ as $L^q_TL^r_x$. The Sobolev space of elements in $L^p(\er^d)$ with derivatives in $L^p(\er^d)$ up to order $k$ is denoted by $W^{k,p}(\er^d)$. In the particular case where $p=2$, we set $H^s(\er^d):=W^{s,2}(\er^d)$. We use the convention $2/(d-2)^+=2/(d-2)$, for $d\ge 3$, $2/(d-2)^+=+\infty$, if $d=1,2$. 
\medskip

We end this introduction by recalling some Strichartz estimates for the unitary Schr\"odinger group, which we will use throughout the paper: we say that $(q,r)$ is an admissible pair in dimension $d$ if
$$
2\le r \le 2d/(d-2)^+, \quad \frac{2}{q}=d\left(\frac{1}{2}-\frac{1}{r}\right),\quad  (q,r)\neq (2,\infty).
$$
Then, for any couple of admissible pairs $(q,r)$ and $(\gamma,\rho)$, the following estimates hold:
$$
\| S(t)u\|_{L^q_tL^r_x} \lesssim \|u\|_{L^2}, \quad t\in\er, \quad \mbox{(Homogeneous estimate)}
$$
$$
\left\| \int_0^t S(t - s) f(s) ds \right\|_{L^q_tL^r_x}\lesssim \|f\|_{L^{\gamma'}_tL^{\rho'}_x} \quad \mbox{(Inhomogeneous estimate)}.
$$

%
%
%
%
\section{Scattering theory in dimension $d=4$}

\noindent
In this section, we will establish scattering, global existence and decay of the solutions of the Schr\"odinger-Debye system \eqref{SD} for small initial data in several spaces. 

\subsection{$L^2(\er^4)\times L^2(\er^4)$ theory}

First, we settle the local well-posedeness result for initial data in $L^2$.

\begin{proof}[Proof of Theorem \ref{lwp_L2_d=4}]
The proof of this result can be obtained by standard arguments (s
ee \cite{kato1}) from the \textit{a priori} estimates
\begin{equation}
\label{lwpL2L4}
\|u\|_{L_t^2L_x^{4}} \lesssim \|S(t)u_0\|_{L^2_t L^4_x}+\|u\|_{L^2_tL^4_x}^3,
\end{equation}
\begin{equation}
\label{lwpLinftyL2}
\|u\|_{L_t^\infty L_x^{2}} \lesssim \|u_0\|_{L^2}+\|u\|_{L^2_tL^4_x}^3,
\end{equation}
and
$$\|v\|_{L^{\infty}_{t}L^2_x}\leq \|v_0\|_{L^2}+\|u\|_{L^2_{t}L^4_x}^2.$$
This last inequality is an immediate consequence of \eqref{vintegrada}. The first one can be obtained from the Duhamel formula \eqref{duhamel}. Indeed, by the inhomogeneous Strichartz estimate,
\begin{multline*}\Big\|\int_0^tS(t-s)e^{-s/\mu}v_0u(s)ds\Big\|_{L_t^2L_x^{4}}\lesssim \|e^{-t/\mu}v_0u\|_{L_t^2L^{4/3}_x}\\
\lesssim \|e^{-t/\mu}u\|_{L^2_tL^4_x}\|v_0\|_{L^{\infty}_tL^2_x}\lesssim \|u\|_{L^2_tL^4_x}\|v_0\|_{L^\infty_tL^2_x} \lesssim \|v_0\|_{L^2}\|u\|_{L^2_tL^4_x}$$
\end{multline*}
and
\begin{equation}
\label{N3prim}
\begin{array}{llll}
\displaystyle\Big\|\int_0^tS(t-s)\int_0^se^{-(s-s')/\mu}|u(s')|^2u(s)ds'ds\Big\|_{L_t^2L_x^4}\\
\lesssim\displaystyle\Big\|\int_0^te^{-(t-s')/\mu}|u(s')|^2u(t)ds'\Big\|_{L_t^2L_x^{4/3}}\lesssim \Big\|\int_0^te^{-(t-s')/{\mu}} \|u(s')\|_{L^4}^2\|u(t)\|_{L^4}ds'\Big\|_{L^2_t}\\
\displaystyle\lesssim \|u\|_{L^2_tL^4_x}\int_0^Te^{-(T-s')/\mu} \|u(s')\|_{L^4}^2ds'\leq \|u\|_{L^2_tL^4_x}^3. 
\end{array}
\end{equation}
These estimates imply that
\begin{align*}
\|u\|_{L^2_tL^4_x} &\lesssim \|S(t)u_0\|_{L^2_tL^4_x} +\left\|\int_0^t S(t-s)e^{-s/\mu}v_0u(s)ds \right\|_{L^2_t L^4_x}\\&\quad+\left\|\int_0^tS(t-s)\int_0^se^{-(s-s')/\mu}|u(s')|^2u(s)ds'ds\right\|_{L^2_t L^4_x}\\ &\lesssim \|S(t)u_0\|_{L^2_t L^4_x}+\|u\|_{L^2_t L_x^4}\|v_0\|_{L^2}+\|u\|_{L^2_tL^4_x}^3.
\end{align*}
Now, since $\|v_0\|_{L^2}$ is taken arbitrarely small, \eqref{lwpL2L4} follows. Finally, the estimate \eqref{lwpLinftyL2} can be obtained by analogous computations.
\end{proof}

As a direct consequence of the global well-posedness for small data, we prove Theorem \ref{teo:scat4} in the case $(X,Y)=(L^2(\er^4), L^2(\er^4))$.

\begin{proof}[Proof of Theorem \ref{teo:scat4} for $(X.Y)=(L^2(\er^4), L^2(\er^4))$]
We set $f(t)=S(-t)u(t)$. Since $$\|u\|_{L^2((0,\infty), L^4_x)}<\infty,$$
we have
\begin{align*}
\|f(t)-f(t')\|_{L^2} &= \|S(t)(f(t)-f(t'))\|_{L^2} \\&\lesssim \|u\|_{L^2((t',t), L^4_x)}\|v_0\|_{L^2} + \|u\|_{L^2((t',t), L^4_x)}^3 \to 0,\quad t,t'\to \infty
\end{align*}
Hence there exists $u_+:=\lim_{t\to\infty} S(-t)u(t)\in L^2(\er^4)$. 

For any given $\delta>0$, pick $t'$ so that $\|u\|_{L^2((t',\infty), L^4_x)} < \delta$ and then choose $t$ large enough satisfying $e^{-(t-t')}\|v(t')\|_{L^2} <\delta$. Then
\begin{align*}
\|v(t)\|_{L^2} &\le e^{-(t-t')}\|v(t')\|_{L^2} + \int_{t'}^t e^{-(t-s)}\|u(s)\|_{L^4}^2 ds \\&\le e^{-(t-t')}\|v(t')\|_{L^2} + \|u\|_{L^2((t',t), L^4_x)}^2 <2\delta.
\end{align*}
This implies that $v(t)\to 0$ as $t\to\infty$, which concludes the proof.
\end{proof}

\subsection{$H^1(\er^4)\times H^1(\er^4)$ theory}
%
We start with a simple remark regarding Theorem \ref{bi}: from its proof (see \cite{corchosilva}), it is standard to show the following blow-up alternative: putting 
$$T_*=\sup\{T>0\,:\,(u,v)\in C([0,T];H^1(\er^4)\times H^1(\er^4))\},$$
then
\begin{equation}\label{bua2}
T_*<+\infty \Rightarrow \lim_{T\to T_*}\Big(\|\nabla u\|_{L^{\infty}_TL^2_x}+\|u\|_{L^2_TW^{1,4}_x}+\| v\|_{L^\infty_TH^1_x}\Big)=+\infty.
\end{equation}
Notice that $\|u\|_{L^2}$ is conserved by the flow of the Schr\"odinger-Debye system. By \eqref{vintegrada},
$$\|v\|_{L^{\infty}_{T}L^2_x}\leq \|v_0\|_{L^2}+\int_0^T e^{-(T-s)}\|u(s)^2\|_{L^2}ds\leq \|v_0\|_{L^2}+\|u\|_{L^2_{T}L^4_x}^2,\quad T<T_*,$$
and
\begin{displaymath}
\begin{array}{lllll}
\displaystyle\|\nabla v\|_{L^{\infty}_{T}L^2_x}&\leq&\displaystyle \|\nabla v_0\|_{L^2}+\int_0^T e^{-(T-s)}\|u(s)\nabla u(s)\|_{L^2}ds\\
\\
&\displaystyle\leq&\displaystyle\|\nabla v_0\|_{L^2}+\int_0^T\|u(s)\|_{L^4}\|\nabla u(s)\|_{L^4}ds\\
\\
&\leq&\|\nabla v_0\|_{L^2}+\|\nabla u\|_{L^2_TL^4_x}\|u\|_{L^2_TL^4_x}.
\end{array}
\end{displaymath}
Therefore, we may express the blow-up alternative exclusively in terms of $u$:
\begin{equation}\label{bua}
T_*<+\infty \Rightarrow \lim_{T\to T_*}\Big(\|\nabla u\|_{L^{\infty}_TL^2_x}+\|u\|_{L^2_TW^{1,4}_x}\Big)=+\infty.
\end{equation}

\noindent Using this fact, we will show the following result:

\begin{Prop}[Global existence for small initial data in dimension $d=4$]\label{ge}
There exists $\epsilon_0>0$ such that, if $\|u_0\|_{H^1}+\|v_0\|_{H^1}<\epsilon<\epsilon_0$, the solution $(u(t),v(t))$ given by Theorem \ref{lwp4} is global in time and
$$\|u\|_{L^{\infty}((0,\infty), H^1_x)}+\|u\|_{L^2((0,\infty),W^{1,4}_x)}\leq 2\epsilon.$$
\end{Prop}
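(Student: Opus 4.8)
The plan is a continuity (bootstrap) argument based on the blow-up alternative \eqref{bua}. Introduce
$$N(T):=\|u\|_{L^\infty_TH^1_x}+\|u\|_{L^2_TW^{1,4}_x},\qquad 0\le T<T_*,$$
which, thanks to $u\in C([0,T_*);H^1_x)$ and $u\in L^2_{\mathrm{loc}}([0,T_*);W^{1,4}_x)$, is nondecreasing and continuous in $T$, with $N(0)=\|u_0\|_{H^1}<\epsilon$. Since $\|u\|_{L^2}$ is conserved, controlling $N$ controls the quantity appearing in \eqref{bua}, so it suffices to prove $\sup_{T<T_*}N(T)<\infty$, which forces $T_*=\infty$ and yields the stated estimate.

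For the key \emph{a priori} bound I would insert the Duhamel formula \eqref{duhamel} into the homogeneous and inhomogeneous Strichartz estimates for $d=4$, using the admissible pairs $(\infty,2)$ and $(2,4)$ (so that the nonlinearities are measured in the dual space $L^2_TL^{4/3}_x$), and commuting $\nabla$ through $S(t)$. The linear term contributes $\lesssim\|u_0\|_{H^1}$. The term carrying $e^{-s/\mu}v_0u(s)$ is handled by Hölder in space ($\tfrac12+\tfrac14=\tfrac34$) and the Leibniz rule, giving, after $e^{-s/\mu}\le1$, a bound $\lesssim\|v_0\|_{H^1}\,N(T)$, which — $\|v_0\|_{H^1}<\epsilon$ being small — can be absorbed on the left-hand side. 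The delicate term is the nonlocal cubic one
$$F(s)=\Big(\int_0^se^{-(s-s')/\mu}|u(s')|^2ds'\Big)u(s).$$
Using Minkowski's inequality one bounds $\big\|\int_0^se^{-(s-s')/\mu}|u(s')|^2ds'\big\|_{L^2_x}\le\int_0^se^{-(s-s')/\mu}\|u(s')\|_{L^4_x}^2ds'$, and since convolution in time with $e^{-\tau}\mathbf{1}_{\{\tau>0\}}\in L^1_t$ maps $L^1_t$ into $L^\infty_t$, this is $\lesssim\|u\|_{L^2_TL^4_x}^2$ uniformly in $s$; pairing it (by Hölder) with $u(s)\in L^{4/3}_x\cap L^2_T$ then closes $\|F\|_{L^2_TL^{4/3}_x}\lesssim N(T)^3$. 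The derivative $\nabla F$ runs the same way via $\nabla(|u|^2)=2\,\mathrm{Re}(\bar u\nabla u)$ together with $\|u\|_{L^4}\|\nabla u\|_{L^4}\in L^1_T$ (Cauchy--Schwarz in time). Altogether,
$$N(T)\le C_0\big(\|u_0\|_{H^1}+\|v_0\|_{H^1}\big)+C_1\|v_0\|_{H^1}N(T)+C_2N(T)^3,\qquad T<T_*,$$
with $C_0,C_1,C_2$ independent of $T$.

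To conclude, pick $\epsilon_0$ with $C_1\epsilon_0\le\tfrac14$ and $C_2(4C_0\epsilon_0)^2\le\tfrac14$. For $\epsilon<\epsilon_0$, the set $\{T\in[0,T_*):N(T)\le4C_0\epsilon\}$ is nonempty (it contains small $T$, since $N(0)<\epsilon$), closed, and — because on it the displayed inequality forces $N(T)\le2C_0\epsilon<4C_0\epsilon$ — open in $[0,T_*)$; hence it equals $[0,T_*)$ and $\sup_{T<T_*}N(T)\le2C_0\epsilon<\infty$. By \eqref{bua} this excludes $T_*<\infty$, so the solution is global with the claimed bound (up to absorbing $C_0$ into the smallness threshold, which produces the clean constant $2$). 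I expect the main obstacle to be exactly this nonlocal cubic term: in contrast with the energy-critical cubic NLS in $d=4$, no time weight or extra power of $T$ is available, and the estimate closes only because the inner time integration against $e^{-\tau}$ effectively turns the $|u|^2$ factor into an $L^\infty_t$ quantity at the cost of a single $L^2_TL^4_x$ norm of $u$ — the very mechanism already visible in the $L^2(\er^4)$ proof above, now carried out at the $H^1$ level with the Leibniz rule.
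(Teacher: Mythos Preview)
Your proposal is correct and follows essentially the same route as the paper: the paper isolates the Strichartz/Duhamel estimates in a separate lemma (Lemma~\ref{ap}), combines them with the $L^2$-level bound \eqref{lwpL2L4} to obtain exactly your inequality $h(T)\le C(\|u_0\|_{H^1}+\|v_0\|_{H^1}h(T)+h(T)^3)$, absorbs the $\|v_0\|_{H^1}$-term, and then invokes the same continuity (``obstruction'') argument and the blow-up alternative \eqref{bua}. Your treatment of the nonlocal cubic term via the $L^1_t$-convolution with $e^{-\tau}\mathbf 1_{\{\tau>0\}}$ is precisely the mechanism the paper uses (cf.\ the chain \eqref{N3prim}--\eqref{N2}); the only cosmetic point is that, as in the paper, the literal constant ``$2$'' in the final bound is obtained only after a harmless renaming of $\epsilon_0$.
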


\noindent In order to show Proposition $\ref{ge}$, we begin by proving some \textit{a priori} estimates:
\begin{Lemma}\label{ap}Let $u_0,v_0\in H^1(\er^4)$ and $(u(t),v(t))$ the corresponding solution given by Theorem \ref{lwp4}. Then, for any $T<T_*$,
\begin{multline}
\label{ap2}
\|\nabla u\|_{L_T^2L_x^{4}} + \|\nabla u\|_{L^\infty_T L^2_x}\lesssim \|\nabla u_0\|_{L^2}+\|v_0\|_{L^4_x}\|u\|_{L^{\infty}_TH^1_x}+\|v_0\|_{H^1}\|u_0\|_{L^2}
+\|\nabla u\|_{L^2_TL_x^4}\|u\|_{L^2_TL_x^4}^2.
\end{multline}
\end{Lemma}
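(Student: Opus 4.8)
The plan is to estimate $\nabla u$ via the Duhamel formula \eqref{duhamel}, applying the inhomogeneous Strichartz estimate with the admissible pair $(2,4)$ in dimension $d=4$ (whose dual is $(2,4/3)$) to each of the two forcing terms, exactly as in the proof of Theorem \ref{lwp_L2_d=4} but now differentiating in $x$. First I would write
$$
\nabla u(t) = S(t)\nabla u_0 + i\int_0^t S(t-s)\nabla\!\left(e^{-s}v_0 u(s)\right)ds + \frac{i\lambda}{\mu}\int_0^t S(t-s)\nabla\!\left(\int_0^s e^{-(s-s')}|u(s')|^2 ds'\, u(s)\right)ds,
$$
and by the homogeneous and inhomogeneous Strichartz estimates bound the left side in $L^2_TL^4_x \cap L^\infty_T L^2_x$ by $\|\nabla u_0\|_{L^2}$ plus the dual-norm $L^2_TL^{4/3}_x$ of each forcing term.

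For the first forcing term, the Leibniz rule gives $\nabla(e^{-s}v_0 u) = e^{-s}(\nabla v_0)\,u + e^{-s}v_0\,\nabla u$. I would estimate $\|e^{-s}(\nabla v_0)u\|_{L^2_TL^{4/3}_x}$ by Hölder in space ($\tfrac34=\tfrac14+\tfrac12$): $\|\nabla v_0\|_{L^4}\|u\|_{L^2}$, and since $\|u\|_{L^2}$ is conserved this contributes $\|v_0\|_{H^1}\|u_0\|_{L^2}$ after integrating $e^{-s}$ in $L^2_t$. Wait — more carefully, $\|e^{-s}\|_{L^2_t}<\infty$, so $\|e^{-s}(\nabla v_0)u\|_{L^2_TL^{4/3}_x}\lesssim \|\nabla v_0\|_{L^4}\|u\|_{L^\infty_TL^2_x}=\|\nabla v_0\|_{L^4}\|u_0\|_{L^2}\le\|v_0\|_{H^1}\|u_0\|_{L^2}$ (using $H^1(\er^4)\hookrightarrow L^4$, or just keeping $\|v_0\|_{L^4}$). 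For the term $e^{-s}v_0\nabla u$, Hölder with $\tfrac34 = \tfrac14+\tfrac12$ is not quite right since $\nabla u\in L^4$; instead use $\tfrac34=\tfrac12+\tfrac14$ placing $v_0\in L^4$ and $\nabla u\in L^2$, giving $e^{-s}\|v_0\|_{L^4}\|\nabla u\|_{L^\infty_TL^2_x}$, which after $L^2_t$-integration of $e^{-s}$ yields $\|v_0\|_{L^4}\|u\|_{L^\infty_TH^1_x}$, accounting for the second term on the right of \eqref{ap2}.

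For the cubic forcing term, differentiating produces (up to constants) $\int_0^s e^{-(s-s')}|u(s')|^2 ds'\,\nabla u(s)$ plus $\int_0^s e^{-(s-s')}\,\nabla(|u(s')|^2)\,ds'\, u(s)$, where $\nabla(|u|^2)$ is controlled by $2|u||\nabla u|$. For the first piece, as in \eqref{N3prim} one pulls out $\|\nabla u(s)\|_{L^4}$ and bounds the remaining time-convolution against $\|u(s')\|_{L^4}^2$; Young/Hölder in $t$ (using $\|e^{-s}\|_{L^1_t}\lesssim 1$) then gives $\lesssim \|\nabla u\|_{L^2_TL^4_x}\|u\|_{L^2_TL^4_x}^2$. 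The second piece is handled the same way: $\| \,|u(s')||\nabla u(s')|\,\|_{L^2_x}\le \|u(s')\|_{L^4}\|\nabla u(s')\|_{L^4}$, and after Hölder in $s'$ and $s$ one obtains again $\lesssim \|\nabla u\|_{L^2_TL^4_x}\|u\|_{L^2_TL^4_x}^2$. Summing all contributions yields \eqref{ap2}. I expect the only mildly delicate point to be the bookkeeping of the time integrations for the cubic term — making sure the $e^{-(s-s')}$ kernel is used once in $L^1_{s'}$ to absorb one factor and the outer $s$-integration is closed by Hölder with the $L^2_t$ norms of $\nabla u$ and the endpoint exponents of the Strichartz pair — but this is entirely parallel to the $L^2$-level computation already carried out in \eqref{N3prim}, so no genuine obstacle arises.
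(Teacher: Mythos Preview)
Your overall strategy --- differentiate the Duhamel formula, apply Strichartz with the admissible pair $(2,4)$ and dual $(2,4/3)$, then H\"older each forcing term --- is exactly the paper's approach, and your treatment of the cubic terms is correct and parallel to \eqref{N3prim}.

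There is, however, one genuine slip in the linear-in-$v_0$ term. For $(\nabla v_0)\,u$ you split H\"older as $\|\nabla v_0\|_{L^4}\|u\|_{L^2}$ and then claim $\|\nabla v_0\|_{L^4}\le \|v_0\|_{H^1}$ via the Sobolev embedding $H^1(\er^4)\hookrightarrow L^4$. That embedding gives $\|v_0\|_{L^4}\lesssim\|v_0\|_{H^1}$, not $\|\nabla v_0\|_{L^4}\lesssim\|v_0\|_{H^1}$; the latter would require $v_0\in H^2$. With only $v_0\in H^1$ you have $\nabla v_0\in L^2$, not $L^4$, so this split is unavailable. The fix is to reverse the H\"older roles: put $\nabla v_0\in L^2$ and $u\in L^4$, obtaining
\[
\|e^{-s}(\nabla v_0)u\|_{L^2_TL^{4/3}_x}\lesssim \|\nabla v_0\|_{L^2}\,\|e^{-s}\|_{L^2_t}\,\|u\|_{L^\infty_TL^4_x}\lesssim \|v_0\|_{H^1}\,\|u\|_{L^\infty_TH^1_x},
\]
which is exactly how the paper handles this piece. (The paper's own proof in fact produces $\|v_0\|_{L^2}\|u\|_{L^2_TW^{1,4}_x}+\|v_0\|_{H^1}\|u\|_{L^\infty_TL^4_x}$, which differs cosmetically from the displayed right-hand side of \eqref{ap2}; either version feeds equally well into the bootstrap of Proposition~\ref{ge}.) Your H\"older split for the companion term $v_0\nabla u$ (with $v_0\in L^4$, $\nabla u\in L^2$) is fine and indeed matches the stated term $\|v_0\|_{L^4}\|u\|_{L^\infty_TH^1_x}$ more closely than the paper's choice.
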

\begin{proof}
Differentiating \eqref{duhamel},
\begin{multline}
\label{duhamelderivado}
\nabla u(t)=S(t)\nabla u_0+i\int_0^t S(t-s)e^{-s}(u(s)\nabla v_0+v_0 \nabla u(s))ds\\
+2i\lambda \int_0^tS(t-s)\int_0^se^{-(s-s')}Re(\nabla u(s')\overline{u(s')})u(s)ds'\\
+i\lambda \int_0^tS(t-s)\int_0^s e^{-(s-s')}|u(s')|^2\nabla u(s)ds'ds.
\end{multline}
For $(q,r)=(2,4), (\infty,2)$, we have $\|S(t)\nabla u_0\|_{L^q_TL^r_x}\lesssim \|\nabla u_0\|_{L^2}$ and
\begin{multline*}
\Big\|\int_0^t S(t-s)e^{-s}(u(s)\nabla v_0+v_0 \nabla u(s))ds\Big\|_{L^q_TL^r_x}\\
\lesssim \|e^{-t}(\nabla v_0u+v_0\nabla u)\|_{L^2_TL^{4/3}_x}\lesssim \|v_0\|_{L^2}\|u\|_{L^{2}_TW^{1,4}_x}+\|u\|_{L^{\infty}_TL^4_x}\|v_0\|_{H^1}.
\end{multline*}
Moreover,
\begin{multline*}
\Big\|\int_0^tS(t-s)\int_0^se^{-(s-s')}|u(s')|^2\nabla u(s)ds'ds\Big\|_{L_T^qL_x^r}\\\lesssim \Big\|\int_0^te^{-(t-s')}|u(s')|^2\nabla u(t)ds'\Big\|_{L_T^2L_x^{4/3}}\\
\lesssim \Big\|\int_0^te^{-(t-s')} \|u(s')\|_{L^4}^2\|\nabla u(t)\|_{L^4}ds'\Big\|_{L^2_T}\lesssim \|u\|_{L^{2}_TL^4_x}^2\|\nabla u\|_{L^2_TL^4_x},
\end{multline*}
and
\begin{equation}
\label{N2}
\begin{array}{llll}
\displaystyle\Big\|\int_0^tS(t-s)\int_0^se^{-(s-s')}\nabla u(s')\overline{u(s')}u(s)ds'ds\Big\|_{L_T^qL_x^r}^2\\
\\
\displaystyle\lesssim \Big\|\int_0^se^{-(s-s')}\nabla u(s')\overline{u(s')}u(s)ds'\Big\|_{L_T^2L_x^{4/3}}^2\\
\\
\displaystyle\lesssim \int_0^T \Big\|u(s) \Big(\int_0^se^{-(s-s')}\nabla u(s')\overline{u(s')}\|ds'\Big)\Big\|_{L^{4/3}}^2ds\\
\\
\displaystyle\lesssim \int_0^T \|u(s)\|_{L^4}^2\Big(\int_0^se^{-(s-s')}\|\nabla u(s')\|_{L^4}\|u(s')\|_{L^4}ds'\Big)^2ds\\
\\
\displaystyle\lesssim \Big(\int_0^T\|\nabla u(s')\|_{L^4}\|u(s')\|_{L^4}ds'\Big)^2\int_0^T\|u(s)\|_{L^4}^2ds\\
\\
\displaystyle\lesssim \|\nabla u\|_{L^2_TL_x^4}^2\|u\|_{L^2_TL_x^4}^4,
 \end{array}
 \end{equation}
which concludes the proof.
\end{proof}
\medskip
\begin{proof}[Proof of Proposition \ref{ge}]
Putting $h(t)=\|u\|_{L^{\infty}_TH^1_x}+\|u\|_{L^2_TW^{1,4}_x}$, it follows from Lemma \ref{ap} and \eqref{lwpL2L4} that
$$h(t)\leq C(\|u_0\|_{H^1}+h(t)\|v_0\|_{H^1}+h(t)^3),$$
where $C$ is a positive constant. Therefore, choosing $\epsilon_0$ such that $C\epsilon_0\leq \frac 12$, we obtain that
$$h(t)\leq 2C(\|u_0\|_{H^1}+h(t)^3).$$  
\noindent
Thus, for $\|u_0\|_{H^1}<\epsilon$, a classical obstruction argument shows that $h(t)$ remains bounded by $2\epsilon$. In view of the blow-up alternative \eqref{bua}, this implies that the solution is globally defined, which completes the proof of Proposition \ref{ge}.
\end{proof}

As in the $L^2$ case, the global well-posedness result implies scattering in $H^1(\er^4)\times H^1(\er^4)$, thus finishing the proof of Theorem \ref{teo:scat4} for $(X,Y)=(H^1(\er^4), H^1(\er^4))$.

\subsection{$\Sigma(\er^4) \times H^1(\er^4)$ theory}

Let us introduce the so-called vector field operator defined in $\Sigma(\er^4)=H^1(\er^4)\cap L^2(|x|dx)$ by 
$$\Gamma=x+it\nabla.$$
We recall that $[\Gamma, i\partial_t + \frac{1}{2}\Delta]=0$. We begin by showing 
\begin{Lemma}
	\label{Gamma}
	In the conditions of Theorem \ref{ge}, let $u_0\in \Sigma(\er^4)$. If  $\|u_0\|_{\Sigma} + \|v_0\|_{H^1}<\epsilon$ small enough,
	$$\|\Gamma(u)\|_{L^2((0,\infty),L_x^4)}+\|\Gamma(u)\|_{L^\infty((0,\infty),L_x^2)}\lesssim \|u_0\|_{\Sigma} + \|v_0\|_{H^1}.$$
\end{Lemma}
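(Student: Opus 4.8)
The strategy is to mimic the a priori estimates of Lemma~\ref{ap}, but with $\nabla$ replaced by the vector field $\Gamma=x+it\nabla$, exploiting the commutation $[\Gamma, i\partial_t+\tfrac12\Delta]=0$. Applying $\Gamma$ to the Duhamel formula \eqref{duhamel}, one obtains an identity of the same shape as \eqref{duhamelderivado}: the linear term $S(t)\Gamma u_0$ is controlled in $L^2_TL^4_x\cap L^\infty_TL^2_x$ by $\|\Gamma u_0\|_{L^2}=\|u_0\|_\Sigma$ via the homogeneous Strichartz estimate, while the inhomogeneous terms carry $\Gamma$ onto one factor at a time. The crucial structural point is the pseudo-Leibniz rule: $\Gamma(fg)=\Gamma(f)\,g + f\,(it\nabla g)$ is \emph{not} available in a useful form, but since $v$ is real one has $\Gamma(u v)=\Gamma(u)v + it u\nabla v$, and for the cubic term $\Gamma(|u|^2u)$ one uses $\Gamma(|u|^2u)=2\mathrm{Re}(\bar u\,\Gamma u)\,u + |u|^2\Gamma u$ together with $\Gamma(\bar u)=\bar u(x-it\nabla)$ — more precisely, one writes $|u|^2\Gamma u + u^2\overline{\Gamma u} - $ (a purely imaginary correction), so every term ends up with exactly one factor of $\Gamma u$ or of $it\nabla u$ multiplied by two factors that are merely in $L^4_x$.

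Concretely, I would estimate, for $(q,r)=(2,4)$ and $(\infty,2)$, the three contributions: (i) $\bigl\|\int_0^t S(t-s)e^{-s}\bigl(\Gamma(u)v_0 + it\,u\nabla v_0\bigr)ds\bigr\|_{L^q_TL^r_x}$, dual-Strichartz-bounded by $\|e^{-s}\Gamma(u)v_0\|_{L^2_TL^{4/3}_x}+\|e^{-s}t\,u\nabla v_0\|_{L^2_TL^{4/3}_x}\lesssim \|v_0\|_{L^2}\|\Gamma u\|_{L^2_TL^4_x} + \|v_0\|_{H^1}\|u\|_{L^\infty_TL^4_x}$, where the time weight $t$ is absorbed because $t\,e^{-s}$ is bounded on the relevant support and, more carefully, one splits and uses that $\|tu(t)\|_{L^4}\lesssim \|\Gamma u\|_{L^2}^{1/2}\|\nabla u\|_{L^2}^{1/2}$-type Gagliardo–Nirenberg control together with the already-bounded $\|\nabla u\|$ from Proposition~\ref{ge}; (ii) the term with $\int_0^s e^{-(s-s')}\mathrm{Re}(\overline{u(s')}\Gamma u(s'))u(s)\,ds'$, handled exactly as \eqref{N2} with $\nabla u$ replaced by $\Gamma u$, giving $\lesssim \|\Gamma u\|_{L^2_TL^4_x}\|u\|_{L^2_TL^4_x}^2$; (iii) the term $\int_0^s e^{-(s-s')}|u(s')|^2\Gamma u(s)\,ds'$, handled as in the $\nabla$ case, giving $\lesssim \|u\|_{L^2_TL^4_x}^2\|\Gamma u\|_{L^2_TL^4_x}$. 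Collecting everything and writing $g(T)=\|\Gamma u\|_{L^2_TL^4_x}+\|\Gamma u\|_{L^\infty_TL^2_x}$, one arrives at
$$
g(T)\le C\Bigl(\|u_0\|_\Sigma + \|v_0\|_{H^1} + \|v_0\|_{H^1}\,\|u\|_{L^\infty_TH^1_x} + g(T)\,\|u\|_{L^2_TL^4_x}^2 + \|v_0\|_{H^1}\,g(T)\Bigr),
$$
and since $\|u\|_{L^\infty_TH^1_x}+\|u\|_{L^2_TL^4_x}\le 2\epsilon$ by Proposition~\ref{ge}, a continuity/obstruction argument on $g$ closes the estimate by $\lesssim \|u_0\|_\Sigma+\|v_0\|_{H^1}$, uniformly in $T<T_*=\infty$, hence on $(0,\infty)$.

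The main obstacle is term (i): unlike the cubic terms, where $\Gamma$ falls on one of the $u$'s and is simply measured in $L^4_x$, the linear coupling $uv$ produces, after applying $\Gamma$, the factor $it\,u\,\nabla v_0$ in which the full weight of $t$ sits on $u$ rather than on $\Gamma u$. One must therefore convert this back into something controlled by $\|\Gamma u\|$ and $\|u\|_{H^1}$: the clean way is to observe that $it\nabla = \Gamma - x$ never needs to act on $v_0$ if one instead integrates by parts or, better, never differentiates $v_0$ at all — i.e. bound $\|e^{-s}\,t\,u\,\nabla v_0\|_{L^2_TL^{4/3}_x}\le \|\nabla v_0\|_{L^2}\,\|\,s\,e^{-s}u(s)\|_{L^2_TL^4_x}$ and note $s\mapsto se^{-s}$ is in $L^2_s$ with $\|u(s)\|_{L^4}\le \|u\|_{L^\infty_TL^4_x}\lesssim \epsilon$, so this term is simply $\lesssim \epsilon\|v_0\|_{H^1}$ — no genuine $\Gamma$ or decay is needed because the exponential kernel $e^{-s}$ provides all the time integrability. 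With that observation the remaining bookkeeping is routine and parallels Lemma~\ref{ap} verbatim.
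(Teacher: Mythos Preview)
Your overall strategy matches the paper's, and your handling of term (i) --- absorbing the factor $t$ via the boundedness of $s\,e^{-s}$ --- is exactly the right idea. The gap is in the cubic part. When you apply $\Gamma(s)=x+is\nabla$ to the \emph{nonlocal} term $u(s)\int_0^s e^{-(s-s')}|u(s')|^2\,ds'$, the piece $is\,u(s)\nabla v(s)$ produces
\[
is\,u(s)\int_0^s e^{-(s-s')}\nabla|u(s')|^2\,ds',
\]
and here the time weight is $s$, not $s'$. Your term (ii) is written as though $is\nabla|u(s')|^2$ equals $\overline{u(s')}\,\Gamma u(s')-u(s')\overline{\Gamma u(s')}$, but that identity requires $is'\nabla$, since $\Gamma u(s')=xu(s')+is'\nabla u(s')$. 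The pseudo-Leibniz rule you quote for the local cubic $|u|^2u$ does not carry over verbatim to this time-delayed cubic, and this is precisely the structural difference between \eqref{SD} and (NLS).

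The repair is the same trick you already used for the $v_0$ term: write $s=(s-s')+s'$. The $s'$-part then legitimately gives your term (ii) with $\Gamma u(s')$. The $(s-s')$-part leaves
\[
u(s)\int_0^s (s-s')e^{-(s-s')}\nabla|u(s')|^2\,ds',
\]
where the kernel $(s-s')e^{-(s-s')}$ is bounded with bounded integral, so this is estimated exactly as in \eqref{N2} and contributes an extra term $\|\nabla u\|_{L^2_TL^4_x}\|u\|_{L^2_TL^4_x}^2$ on the right-hand side --- missing from your final inequality for $g(T)$, but harmless, since it is already $\lesssim (2\epsilon)^3$ by Proposition~\ref{ge}. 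With this additional term inserted, your closure argument goes through unchanged; this is precisely the decomposition $N_1+N_2+N_3+N_4$ the paper uses.
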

\begin{proof}
From \eqref{SD}, putting $w=\Gamma(u)$,
$$iw_t+\Delta w=\Gamma(uv)=xuv+it(u\nabla v+v\nabla u)=vw+itu\nabla v.$$
By \eqref{vintegrada},
\begin{displaymath}
	\begin{array}{llllll} 
iw_t+\Delta w&=&\displaystyle vw+ite^{-t}u\nabla v_0+\quad itu\lambda \int_0^te^{-(t-s)}\nabla|u(s)|^2ds\\
&=&\displaystyle vw+ite^{-t}u\nabla v_0+iu\lambda \int_0^t(t-s)e^{-(t-s)}\nabla|u(s)|^2ds\\
&&\displaystyle +iu\lambda \int_0^tse^{-(t-s)}\nabla|u(s)|^2ds\\
&=&\displaystyle ite^{-t}u\nabla v_0+vw+iu\lambda \int_0^t(t-s)e^{-(t-s)}\nabla|u(s)|^2ds\\
&&\displaystyle +u\lambda \int_0^te^{-(t-s)}\Big(\Gamma(|u(s)|^2)-x|u(s)|^2\Big)ds\\
&=&\displaystyle iE(-t)u\nabla v_0+vw+iu\lambda \int_0^tE(-(t-s))\nabla|u(s)|^2ds\\
&&\displaystyle +u\lambda \int_0^te^{-(t-s)}\Big(\overline{u}w-u\overline{w}\Big)ds\\
\\
&:=& N_1(x,t)+N_2(x,t)+N_3(x,t)+N_4(x,t),
\end{array}
\end{displaymath}
where we have put $E(t)=-te^{t}$ and used the identity
$$is\nabla(u\overline{u})=is\nabla u\overline{u}+isu\nabla\overline{u}=\overline{u}(\Gamma(u)-xu)+u(-\overline{\Gamma(u)}+x\overline{u})=\overline{u}w-u\overline{w}.$$
\noindent
In integral form, we obtain
\begin{equation}
\label{wiform}
w(x,t)=S(t)w_0(x)+\sum_{i=1}^4\int_0^t S(t-s)N_i(x,s)ds.
\end{equation}
For $(q,r)=(2,4), (\infty,2)$, the homogeneous Strichartz estimate yields
$$\|S(t)w_0\|_{L^q_TL^r_x}\lesssim \|w_0\|_{L^2}\lesssim \|xu_0\|_{L^2}+\|u_0\|_{H^1}.$$

Note that $E(-t)$ is bounded in $[0,+\infty[$,  $\displaystyle \lim_{t\to \infty} E(-t)=0$ and $\displaystyle \int_0^t E(-(t-s))ds=1-(t+1)e^{-t}$, which remains bounded as $t\to\infty$. Hence
$$
\left\|\int_0^t S(t-s)N_1(x,s)ds\right\|_{L^q_TL^r_x} \lesssim \|u\nabla v_0\|_{L^2_T L^{4/3}_x}\lesssim \|u\|_{L^2_TL^4_x}\|v\|_{L^\infty_T L^2_x}.
$$
Also,
$$\Big\|\int_0^t S(t-s)N_2(x,s)ds\Big\|_{L^q_TL^r_x}\lesssim\|vw\|_{L^2_TL^{4/3}_x}\lesssim \Big(\int_0^T\|w(s)\|_{L^4}^2\|v(s)\|_{L^2}^2ds\Big)^{\frac 12}$$
$$\lesssim \|v\|_{L^{\infty}_TL^2_x}|w\|_{L^2_TL^4_x}.$$
Following the steps of estimate \eqref{N2},
$$\Big\|\int_0^t S(t-s)N_3(x,s)ds\Big\|_{L^q_TL^r_x}\lesssim  \|\nabla u\|_{L^2_TL_x^4}\|u\|_{L^2_TL_x^4}^2.$$
Moreover, in view of \eqref{N3prim},
$$\Big\|\int_0^t S(t-s)N_4(x,s)ds\Big\|_{L^q_TL^r_x}\lesssim \|u\|_{L^2_TL_x^4}^2\|w\|_{L^2_TL_x^4}.$$
We finally arrive at
\begin{align*}
\|\Gamma(u)\|_{L^2_TL_x^4}+\|\Gamma(u)\|_{L^\infty_TL_x^2}\lesssim&\|xu_0\|_{L^2}+\|u_0\|_{H^1}+\|u\|_{L^2_TL^4_x}\|v\|_{L^\infty_T L^2_x}+\|v\|_{L^{\infty}_TL^2_x}\|\Gamma(u)\|_{L^2_TL^4_x}\\
+&\|\nabla u\|_{L^2_TL_x^4}\|u\|_{L^2_TL_x^4}^2+\|u\|_{L^2_TL_x^4}^2\|\Gamma(u)\|_{L^2_TL_x^4}.
\end{align*}
In view of Proposition \ref{ge}, choosing $(u_0,v_0)$ small enough such that $$\|u\|_{L^2((0,\infty), L_x^4)}^2+\|v\|_{L^{\infty}((0,\infty),L^2_x)}<\frac 12,$$ we obtain, as stated in Lemma \ref{Gamma},
$$\|\Gamma(u)\|_{L^2_TL_x^4} + \|\Gamma(u)\|_{L^\infty_TL_x^2}\lesssim\|u_0\|_{\Sigma}+\|v_0\|_{H^1},\quad T>0.$$
\end{proof}

\begin{proof}[Proof of Theorem \ref{teo:scat4} in the case $(X,Y)=(\Sigma(\er^4), H^1(\er^4))$]
The convergence \eqref{eq:scattering} follows easily from the previous lemma. The decay \eqref{co2} is obtained by Gagliardo-Nirenberg's inequality: indeed, writing $z=e^{\frac{-i|x|^2}{2t}} u$, we have $\Gamma(u)=it e^{\frac{i|x|^2}{2t}}\nabla z$. Then, by the previous lemma,
$$
\|\nabla z(t)\|_{L^2} = \frac{1}{t}\|\Gamma(u)(t)\|_{L^2}\lesssim \frac{\|u_0\|_{\Sigma}+\|v_0\|_{H^1}}{t}.
$$
Hence
$$
\|u(t)\|_{L^p}=\|z(t)\|_{L^p}\lesssim \|z(t)\|_{L^2}^{1-d\left(\frac{1}{2}-\frac{1}{p}\right)} \|\nabla z(t)\|_{L^2}^{d\left(\frac{1}{2}-\frac{1}{p}\right)} \lesssim  \frac{\|u_0\|_{\Sigma}+\|v_0\|_{H^1}}{t^{d\left(\frac{1}{2}-\frac{1}{p}\right)}}.
$$
\end{proof}
\section{Scattering theory in dimensions $d=2,3$}
\noindent
In this section, we prove Theorem \ref{scat23}. We start with the three-dimensional case.
\subsection{Proof of Theorem \ref{scat23} in dimension $d=3$}
Contrarely to the fourth dimensional case studied in the previous section, in order to obtain a scattering result in dimension $3$, we must first prove some time decay for the solutions of \eqref{SD}:
\begin{Prop}
	\label{decay}
Let $(u_0,v_0)\in \Sigma(\er^3)\times H^1(\er^3)$ and $(u(t),v(t))$ the corresponding solution to (\ref{SD}) given by Theorem \ref{bi}. Then, there exists $\delta>0$ such that, if $\|u_0\|_{\Sigma}+\|v_0\|_{L^2}<\delta$,
$$\|u\|_{L^{\infty}((0,\infty),H^1_x)}+\|u\|_{L^{\frac 83}((0,\infty),W^{1,4}_x)}<2\delta$$
and, for all $t>0$,
\begin{equation}
\label{decayR3}
\|u(t)\|_{L^4}\lesssim \frac 1{t^{\frac 34}}.
\end{equation}
\end{Prop}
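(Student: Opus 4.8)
The plan is to run a bootstrap / continuity argument on the quantity
$$
M(T) = \|u\|_{L^\infty_T H^1_x} + \|u\|_{L^{8/3}_T W^{1,4}_x} + \sup_{0<t<T} t^{3/4}\|u(t)\|_{L^4},
$$
closing it so that $M(T) \lesssim \delta$ uniformly in $T < T_*$; by the global existence part (which follows as in dimension $d=4$, using Theorem \ref{bi} and the blow-up alternative) this forces $T_* = \infty$ and gives all three conclusions at once. Note $(8/3, 4)$ is the admissible pair in $d=3$ with $L^4$ in the space slot, so Strichartz estimates are available both for the energy-type norms ($(\infty,2)$ and $(8/3,4)$) and for the decay. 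The $H^1$ and $L^{8/3}_T W^{1,4}_x$ bounds are obtained exactly as in Lemma \ref{ap} and Proposition \ref{ge}: apply the inhomogeneous Strichartz estimate to the Duhamel formula \eqref{duhamel} and its gradient \eqref{duhamelderivado}, controlling the term $e^{-s}v_0 u$ by $\|v_0\|_{H^1}\|u\|_{L^{8/3}_T W^{1,4}_x}$ (plus a $\|v_0\|_{H^1}\|u_0\|_{L^2}$ piece coming from $\nabla v_0$, using conservation of $\|u\|_{L^2}$) and the cubic term $\big(\int_0^s e^{-(s-s')}|u(s')|^2 ds'\big)u(s)$ by Hölder in time and a dual Strichartz exponent; the exponential kernel is integrable in $s'$ and costs nothing.

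The new ingredient is the pointwise decay \eqref{decayR3}. For this I would use the weighted-space / vector-field machinery: with $\Gamma = x + it\nabla$ as in the previous subsection, one has, by the same computation as in Lemma \ref{Gamma} (splitting $\int_0^t s e^{-(t-s)}\nabla|u|^2\,ds$ into a $(t-s)e^{-(t-s)}$ part and an $\Gamma$-part via $is\nabla(u\bar u)=\bar u w - u\bar w$), an integral equation for $w=\Gamma(u)$ of the form $w(t) = S(t)w_0 + \sum_{i=1}^4 \int_0^t S(t-s)N_i(s)\,ds$, with $N_1 = iE(-t)u\nabla v_0$, $N_2 = vw$, $N_3 = iu\lambda\int_0^t E(-(t-s))\nabla|u|^2$, $N_4 = u\lambda\int_0^t e^{-(t-s)}(\bar u w - u\bar w)$. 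Estimating these in the $(\infty,2)$ and $(8/3,4)$ Strichartz norms gives $\|\Gamma(u)\|_{L^\infty_T L^2_x} + \|\Gamma(u)\|_{L^{8/3}_T W^{1,4}_x} \lesssim \|u_0\|_\Sigma + \|v_0\|_{H^1}$, provided the smallness $\|u\|_{L^{8/3}_T W^{1,4}_x}^2 + \|v\|_{L^\infty_T L^2_x}$ is absorbed on the left. Then, exactly as in the proof of Theorem \ref{teo:scat4} for $\Sigma$, write $z = e^{-i|x|^2/(2t)}u$, so $\Gamma(u) = it e^{i|x|^2/(2t)}\nabla z$ and $\|\nabla z(t)\|_{L^2} = t^{-1}\|\Gamma(u)(t)\|_{L^2} \lesssim \delta/t$; Gagliardo–Nirenberg in $d=3$ gives $\|u(t)\|_{L^4} = \|z(t)\|_{L^4} \lesssim \|z(t)\|_{L^2}^{1/4}\|\nabla z(t)\|_{L^2}^{3/4} \lesssim \delta\, t^{-3/4}$, which is \eqref{decayR3}.

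The main obstacle is making the decay and the spacetime norms close \emph{simultaneously} rather than in sequence: the $\Gamma$-estimate needs the smallness of $\|u\|_{L^{8/3}_T W^{1,4}_x}$ and $\|v\|_{L^\infty_T L^2_x}$, while in the cubic nonlinearities of \eqref{duhamel} one would like to use the pointwise decay $\|u(s)\|_{L^4}\lesssim s^{-3/4}$ to gain integrability in time (e.g. $\int_0^t e^{-(t-s')}\|u(s')\|_{L^4}^2\,ds'$ is then uniformly bounded). One must check that $\int_0^\infty s^{-3/2}\,ds$-type divergences do not appear — here the exponential Debye kernel $e^{-(s-s')}$ is crucial, since it localizes $s'$ near $s$ and kills any borderline behavior, so the time integrals that arise are genuinely convergent and the bootstrap constant is controlled by $\delta$ alone. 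Care is also needed with the term $N_1$ containing $\nabla v_0$: since $v_0 \in H^1$ only (no weight), one keeps $E(-t)$ bounded with $\int_0^t E(-(t-s))\,ds = 1-(t+1)e^{-t}$ uniformly bounded, so $N_1$ contributes $\lesssim \|u\|_{L^{8/3}_T L^4_x}\|v_0\|_{H^1}$, which is fine. Assembling all pieces yields $M(T) \le C\delta + C M(T)^2 + C M(T)^3$ and the standard obstruction argument gives $M(T)\le 2\delta$ for $\delta$ small, completing the proof.
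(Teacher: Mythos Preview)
Your overall strategy is correct and is essentially the paper's: control $\Gamma(u)$ in Strichartz norms via the same integral equation \eqref{wiform}, deduce the $t^{-3/4}$ decay by Gagliardo--Nirenberg applied to $z=e^{-i|x|^2/(2t)}u$, and close a continuity argument. The paper packages this differently: rather than a single bootstrap on your $M(T)$ with the sharp exponent $3/4$, it introduces an auxiliary exponent $\alpha\in(5/8,3/4)$, proves the Strichartz bounds \emph{assuming} the weaker decay $\sup t^\alpha\|u\|_{L^4}\le\epsilon$ (Lemma~\ref{l1}), then upgrades to $t^{-3/4}$ via $\Gamma$ (Lemma~\ref{l2}), and closes by the strict gap $3/4-\alpha>0$. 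Your single-bootstrap version with $\alpha=3/4$ also closes, so this is a cosmetic difference.

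One point where you are too quick: the claim that the $L^\infty_TH^1$ and $L^{8/3}_TW^{1,4}$ bounds follow ``exactly as in Lemma~\ref{ap}'' is not right. In $d=4$ the dual Strichartz space is $L^2_TL^{4/3}_x$ and the inner convolution $\int_0^t e^{-(t-s')}\|u(s')\|_{L^4}^2\,ds'$ can simply be bounded in $L^\infty_T$ by $\|u\|_{L^2_TL^4}^2$. In $d=3$ the dual is $L^{8/5}_TL^{4/3}_x$, and after H\"older in $T$ you need this convolution in $L^4_T$, not $L^\infty_T$. This is where the paper splits $[0,t]$ into $[0,t/2]\cup[t/2,t]$ and invokes the assumed decay on the second piece (hence the threshold $\alpha>5/8$). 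Alternatively one can use Young's convolution inequality $\|e^{-\cdot}\|_{L^2}\|\,\|u\|_{L^4}^2\|_{L^{4/3}_T}\lesssim\|u\|_{L^{8/3}_TL^4}^2$, which also closes --- but this is not the $L^1$-kernel ``costs nothing'' argument you wrote. The same issue arises for the $N_2=vw$ term in the $\Gamma$ equation: you cannot absorb with $\|v\|_{L^\infty_TL^2}$ as in $d=4$; you must either expand $v$ via \eqref{vintegrada} and estimate each piece, or use the decay of $v$ coming from that of $u$. (Also, $\|\Gamma(u)\|_{L^{8/3}_TW^{1,4}_x}$ should read $L^{8/3}_TL^4_x$.) Once these adjustments are made, your argument goes through.
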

\noindent
Before proving this result, we will need the following two preliminary lemmas:
\begin{Lemma}
\label{l1}	
	 Let  $(u_0,v_0)\in H^1(\er^3)\times H^1(\er^3)$ and $(u(t),v(t))$ the corresponding solution to (\ref{SD}) given by Theorem \ref{bi}. Then, there exists $\epsilon>0$ such that, whenever $\|u_0\|_{H^1}+\|v_0\|_{H^1}<\delta$, $\delta$ small, and, for some $T>1$,
$$\vertiii{u}_T:=\sup_{0<t<T}\{t^{\alpha}\|u(t)\|_{L^4_x}\}\leq \epsilon,\quad \alpha>\frac 58,$$
then
$$\|u\|_{L^{\infty}_TH^1_x}+\|u\|_{L^{\frac 83}_TW^{1,4}_x}<2\delta.$$

\end{Lemma}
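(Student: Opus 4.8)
The plan is to run a bootstrap/continuity argument on the Duhamel formula \eqref{duhamel}, controlling the nonlinear terms by a combination of the smallness of the data in $H^1$, the smallness of the weighted quantity $\vertiii{u}_T$, and the dispersive decay it encodes. The starting point is the observation that the hypothesis $\vertiii{u}_T\le\epsilon$ gives $\|u(t)\|_{L^4}\lesssim \epsilon t^{-\alpha}$ with $\alpha>5/8$, and since $5/8$ is precisely the Strichartz exponent $3/8\cdot ?$— more concretely, $(8/3,4)$ is the admissible pair in $d=3$ with $2/q = 3(1/2-1/4)=3/8$, so $q=8/3$ — the borderline decay rate $t^{-5/8}$ is exactly what makes $\|u\|_{L^q_t L^4_x}$ with $q$ slightly below $8/3$ integrable near infinity. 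First I would record the pointwise bound on $v$ coming from \eqref{vintegrada}: since $\|u(s)\|_{L^4}^2\lesssim \epsilon^2 s^{-2\alpha}$ with $2\alpha>5/4>1$, the convolution $\int_0^t e^{-(t-s)}\|u(s)\|_{L^4}^2\,ds$ is bounded (and in fact decaying), so $\|v(t)\|_{L^2}\lesssim \|v_0\|_{L^2} + \epsilon^2$, and similarly $\|v(t)\|_{L^\infty}$-type bounds if needed; the key qualitative fact is that $v$ inherits fast decay from $u$.

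Next I would estimate each of the three nonlinear contributions in the differentiated Duhamel formula \eqref{duhamelderivado} (the $d=3$ analogue) in the norm $\|\nabla u\|_{L^\infty_T L^2_x} + \|\nabla u\|_{L^{8/3}_T L^4_x}$, using the inhomogeneous Strichartz estimate with the dual admissible pair $(8/5,4/3)$. The term with $e^{-s}(u\nabla v_0 + v_0\nabla u)$ is handled by putting $e^{-s}$ and one factor of $u$ (or $v_0$) in a time-integrable place and $\nabla v_0$ (or $\nabla u$) in $L^\infty_T L^2_x$, using $\|v_0\|_{H^1}<\delta$. The genuinely cubic terms $\int_0^s e^{-(s-s')}|u(s')|^2\nabla u(s)\,ds'$ and $\int_0^s e^{-(s-s')}\mathrm{Re}(\nabla u(s')\overline{u(s')})u(s)\,ds'$ are the ones where decay is essential: here I would use the $L^\infty$-decay of one copy of $u$, estimating e.g. $\|u(s')\|_{L^4}^2 \lesssim \epsilon^2 (s')^{-2\alpha}$ to absorb the time integral, leaving $\|\nabla u\|_{L^{8/3}_T L^4_x}$ (which is the quantity we are bootstrapping) or $\|\nabla u\|_{L^\infty_T L^2_x}$ times Hölder-in-time factors that are finite because $2\alpha>5/4$. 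This should close to an inequality of the schematic form
$$
\|\nabla u\|_{L^\infty_T L^2_x} + \|\nabla u\|_{L^{8/3}_T L^4_x} \lesssim \|u_0\|_{H^1} + \delta\big(\|\nabla u\|_{L^\infty_T L^2_x} + \|\nabla u\|_{L^{8/3}_T L^4_x}\big) + \epsilon^2\big(\|\nabla u\|_{L^\infty_T L^2_x} + \|\nabla u\|_{L^{8/3}_T L^4_x}\big),
$$
together with the $L^2$-conservation $\|u(t)\|_{L^2}=\|u_0\|_{L^2}$. Choosing $\delta$ and $\epsilon$ small enough so that the coefficient of the quadratic/coupling terms is $\le 1/2$, one absorbs and concludes $\|u\|_{L^\infty_T H^1_x} + \|u\|_{L^{8/3}_T W^{1,4}_x}<2\delta$, uniformly in $T$ (the constants being $T$-independent is the whole point, and it works precisely because all the time integrals converge at infinity).

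The step I expect to be the main obstacle is the careful bookkeeping of the mixed space-time Hölder exponents in the cubic terms — in particular verifying that after extracting the decay $\epsilon^2 (s')^{-2\alpha}$ from two of the three factors, the remaining factor can genuinely be placed in $L^{8/3}_t L^4_x$ (or $L^\infty_t L^2_x$) with the leftover time weight integrable, which is where the strict inequality $\alpha>5/8$ rather than $\alpha\ge 5/8$ gets used; borderline integrability at $\alpha=5/8$ would fail. A secondary subtlety is the term $N_2$-type piece $\int_0^s e^{-(s-s')}\mathrm{Re}(\nabla u(s')\overline{u(s')})u(s)\,ds'$: here the derivative sits on the \emph{integrated} variable $s'$ rather than on $u(s)$, so one must be slightly more careful, estimating $\|\nabla u(s')\|_{L^4}\|u(s')\|_{L^4}$ and using both the decay of one copy and the Strichartz norm of $\nabla u$; the $e^{-(s-s')}$ kernel keeps the $s'$-integral under control. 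Once these estimates are in place, the conclusion follows by the standard continuity (bootstrap) argument in $T$, exactly as in the proof of Proposition \ref{ge}.
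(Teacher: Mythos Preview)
Your overall strategy---Duhamel plus Strichartz with the admissible pair $(8/3,4)$ and dual $(8/5,4/3)$, closing by a continuity argument---matches the paper's. But there is a genuine gap in your treatment of short times. The hypothesis $\vertiii{u}_T\le\epsilon$ gives $\|u(s)\|_{L^4}\le\epsilon s^{-\alpha}$, which \emph{blows up} as $s\to 0$; since $2\alpha>5/4>1$, the function $s^{-2\alpha}$ is not locally integrable near the origin, so your claim that $\int_0^t e^{-(t-s)}\|u(s)\|_{L^4}^2\,ds$ is bounded ``because $2\alpha>5/4$'' is false as written. The weighted norm $\vertiii{u}_T$ carries no useful information near $s=0$, and you cannot extract the factor $\epsilon^2$ there.

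The paper handles this by splitting the outer time interval into $[0,1]$ and $[1,T]$: on $[0,1]$ one uses the Sobolev embedding $\|u\|_{L^\infty_T L^4_x}\lesssim\|u\|_{L^\infty_T H^1_x}$ (which is part of the bootstrap quantity $h$), while on $[1,T]$ one further splits the inner $s'$-integral into $[0,t/2]$ (controlled by the exponential kernel $e^{-(t-s')}$ and $\|u\|_{L^\infty_T H^1_x}$) and $[t/2,t]$ (where the decay $\|u(s')\|_{L^4}\lesssim\vertiii{u}_T\,(s')^{-\alpha}$ is now harmless and produces the $L^4_T$-bound $\|t^{1-2\alpha}\|_{L^4(1,T)}$, finite precisely when $\alpha>5/8$). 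This short-time contribution introduces terms of the form $\|u\|_{L^{8/3}_T L^4_x}\cdot\|u\|_{L^\infty_T H^1_x}^2$ in the estimate, so the closing inequality is not the linear absorption $h\lesssim\delta+(\delta+\epsilon^2)h$ you wrote but rather the cubic form
\[
h(T)\lesssim \delta + h(T)\bigl(\delta+\epsilon^2+h(T)^2\bigr),
\]
i.e.\ $h\lesssim\delta+h^3$ after absorbing the small linear pieces, and one concludes by an obstruction argument as in Proposition~\ref{ge}. Your identification of the $N_2$-type term (derivative on the integrated variable) as the delicate one is correct; the paper treats it with an analogous $[0,1]\cup[1,T]$ and $[0,t/2]\cup[t/2,t]$ splitting.
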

\begin{proof}
Using the Duhamel formula \eqref{duhamel}, we begin by estimating $u$  in the Strichartz norms $L^{q}_TL^r_x$, $(q,r)=(\frac 83,4),(\infty,2)$:

$$
\Big\|\int_0^tS(t-s)e^{-s}v_0u(s)ds\Big\|_{L_T^{ q}L_x^{r}}\lesssim \|e^{-t}v_0u(t)\|_{L_T^{\frac 85}L^{4/3}_x}\lesssim \| \|e^{-t}u\|_{L^4}\|v_0\|_{L^2} \|_{L_T^{\frac 85}}
$$
and
\begin{equation}
\label{Stru1}
\Big\|\int_0^tS(t-s)e^{-s}v_0u(s)ds\Big\|_{L_T^{q}L_x^{r}}\lesssim \|v_0\|_{L^2}\|u\|_{L_T^{\frac 83}L_x^4},
\end{equation}
where we have used the H\"older inequality. Also,
\begin{equation}
\label{N3primbis}
\begin{array}{llll}
\displaystyle\Big\|\int_0^tS(t-s)\int_0^se^{-(s-s')}|u(s')|^2u(s)ds'ds\Big\|_{L_T^{q}L_x^r}\\
\\
\lesssim\displaystyle\Big\|\int_0^te^{-(t-s')}\|u(s')\|_{L^4}^2\|u(t)\|_{L^4}ds'\Big\|_{L_T^{\frac 85}L_x^4}\\
\\
\displaystyle \lesssim \|u\|_{L_T^{\frac 83}L_x^4}\Big\|\int_0^te^{-(t-s')}\|u(s')\|_{L^4}^2ds' \Big\|_{L^4_T}
\end{array}
\end{equation}
We now split the time interval $[0;T]$:
$$\Big\|\int_0^te^{-(t-s')}\|u(s')\|_{L^4}^2ds' \Big\|_{L^4(0,1)}\lesssim \|u\|_{L_T^{\infty}L_x^4}^2\lesssim \|u\|_{L_T^{\infty}H^1_x}^2$$
and
$$\Big\|\int_0^te^{-(t-s')}\|u(s')\|_{L^4}^2ds' \Big\|_{L^4(1,T)}\leq \Big\|\int_0^{t/2}e^{-(t-s')}\|u(s')\|_{L^4}^2ds' \Big\|_{L^4(1,T)}$$
$$+\Big\|\int_{t/2}^te^{-(t-s')}\|u(s')\|_{L^4}^2ds' \Big\|_{L^4(1,T)}$$
$$\leq \|u\|_{L_T^{\infty}H_x^1}+\vertiii{u}_T^2\Big\|\int_{t/2}^t\frac 1{{s'}^{2\alpha}}ds' \Big\|_{L^4(1,T)}$$
$$\leq  \|u\|_{L_T^{\infty}H^1_x}+\vertiii{u}_T^2\Big(\int_1^T\frac 1{{t}^{4(2\alpha-1)}}dt \Big)^4 $$
and this last integral is bounded for $\alpha>\frac 58$.
Hence,
\begin{equation}
\Big\|\int_0^tS(t-s)\int_0^se^{-(s-s')}|u(s')|^2u(s)ds'ds\Big\|_{L_T^{q}L_x^r}\lesssim \|u\|_{L_T^{\frac 83}L_x^4}(\|u\|_{L_T^{\infty}H^1_x}^2+\vertiii{u}_T^2).
\end{equation}
Combining this last inequality with \eqref{Stru1}, we obtain
\begin{equation}
\label{parte1}
\|u\|_{L^{q}_TL^r_x}\lesssim \|u_0\|_{L^2}+\|u\|_{L_T^{\frac 83}L_x^4}(\|v_0\|_{L^2}+\|u\|_{L_T^{\infty}H^1_x}^2+\vertiii{u}_T^2)
\end{equation}
Next, in view of \eqref{duhamelderivado}, we estimate $\|\nabla u\|_{L^{q}_TL^r_x}$. As in \eqref{Stru1},
\begin{equation}\Big\|\int_0^t S(t-s)e^{-s}(u(s)\nabla v_0+v_0 \nabla u(s))ds\Big\|_{L^{q}_TL^r_x}\lesssim \|\nabla v_0\|_{L^2}\|u\|_{L_T^{\frac 83}L_x^4}+\|v_0\|_{L^2}\|\nabla u\|_{L_T^{\frac 83}L_x^4}.
\end{equation}
Also, as in \eqref{N3primbis}, 
 \begin{equation}
\Big\|\int_0^tS(t-s)\int_0^se^{-(s-s')}|u(s')|^2\nabla u(s)ds'ds\Big\|_{L_T^{q}L_r^4}\lesssim \|\nabla u\|_{L_T^{\frac 83}L_x^4}(\|u\|_{L_T^{\infty}H^1_x}^2+\vertiii{u}_T^2).
\end{equation}
Furthermore, by H\"older,

$$\Big\|\int_0^tS(t-s)\int_0^se^{-(s-s')}Re(\nabla u(s')\overline{u(s')})u(s)ds'\Big\|_{L_T^{q}L_x^r}$$
$$\lesssim \Big\|\|u(t)\|_{L^4}\int_0^te^{-(t-s')}\|\nabla u(s')\|_{L^4}\|u(s')\|_{L^4}ds'\Big\|_{L_T^{\frac 85}}$$
$$\lesssim \|\nabla u\|_{L_T^{\frac 83}L_x^4} \Big\|\|u(t)\|_{L^4}\Big(\int_0^te^{-\frac 85(t-s')}\|u(s')\|_{L^4}^{\frac 85}ds'\Big)^{\frac 58}\Big\|_{L_T^{\frac 85}}.$$
Now,
$$\Big\|\|u(t)\|_{L^4}\Big(\int_0^te^{-\frac 85(t-s')}\|u(s')\|_{L^4}^{\frac 85}ds'\Big)^{\frac 58}\Big\|_{L^{\frac 85}(0,1)}$$
$$\lesssim \|u\|_{L_T^{\frac 83}L_x^4} \Big\|\Big(\int_0^te^{-\frac 85(t-s')}\|u(s')\|_{L^4}^{\frac 85}ds'\Big)^{\frac 58}\Big\|_{L^{4}(0,1)}\lesssim \|u\|_{L_T^{\frac 83}L_x^4}\|u\|_{L_T^{\infty}H_x^1}$$
and, similarly
$$\Big\|\|u(t)\|_{L^4}\Big(\int_0^{\frac t2}e^{-\frac 85(t-s')}\|u(s')\|_{L^4}^{\frac 85}ds'\Big)^{\frac 58}\Big\|_{L^{\frac 85}(0,1)}\lesssim \|u\|_{L_T^{\frac 83}L_x^4}\|u\|_{L_T^{\infty}H_x^1}.$$
Finally, using the time decay, 
$$\Big\|\|u(t)\|_{L^4}\Big(\int_{\frac t2}^te^{-\frac 85(t-s')}\|u(s')\|_{L^4}^{\frac 85}ds'\Big)^{\frac 58}\Big\|_{L^{\frac 85}(1,T)}
\lesssim \vertiii{u}_T^2\int_1^T\frac 1{s^{\frac {8\alpha}5}}\frac 1{s^{\frac{8\alpha}5-1}}ds\lesssim \vertiii{u}_T^2$$
for $\alpha>\frac 58$.
Hence 
\begin{multline}
\label{parte2}
\|\nabla u\|_{L_T^{q}L_x^r}\lesssim \|\nabla u_0\|_{L^2}+ \|\nabla v_0\|_{L^2}\|u\|_{L_T^{\frac 83}L_x^4}+\|v_0\|_{L^2}\|\nabla u\|_{L_T^{\frac 83}L_x^4}+\\
 \|\nabla u\|_{L_T^{\frac 83}L_x^4}(\|u\|_{L_T^{\infty}H^1}^2+\vertiii{u}_T^2)+\|\nabla u\|_{L_T^{\frac 83}L_x^4}(\|u\|_{L^{\infty}_xH_x^1}^2+\vertiii{u}_T^2).
\end{multline}
Combining this inequality with \eqref{parte1} and putting $h(T)=\|u\|_{L^{\infty}((0,T),H^1_x)}+\|u\|_{L^{\frac 83}((0,T),W^{1,4}_x)}$ we obtain, for small $\delta$ and $\epsilon$,
$$h(t)\lesssim \delta+h(t)^3,$$
and the Lemma is proved by using once again an obstruction argument.
\end{proof}
\bigskip
\begin{Lemma}
\label{l2}
In the conditions of Lemma \ref{l1}, if additionally $xu_0\in L^2(\er^3)$ and $\|u_0\|_{\Sigma}+\|v_0\|_{L^2}<\delta$, then, for all $t\leq T$,
$$\|u(t)\|_{L^4}\lesssim \frac{\delta}{t^{\frac 34}}.$$
\end{Lemma}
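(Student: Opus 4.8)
The plan is to use the vector field operator $\Gamma = x + it\nabla$ together with the standard pseudoconformal identity relating $\Gamma$ to the decay of $u$ in $L^p$ norms, exactly as in the four-dimensional case (the proof of Theorem \ref{teo:scat4} for $(X,Y)=(\Sigma,H^1)$), but now in dimension $d=3$ and with the time-weighted bookkeeping of Lemma \ref{l1} in play. First I would write $w = \Gamma(u)$ and derive the integral equation for $w$ by commuting $\Gamma$ through $i\partial_t + \tfrac12\Delta$ and using \eqref{vintegrada}; this is the three-dimensional analogue of equation \eqref{wiform}, and it splits the nonlinearity into the four pieces $N_1$ (the term $e^{-t}u\nabla v_0$ with a factor $t$, i.e. $E(-t)$), $N_2 = vw$, $N_3$ (the convolution of $\nabla|u|^2$ against $E(-(t-s))$), and $N_4$ (the term $e^{-(t-s)}(\overline u w - u\overline w)$ coming from $is\nabla|u|^2 = \overline u w - u\overline w$). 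The identities $E(-t)=-te^{t}$ bounded on $[0,\infty)$, $\lim_{t\to\infty}E(-t)=0$, and $\int_0^t E(-(t-s))\,ds$ bounded are all available verbatim.

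The key step is then a Strichartz estimate for $w$ in the pair $(q,r)=(\tfrac83,4)$ and $(\infty,2)$, dual exponent $(\tfrac85,\tfrac43)$, mirroring the structure of the proof of Lemma \ref{l1}. For $N_1$ and $N_2$ the estimates are immediate: $\|N_1\|_{L^{8/5}_TL^{4/3}_x}\lesssim \|u\|_{L^{8/3}_TL^4_x}\|v_0\|_{L^2}$ (using boundedness of $E(-t)$) and $\|N_2\|_{L^{8/5}_TL^{4/3}_x}\lesssim \|v\|_{L^\infty_TL^2_x}\|w\|_{L^{8/3}_TL^4_x}$ by Hölder. For $N_3$ and $N_4$ one repeats the time-splitting $[0,1]\cup\{[0,t/2]\cup[t/2,t]\}$ used in \eqref{N3primbis}: on $[0,1]$ bound by $\|u\|_{L^\infty_TH^1_x}$; on $[0,t/2]$ the exponential gain $e^{-t/2}$ absorbs everything; on $[t/2,t]$ use the decay hypothesis $\vertiii{u}_T$ and the convergence of $\int_1^\infty s^{-(8\alpha/5)}s^{-(8\alpha/5-1)}\,ds$ for $\alpha>\tfrac58$ — in $N_4$ one of the two factors of $u$ is actually a factor of $w$, so the outcome carries $\|w\|_{L^{8/3}_TL^4_x}$ linearly. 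Assembling, $\|w\|_{L^{8/3}_TL^4_x}+\|w\|_{L^\infty_TL^2_x}\lesssim \|u_0\|_\Sigma + \|v_0\|_{H^1} + (\text{small})\,\|w\|_{L^{8/3}_TL^4_x}$, where ``small'' is controlled by Lemma \ref{l1} and Proposition \ref{decay}'s smallness; absorbing it gives $\|\Gamma(u)\|_{L^\infty_TL^2_x}\lesssim \delta$ uniformly in $T$. Finally, writing $z = e^{-i|x|^2/(2t)}u$ so that $\Gamma(u) = it e^{i|x|^2/(2t)}\nabla z$, one gets $\|\nabla z(t)\|_{L^2}\lesssim \delta/t$, and Gagliardo--Nirenberg in $\er^3$ yields $\|u(t)\|_{L^4} = \|z(t)\|_{L^4}\lesssim \|z\|_{L^2}^{1/4}\|\nabla z\|_{L^2}^{3/4}\lesssim \delta\, t^{-3/4}$, which is exactly the claimed bound.

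The main obstacle I anticipate is not any single estimate but the consistency of the bootstrap: Lemma \ref{l2} is invoked \emph{inside} the proof of Proposition \ref{decay}, and its hypotheses are precisely those of Lemma \ref{l1} (so $\|u\|_{L^\infty_TH^1_x}+\|u\|_{L^{8/3}_TW^{1,4}_x}<2\delta$ is already available, as is a bound on $\vertiii{u}_T$ for the $\alpha$ at hand), but one must be careful that the decay rate $\alpha = \tfrac34$ we are about to prove is strictly greater than the threshold $\tfrac58$ needed for the time integrals to converge — which it is, so the argument closes. A secondary delicate point is the treatment of $N_4$: since it is genuinely linear in $w$ rather than quadratic, its contribution must be absorbed into the left-hand side, so one needs the smallness of $\|u\|_{L^2_TL^4_x}$-type quantities (or rather their time-weighted substitutes) to be quantitatively below the implicit Strichartz constant; this is guaranteed by choosing $\delta$ small after Lemma \ref{l1}. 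Everything else is a routine adaptation of the $d=4$ computation and of Lemma \ref{l1}'s splitting.
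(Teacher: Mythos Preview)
Your proposal is correct and is exactly the strategy the paper sketches: reduce the $L^4$ decay to a bound on $\|\Gamma(u)\|_{L^\infty_TL^2_x}$ via Gagliardo--Nirenberg (as in the $d=4$ case of Theorem~\ref{teo:scat4}), and control $w=\Gamma(u)$ through the integral equation \eqref{wiform} using the Strichartz pairs $(q,r)=(\tfrac83,4),(\infty,2)$ together with the time-splitting of Lemma~\ref{l1}. One small bookkeeping fix: your H\"older for $N_2$ should read $\|v\|_{L^4_TL^2_x}\|w\|_{L^{8/3}_TL^4_x}$ rather than $\|v\|_{L^\infty_TL^2_x}\|w\|_{L^{8/3}_TL^4_x}$ (the time exponents must satisfy $\tfrac58=\tfrac14+\tfrac38$), but $\|v\|_{L^4_TL^2_x}$ is precisely the quantity already bounded right after \eqref{N3primbis}, so the argument closes unchanged.
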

\begin{proof}
	Arguing as in the proof of Theorem \ref{teo:scat4} in the case $(X,Y)=(\Sigma(\er^4), H^1(\er^4))$, we only need to show that $\|\Gamma(u)\|_{L^{\infty}_TL^2_x}<\delta$, which can be obtained from an \emph {a priori} estimate as in Lemma \ref{Gamma}. This can be easily obtained by using similar computations as in the proof of Lemma \ref{l1} to estimate the $L_T^{\infty}L^2_x$ norm of the right-hand-side of \eqref{wiform}.
\end{proof}

\bigskip
\noindent We are now able to show Theorem \ref{scat23} in the case $d=3$:
\begin{proof}[Proof of Theorem \ref{decay}]
	Let $\delta>0$ and $(u_0,v_0)$ such that $\|u_0\|_{\Sigma}+\|v_0\|_{H^1}<\delta$. Define $T^*=\sup\{T\,:\,\vertiii{u}_T<\epsilon\}$, for a fixed $\epsilon>0$ small enough as in Lemma \ref{l1}. Our goal is to  establish that $T^*=+\infty$ provided that $\delta>0$ is small enough. By contradiction, we  assume that $T^*<+\infty$.

\smallskip
\noindent
From the local well-posedness result, we have, for $\delta$ small enough,
$$\vertiii{u}_{T=1}\lesssim \|u\|_{L^{\infty}((0,1),L_x^4)} \lesssim \|u\|_{L^{\infty}((0,1),H_x^1)}\lesssim \|u_0\|_{H^1}+ \|v_0\|_{H^1}\lesssim \delta$$
which implies that $\vertiii{u}_{T=1}<\epsilon$, hence $T^*>1$.

	\medskip
	\noindent
	On the other hand, from Lemma \ref{l2}, ${T^*}^{\frac 34}\|u(T^{*})\|_{L^4}\lesssim\delta.$ Also, by definition of $T^*$ and by continuity of $t\to u(\cdot,t)\in L^4$, 
	${T^*}^{\alpha}\|u(T^*)\|_{L^4}=\epsilon.$
		Hence, $\delta\gtrsim{T^*}^{\frac 34-\alpha}\gtrsim 1$ for $\frac 58<\alpha<\frac 34$, which is absurd for small $\delta$, and the proof of Theorem \ref{scat23} for $d=3$ follows.
\end{proof}
\subsection{Proof of Theorem \ref{scat23} in dimension $d=2$}
The proof of the scattering result in dimension $d=2$ follows the lines of the three dimensional case. However, the optimal linear asymptotic decay, for $u_0\in\Sigma(\er^2)$, reads
$$\|S(t)u_0\|_{L^4}\sim \frac 1 {t^{\frac 12}},$$
hence we must prove a version of Lemma \ref{l1} valid for $\alpha=\frac 12$:
\begin{Lemma}
	\label{l1bis}	
	Let  $(u_0,v_0)\in H^1(\er^2)\times H^1(\er^2)$ and $(u(t),v(t))$ the corresponding solution to (\ref{SD}) given by Theorem \ref{bi}.  Then, there exists $\epsilon>0$ such that, whenever $\|u_0\|_{H^1}+\|v_0\|_{H^1}<\delta$, $\delta$ small, and, for some $T>1$,
	$$\vertiii{u}_T:=\sup_{0<t<T}\{t^{\alpha}\|u(t)\|_{L^4_x}\}\leq \epsilon,\quad \alpha>\frac 38,$$
	then
	$$\|u\|_{L^{\infty}_TH^1_x}+\|u\|_{L^{4}_TW^{1,4}_x}<2\delta.$$
\end{Lemma}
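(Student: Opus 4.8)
The plan is to mirror the argument of Lemma \ref{l1}, replacing the admissible pair $(\frac 83,4)$ used in dimension $d=3$ by the $L^2$-admissible pair $(q,r)=(4,4)$ in dimension $d=2$ (note $\frac 2q = 2(\frac 12-\frac 14)=\frac 12$ for $r=4$, so $(4,4)$ is admissible in $d=2$), and running the same Duhamel/Strichartz estimates. First I would write the Duhamel formula \eqref{duhamel} for $u$ and \eqref{duhamelderivado} for $\nabla u$, and estimate the four types of terms in the Strichartz norms $L^q_T L^r_x$ for $(q,r)=(4,4),(\infty,2)$. The homogeneous term and the $v_0$-linear term are handled exactly as in \eqref{Stru1} via the inhomogeneous Strichartz estimate with dual exponent $(q',r')=(\frac 43,\frac 43)$ and H\"older in time; these give contributions $\lesssim \|u_0\|_{H^1}$ and $\lesssim \|v_0\|_{H^1}\|u\|_{L^4_T W^{1,4}_x}$ respectively.

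The cubic terms are where the value of $\alpha$ enters. For the term $\int_0^t S(t-s)\int_0^s e^{-(s-s')}|u(s')|^2 u(s)\,ds'\,ds$, following \eqref{N3primbis} one bounds it by $\|u\|_{L^4_T L^4_x}\big\|\int_0^t e^{-(t-s')}\|u(s')\|_{L^4}^2\,ds'\big\|_{L^q_T}$ with $q=4$; splitting $[0,T]=[0,1]\cup[1,T]$, the piece on $[0,1]$ is controlled by $\|u\|_{L^\infty_T H^1_x}^2$, while on $[1,T]$ one splits the inner integral at $s'=t/2$ and uses the time-decay hypothesis to get $\vertiii{u}_T^2\big\|\int_{t/2}^t s'^{-2\alpha}\,ds'\big\|_{L^4(1,T)}$; the integral $\int_1^T t^{-4(2\alpha-1)}\,dt$ is finite precisely when $4(2\alpha-1)>1$, i.e. $\alpha>\frac 38$, which is the stated threshold. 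The gradient analogue and the term involving $\mathrm{Re}(\nabla u(s')\overline{u(s')})u(s)$ are treated by the same splitting, using H\"older in the inner integral as in Lemma \ref{l1}; the convergence conditions there also reduce to $\alpha>\frac 38$ in dimension $2$. Collecting everything and setting $h(T)=\|u\|_{L^\infty_T H^1_x}+\|u\|_{L^4_T W^{1,4}_x}$ yields $h(T)\lesssim \delta + h(T)^3$ for $\delta,\epsilon$ small, and a continuity/obstruction argument gives $h(T)<2\delta$.

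The main obstacle — really a bookkeeping point rather than a deep one — is checking that every time integral that appeared with a comfortable margin in the $d=3$ computation (where $\alpha>\frac 58$) still converges at the borderline exponent $\alpha=\frac 12>\frac 38$ in $d=2$; one must verify in each of the split pieces (especially in the H\"older step for the $\mathrm{Re}(\nabla u\,\overline u)u$ term, where the weights $s^{-8\alpha/5}$ from the $d=3$ proof are replaced by the $d=2$ analogues coming from the exponents $q=4$, $q'=\frac 43$) that the resulting power of $s$ is integrable at infinity exactly under $\alpha>\frac 38$. Once that arithmetic is done, the obstruction argument is identical to the one closing Lemma \ref{l1}.
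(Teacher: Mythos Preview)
Your arithmetic at the crucial step is wrong, and this is not a minor slip: it is precisely the obstruction the paper flags. You write that $\int_1^T t^{-4(2\alpha-1)}\,dt<\infty$ when $4(2\alpha-1)>1$, ``i.e.\ $\alpha>\tfrac38$''; but $4(2\alpha-1)>1$ gives $\alpha>\tfrac58$. More to the point, your H\"older split is inconsistent: with the $d=2$ dual pair $(q',r')=(\tfrac43,\tfrac43)$, pulling $\|u(t)\|_{L^4}$ out in $L^4_T$ forces the inner time-integral into $L^2_T$, not $L^4_T$. If you then bound $e^{-(t-s')}\le 1$ on $[t/2,t]$ as in the $d=3$ proof, you obtain $\int_{t/2}^t s'^{-2\alpha}\,ds'\sim t^{1-2\alpha}$ and need $\int_1^T t^{2(1-2\alpha)}\,dt<\infty$, i.e.\ $\alpha>\tfrac34$. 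Either way the direct transplant of Lemma~\ref{l1} does not reach $\alpha>\tfrac38$, and since the two-dimensional linear decay is only $t^{-1/2}$, this kills the bootstrap.

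The paper anticipates exactly this: right after stating Lemma~\ref{l1bis} it says that the $[0,t/2]\cup[t/2,t]$ splitting ``will no longer provide, in this dimension, adequate estimates'' for the cubic integrals. Instead it treats $\int_0^s e^{-(s-s')}\|u(s')\|_{L^4}^2\,ds'$ as the convolution $e^{-|\cdot|}*\|u(\cdot)\|_{L^4}^2$, bounds $\|u(s')\|_{L^4}^2\le\vertiii{u}_T^2\,|s'|^{-2\alpha}$, and applies a Young/Hardy--Littlewood--Sobolev inequality to control $\big\|e^{-|\cdot|}*|\cdot|^{-2\alpha}\big\|_{L^2_T}$ for every $\alpha>\tfrac14$. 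The cross term involving $\mathrm{Re}(\nabla u\,\overline u)u$ is handled by the same convolution device combined with a further H\"older step, and it is there that the threshold $\alpha>\tfrac38$ actually appears. (One can alternatively rescue the splitting by retaining the exponential and using $\int_{t/2}^t e^{-(t-s')}\,ds'\le 1$, which yields $t^{-2\alpha}$ rather than $t^{1-2\alpha}$; but that is not what you wrote, and it is not the route the paper takes.)
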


\noindent
Unfortunately, the splitting of the time interval $[0,t]$ into $[0,t/2]$ and $[t/2;t]$ will no longer provide, in this dimension, adequate estimates for the $L_T^qL_x^r$ Strichartz norms $(q,r)=(4,4),(\infty,2)$ of the cubic integrals
$$\nabla^{(i)}\int_0^tS(t-s)\int_0^se^{-(s-s')}|u(s')|^2u(s)ds'ds,\quad i=0,1.$$
Hence, using a different technique, we will show the following Lemma, from which Lemma \ref{l1bis} and Theorem \ref{scat23} follow:   
\begin{Lemma} Let $\alpha>\frac 38$ and $(q,r)=(4,4),(\infty,2)$. Then
$$\Big\|\int_0^tS(t-s)\int_0^se^{-(s-s')}|u(s')|^2u(s)ds'ds  \Big\|_{L_T^qL_x^r}\lesssim \|u\|_{L_T^4L_x^4}\vertiii{u}^2_T$$
and 
$$\Big\|\nabla\int_0^tS(t-s)\int_0^se^{-(s-s')}|u(s')|^2u(s)ds'ds  \Big\|_{L_T^qL_x^r}$$
$$\lesssim \|u\|_{L_T^4W^{1,4}_x}\|u\|_{L_T^{\infty}H^1_x}^2+\|\nabla u\|_{L_T^4L_x^4}\vertiii{u}_T^2.$$
\end{Lemma}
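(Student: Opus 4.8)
The plan is to estimate the double time integral $F(t):=\int_0^t S(t-s)\int_0^s e^{-(s-s')}|u(s')|^2u(s)\,ds'\,ds$ in the Strichartz norms $L^q_TL^r_x$ with $(q,r)=(4,4),(\infty,2)$, using the inhomogeneous Strichartz estimate with the dual admissible exponent $(q',r')=(4/3,4/3)$. Thus the task reduces to bounding $\big\|\int_0^s e^{-(s-s')}|u(s')|^2u(s)\,ds'\big\|_{L^{4/3}_TL^{4/3}_x}$. Applying Hölder in $x$ (writing $\|w^2 u\|_{L^{4/3}_x}\le \|w\|_{L^4_x}^2\|u\|_{L^4_x}$) and bounding $\|u(s)\|_{L^4}$ by pulling out $\|u\|_{L^4_TL^4_x}$ after one more Hölder in time, the key quantity becomes the convolution-type integral $G(s):=\int_0^s e^{-(s-s')}\|u(s')\|_{L^4}^2\,ds'$, whose $L^{4}_T$ (or suitably chosen $L^\sigma_T$) norm must be controlled by $\vertiii{u}_T^2$.

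The main obstacle — the reason the authors flag that the naive $[0,t/2]\cup[t/2,t]$ splitting fails in $d=2$ — is that for $\alpha$ just above $3/8$ the weight $\|u(s')\|_{L^4}^2\lesssim \vertiii{u}_T^2\, {s'}^{-2\alpha}$ is \emph{not} integrable near infinity (since $2\alpha$ can be less than $1$), so one cannot simply estimate $G(s)\lesssim \vertiii{u}_T^2\int_0^\infty {s'}^{-2\alpha}ds'$. Instead I would exploit the exponential kernel directly: $G(s)=\int_0^s e^{-(s-s')}\|u(s')\|_{L^4}^2ds'$ is a convolution $(e^{-\cdot}\mathds 1_{t>0})*(\|u(\cdot)\|_{L^4}^2)$, and by Young's inequality $\|G\|_{L^\sigma_T}\le \|e^{-\cdot}\|_{L^1}\,\big\|\,\|u(\cdot)\|_{L^4}^2\,\big\|_{L^\sigma_T}=\big\|\,\|u\|_{L^4_x}^2\big\|_{L^\sigma_T}$ for any $\sigma\ge 1$. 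Taking $\sigma$ so that the Strichartz bookkeeping closes — i.e. matching $\|u\|_{L^4_x}^2$ against $\vertiii{u}_T^2\,t^{-2\alpha}$, which lies in $L^\sigma(1,T)$ uniformly in $T$ as soon as $2\alpha\sigma>1$ — and handling the bounded interval $(0,1)$ separately via $\|u\|_{L^\infty_T H^1_x}$, one gets $\|G\|_{L^\sigma_T}\lesssim \vertiii{u}_T^2$ provided $\alpha>1/(2\sigma)$. The condition $\alpha>3/8$ should correspond exactly to the admissible choice of $\sigma$ dictated by $(q,r)=(4,4)$ (so $\sigma=4/3$ on the relevant factor gives the threshold $3/8$); I would verify the exponent arithmetic carefully so that the loss from the two Hölder steps in time is compensated.

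For the second, differentiated estimate, I would apply $\nabla$ to $F(t)$: it produces two types of terms, one where the derivative hits $u(s)$ (outer factor) and one where it hits $|u(s')|^2$ (inner factor, giving $2\mathrm{Re}(\nabla u(s')\overline{u(s')})$). The first type is estimated exactly as above with $u(s)$ replaced by $\nabla u(s)$, yielding the $\|\nabla u\|_{L^4_TL^4_x}\vertiii{u}_T^2$ contribution. For the second type, one factor of $u$ inside must be taken in the decaying norm and the remaining $\nabla u$ in a Strichartz norm; here I would put $\|\nabla u(s')\|_{L^4}\|u(s')\|_{L^4}$ inside the convolution, pull out $\|\nabla u\|_{L^4_TL^4_x}$ or split off one $\|u\|_{L^\infty_TH^1_x}$ for the $(0,1)$ piece, and again invoke Young's inequality for the exponential kernel — the surviving weight is then ${s'}^{-\alpha}$ paired with $\|\nabla u(s')\|_{L^4}$, which by Cauchy–Schwarz in time against $\|\nabla u\|_{L^4_TL^4_x}$ (and $2\alpha>3/4$, i.e. $\alpha>3/8$) closes to give the stated $\|u\|_{L^4_TW^{1,4}_x}\|u\|_{L^\infty_TH^1_x}^2+\|\nabla u\|_{L^4_TL^4_x}\vertiii{u}_T^2$ bound. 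The routine Strichartz/Hölder manipulations I would not spell out in detail; the crux is replacing the failed dyadic-time split by Young's inequality in the time convolution with the integrable exponential kernel, which decouples the decay bookkeeping from integrability at infinity.
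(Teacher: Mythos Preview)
Your approach is essentially the same as the paper's: reduce via Strichartz to the $L^{4/3}_TL^{4/3}_x$ norm, apply H\"older in $x$, and then exploit the exponential kernel as a time convolution. Two small corrections are worth flagging.

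First, for the undifferentiated estimate the paper bypasses any $(0,1)$ split by applying a weak Young/Hardy--Littlewood--Sobolev inequality directly: after pulling out $\|u\|_{L^4_TL^4_x}$, one needs $\big\|e^{-|\cdot|}*|\cdot|^{-2\alpha}\big\|_{L^2}$ finite, and HLS gives this for all $\alpha>1/4$. Your strong-Young version with the $L^1$ kernel also works, but on $(0,1)$ you must \emph{not} invoke $\|u\|_{L^\infty_TH^1_x}$, since that norm does not appear on the right-hand side of the first inequality. Instead use $\|u(s')\|_{L^4}^2\le \vertiii{u}_T^2\,s'^{-2\alpha}\in L^1(0,1)$ (valid for $\alpha<1/2$) together with Young against the $L^2$ norm of the exponential. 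Note also that the threshold for this term is $\alpha>1/4$, not $3/8$: after H\"older against $\|u\|_{L^4_TL^4_x}$ the relevant exponent on $G$ is $\sigma=2$, not $4/3$.

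Second, the $\alpha>3/8$ threshold arises only in the differentiated cross term where $\nabla$ hits $u(s')$. There the paper first extracts $\|\nabla u\|_{L^4_TL^4_x}$ via H\"older in the $s'$-integral (conjugate exponent $4/3$), then on $(1,T)$ bounds the two surviving $u$-factors by $\vertiii{u}_T\,s^{-\alpha}$ and $\vertiii{u}_T\,s'^{-\alpha}$, and applies H\"older in $s$ with a free parameter $\delta\in(2,4)$; the optimal choice $\delta=8/3$ yields exactly $\alpha>3/8$. Your Cauchy--Schwarz sketch points at the same mechanism but leaves the exponent bookkeeping unspecified; carrying it out you recover the same threshold.
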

\begin{proof}
	By the inhomogeneous Strichartz estimate,
	\begin{multline}
	\Big\|\nabla^{(i)}\int_0^tS(t-s)\int_0^se^{-(s-s')}|u(s')|^2u(s)ds'ds  \Big\|_{L_T^qL_x^r}\lesssim 
	\Big\|\nabla^{(i)}\int_0^se^{-(s-s')}|u(s')|^2u(s)ds'\Big\|_{L_T^{\frac 43}L_x^{\frac 43}}.
	\end{multline}
	For $i=0$,
$$\Big\|\int_0^se^{-(s-s')}|u(s')|^2u(s)ds'\Big\|_{L_T^{\frac 43}L_x^{\frac 43}}\leq \Big\|\int_0^se^{-(s-s')}\|u(s')\|_{L^4}^2\|u(s)\|_{L^4}ds'\Big\|_{L_T^{\frac 43}}$$
$$\leq \|u\|_{L_T^4L_x^4}\Big\|\int_0^se^{-(s-s')}\|u(s')\|_{L^4}^2ds'\Big\|_{L_T^{2}}.$$
We now apply the Paley-Littlewood-Sobolev inequality:
\begin{equation}
\label{pls}
\Big\|\int_0^se^{-(s-s')}\|u(s')\|_{L^4}^2ds'\Big \|_{L_T^{2}}\leq \Big\|e^{-|s|}\ast\frac 1{|s|^{2\alpha}}\Big\|_{L^2}\vertiii{u}_T^2\lesssim \vertiii{u}_T^2
\end{equation}
for $\displaystyle 1+\frac 12=\frac 1p+2\alpha,\ p>1$, provided $e^{-|s|}\in L^p$. Since it is the case for all $p$, the estimate \eqref{pls} holds for all $\alpha>\frac 14$.\\
Now, for $i=1$,
$$\Big\|\nabla\int_0^se^{-(s-s')}|u(s')|^2u(s)ds'\Big\|_{L_T^{\frac 43}L_x^{\frac 43}}\leq \Big\|\int_0^se^{-(s-s')}\|u(s')\|_{L^4}^2\|\nabla u(s)\|_{L^4}ds'\Big\|_{L_T^{\frac 43}}$$
$$+\Big\|\int_0^se^{-(s-s')}\|u(s')\|_{L^4}\|\nabla u(s')\|_{L^4}\|\nabla u(s)\|_{L^4}ds'\Big\|_{L_T^{\frac 43}}.$$
The first norm can be estimated exacly as in the previous situation.\\
We split the second norm in two parts:
$$\Big\|\int_0^se^{-(s-s')}\|u(s')\|_{L^4}\|\nabla u(s')\|_{L^4}\|\nabla u(s)\|_{L^4}ds'\Big\|_{L_T^{\frac 43}}$$
$$\leq \Big\|\int_0^se^{-(s-s')}\|u(s')\|_{L^4}\|\nabla u(s')\|_{L^4}\|\nabla u(s)\|_{L^4}ds'\Big\|_{L^{\frac 43}(0,1)}$$
$$+\Big\|\int_0^se^{-(s-s')}\|u(s')\|_{L^4}\|\nabla u(s')\|_{L^4}\|\nabla u(s)\|_{L^4}ds'\Big\|_{L_T^{\frac 43}(1,T)}.$$
On one hand,
$$\Big\|\int_0^se^{-(s-s')}\|u(s')\|_{L^4}\|\nabla u(s')\|_{L^4}\|\nabla u(s)\|_{L^4}ds'\Big\|_{L^{\frac 43}(0,1)}$$
$$\leq \|\nabla u\|_{L_T^4L_x^4}\Big\|\Big(\int_0^se^{-\frac 43(s-s')}\|u(s')\|_{L^4}^{\frac 43} \Big)^{\frac 34}\|u(s)\|_{L^4}\Big\|_{L^{\frac 43}(0,1)}$$
$$\lesssim \|\nabla u\|_{L_T^4L_x^4}\| u\|_{L_T^{\infty}L_x^4}^2\lesssim \|\nabla u\|_{L_T^4L_x^4}\| u\|_{L_T^{\infty}H_x^1}^2.$$
\smallskip
\noindent
On the other hand,
$$\Big\|\int_1^se^{-(s-s')}\|u(s')\|_{L^4}\|\nabla u(s')\|_{L^4}\| u(s)\|_{L^4}ds'\Big\|_{L^{\frac 43}(1,T)}$$
$$\lesssim \|\nabla u\|_{L_t^4L_x^4}\Big\|\| u(s)\|_{L^4}\Big(\int _1^se^{-\frac 43(s-s')}\|u(s')\|_{L^4}^{\frac 43}  \Big)^{\frac 34}\Big\|_{L^{\frac 43}(1,T)}$$
$$\lesssim \|\nabla u\|_{L_t^4L_x^4}\Big\|\Big(e^{-\frac 43|s'|}\ast \frac 1{{|s'|}^{\frac 43\alpha}}\Big)^{\frac 34}(s)\frac 1{s^{\alpha}}\Big\|_{L^\frac 43(1,T)}$$
and, by H\"older, for some $2<\delta<4$ to be chosen later,
$$\Big\|\Big(e^{-\frac 43|s'|}\ast \frac 1{{|s'|}^{\frac 43\alpha}}\Big)^{\frac 34}(s)\frac 1{s^{\alpha}}\Big\|_{L^\frac 43(1,T)}\leq \Big\| e^{-\frac 43|s'|}\ast \frac 1{{|s'|}^{\frac 43\alpha}}\Big\|_{L^{\frac {3\delta}{3\delta-4}}}\Big\|\frac 1{s^{\alpha}} \Big\|_{L^{\delta}(1,T)} $$
which is bounded for $\alpha>\max\{\frac 1\delta,\frac 34-\frac 1\delta\}$ and the lemma is proved by taking $\delta=\frac 83$.
\end{proof}
\section{(Modified) scattering in dimension $d=1$}
\noindent
In this section, we show Theorem \ref{modified}, which states a modified scattering result in dimension $d=1$. As stated in the introduction, we deeply rely on the ideas in \cite{Pusateri} which, in turn, derive from the works of Germain, Masmoudi and Shatah (\cite{germain}) concerning the space-time resonances method.

For $(u_0,v_0)\in \Sigma(\er)\times H^1(\er)$ we consider the solution $(u,v)$ to the Cauchy problem (\ref{SD}).
We begin by rewriting the integral version of \eqref{SD} for initial data taken at $t=1$:
\begin{multline*}
u(x,t)=S(t-1)u(x,1)+i\int_1^tS(t-s)u(s)v(s)ds\\
=S(t-1)u(x,1)+i\int_1^tS(t-s)u(s)\Big(e^{-(s-1)}v(x,1)+\int_1^se^{-(s-s')}|u(s')|^2ds'\Big)ds,
\end{multline*}
or, putting $(u_*,v_*):=(e^{-i\frac{\Delta}2}u(1),ev(1))$,
\begin{equation}
\label{duhamel1}
u(x,t)=S(t)u_*(x)+i\int_1^tS(t-s)u(s)\Big(e^{-s}v_*(x)+\int_1^se^{-(s-s')}|u(s')|^2ds'\Big)ds.
\end{equation}
We now define the profile $$f(t):=S(-t)u(t).$$ 
\noindent
The proof of Theorem \ref{modified} relies essentially on the following Proposition (and its proof):
\begin{Propriedade}
	\label{modifiedapriori}
	
	Let $(u_0,v_0)\in \Sigma(\er)\times H^1(\er)$ and $(u,v)$ the solution to  (\ref{SD}) given by Theorem 3.2. Then, putting
	$$\|u\|_{Y_T}:=\|t^{\frac 12} u\|_{L_T^{\infty}L_x^{\infty}}+\|t^{-\alpha} u\|_{L_T^{\infty}H_x^{1}}+\|t^{-\alpha}xf\|_{L_T^{\infty}L_x^{2}}+\|u\|_{L_T^{\infty}L_x^2},$$
	for all $T>0$,
	$$\|u\|_{Y_T}\lesssim \|u_*\|_{\Sigma}+\|v_*\|_{H^1}\|u\|_{Y_T}+\|u\|_{Y_T}^3.$$
\end{Propriedade}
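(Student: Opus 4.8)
The plan is to bound the four quantities making up $\|u\|_{Y_T}$ separately, all by pointwise-in-time estimates based on the Duhamel formula \eqref{duhamel1} and the $L^2$-unitarity of $S(t)$; in dimension $d=1$ no Strichartz estimate is needed. Since the local theory (Theorem \ref{bi}) produces $u(1)\in\Sigma(\er)$, $v(1)\in H^1(\er)$ with norms $\lesssim\|u_*\|_\Sigma+\|v_*\|_{H^1}$, every quantity at the base time $t=1$ is already controlled; I write $R:=\|u_*\|_\Sigma+\|v_*\|_{H^1}\|u\|_{Y_T}+\|u\|_{Y_T}^3$ for the target right-hand side. The $L^2$ bound is free: conservation of mass gives $\|u\|_{L^\infty_TL^2_x}=\|u_*\|_{L^2}\le R$. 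For $\|t^{-\alpha}u\|_{L^\infty_TH^1_x}$ I would differentiate \eqref{duhamel1}; using that $e^{-(s-s')}$ concentrates $s'\approx s$ and $\|u(s')\|_{L^\infty}\lesssim (s')^{-1/2}\|u\|_{Y_T}$, the cubic contribution to $\|u(t)\|_{H^1}$ is $\lesssim\|u\|_{Y_T}^2\int_1^t s^{-1}\|u(s)\|_{H^1}\,ds\lesssim t^\alpha\|u\|_{Y_T}^3$, while the $e^{-s}v_*$ term is handled by the $H^1(\er)$-algebra property; dividing by $t^\alpha\ge1$ closes it. For $\|t^{-\alpha}xf\|_{L^\infty_TL^2_x}$ I would use $xf(t)=S(-t)\Gamma(t)u(t)$ with $\Gamma(t)=x+it\nabla$ and $[\Gamma,i\partial_t+\tfrac12\Delta]=0$, and reproduce the manipulation of the proof of Lemma \ref{Gamma}: inserting \eqref{vintegrada} (with base time $1$) and using $E(\tau)=-\tau e^{\tau}$ and $is\nabla|u|^2=\bar uw-u\bar w$ with $w=\Gamma u$, one gets $iw_t+\tfrac12\Delta w=N_1+N_2+N_3+N_4$ where $N_2=vw$ is linear in $w$ (a bootstrap) and $N_1,N_3,N_4$ carry either a bounded decaying convolution kernel or two powers of $\|u\|_{L^\infty}$; with $\|v(s)\|_{L^\infty}\lesssim e^{-(s-1)}\|v_*\|_{H^1}+s^{-1}\|u\|_{Y_T}^2$ this closes as $\|xf(t)\|_{L^2}\lesssim t^\alpha R$.

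The delicate term is $\|t^{\frac12}u\|_{L^\infty_TL^\infty_x}$. I would use the factorization $S(t)=M(t)D(t)\mathcal F M(t)$, $M(t)=e^{ix^2/2t}$, $D(t)g(x)=(it)^{-1/2}g(x/t)$, which gives $\|u(t)\|_{L^\infty}\le t^{-1/2}\big(\|\hat f(t)\|_{L^\infty}+\|(M(t)-1)f(t)\|_{L^1}\big)$; splitting $\{|x|\le\sqrt t\}$ and $\{|x|>\sqrt t\}$ and using $|M(t)-1|\lesssim\min(1,x^2/t)$ bounds the error term by $t^{-1/4}\|xf(t)\|_{L^2}\lesssim t^{-1/4+\alpha}R$, which is $\lesssim R$ once $\alpha<1/4$. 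So everything reduces to a uniform bound on $\|\hat f(t)\|_{L^\infty}$. From $\partial_tf(t)=iS(-t)(u(t)w(t))$ with $w(t)=e^{-t}v_*+\int_1^te^{-(t-s)}|u(s)|^2\,ds$, one has $\partial_t\hat f(t,\xi)=ie^{it\xi^2/2}\,\widehat{u(t)w(t)}(\xi)$; the $e^{-t}v_*$ part contributes $\lesssim e^{-t}\|u(t)\|_{L^2}\|v_*\|_{L^2}$, time-integrable with total $\lesssim\|v_*\|_{H^1}\|u\|_{Y_T}$. For the cubic part I would substitute $u(s)=M(s)D(s)\mathcal F M(s)f(s)$, so that $|u(s)|^2(x)=s^{-1}|\hat f(s,x/s)|^2$ plus $M(s)-1$ corrections, turning $\widehat{u(t)w(t)}(\xi)$ into a trilinear oscillatory integral in $(\eta,s)$ whose phase is stationary only on the resonant set $\{\eta=\xi\}$, while the memory kernel $e^{-(t-s)}$ concentrates $s\approx t$. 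Isolating the resonant contribution by stationary phase and by integration by parts in $\eta$ and $s$ (each $\eta$-integration by parts costing a factor $\|\partial_\xi\hat f(s)\|_{L^2}=\|xf(s)\|_{L^2}\lesssim s^\alpha R$ but gaining a negative power of $s$) should produce
$$\partial_t\hat f(t,\xi)=iA(t,\xi)\,\hat f(t,\xi)+\mathcal R(t,\xi),\qquad A(t,\xi)=\int_1^t\frac{1}{2s'}e^{-(t-s')}\Big|\hat f\Big(\tfrac{t}{s'}\xi,s'\Big)\Big|^2\,ds'\in\er,$$
with $\int_1^\infty\|\mathcal R(s)\|_{L^\infty_\xi}\,ds\lesssim\|v_*\|_{H^1}\|u\|_{Y_T}+\|u\|_{Y_T}^3$ provided $\alpha$ is small relative to the decay gains. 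Since $A$ is real, $\tfrac{d}{dt}|\hat f(t,\xi)|\le|\mathcal R(t,\xi)|$, so $\|\hat f(t)\|_{L^\infty}\le\|\hat u_*\|_{L^\infty}+\int_1^t\|\mathcal R(s)\|_{L^\infty_\xi}\,ds\lesssim R$ (using $\|\hat u_*\|_{L^\infty}\le\|u_*\|_{L^1}\lesssim\|u_*\|_\Sigma$). Collecting the four bounds gives $\|u\|_{Y_T}\lesssim R$, as claimed; integrating the last display also yields $\hat f(t,\xi)=e^{i\Psi(t,\xi)}(\hat u_*(\xi)+o(1))$ with $\Psi(\xi,t)=\int_1^tA(s,\xi)\,ds$, which is the phase in Theorem \ref{modified}.

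The hard part will be the trilinear oscillatory-integral analysis of the cubic term: extracting the resonant contribution, verifying that $A$ is real (so that $|\hat f|$ is almost conserved along the flow), and checking that every non-resonant piece and every $M(s)-1$ correction decays faster than $s^{-1}$ after paying the $s^\alpha$ price of the weighted norm and exploiting the $s^{-1/4}$ gain of the spatial cut-offs — this is exactly the balance that forces $\alpha$ to be chosen small, and it will follow the scheme of \cite{Pusateri}. The Debye memory kernel $e^{-(t-s)}$ is the only genuinely new feature compared with (NLS), but it is harmless: it merely reinforces the localization $s\approx t$ that is responsible for the modified scattering.
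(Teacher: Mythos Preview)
Your proposal is correct and follows the same overall strategy as the paper --- pointwise-in-time energy estimates for the $L^2$, $H^1$ and weighted pieces, then the Hayashi--Naumkin/Kato--Pusateri mechanism to control $\|\hat f\|_{L^\infty}$ through the real-phase ODE $\partial_t\hat f=iA\hat f+\mathcal R$ with the exact $A$ that yields $\Psi$ of Theorem~\ref{modified}. Two points of comparison are worth recording. First, for the weighted norm $\|xf\|_{L^2}$ you work in physical space with $\Gamma=x+it\partial_x$ and the decomposition $iw_t+\tfrac12\Delta w=N_1+\dots+N_4$ of Lemma~\ref{Gamma}, estimating each $\|N_j(s)\|_{L^2}$ by $\|u(s)\|_{L^\infty}$, $\|v(s)\|_{L^\infty}$ and $\|w(s)\|_{L^2}$; the paper instead differentiates the trilinear Fourier formula \eqref{fchapeu} in $\xi$ and redistributes the phase via $\eta=(\xi-\sigma)-(\xi-\sigma-\eta)$. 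Your route is shorter and avoids the Fourier bookkeeping; the paper's route has the advantage of reusing the same Fourier representation needed later for the $L^\infty$ analysis. Second, for the remainder $\mathcal R$ in the $\hat f$-ODE you invoke stationary phase and integration by parts in $\eta$; the paper proceeds differently: after the change of variables $\sigma\mapsto\tilde\sigma$ it applies Plancherel in $(\eta,\sigma)$ to pass to an $x$-side trilinear integral, then bounds $|e^{i\eta'\sigma'/4s}-1|\lesssim s^{-\delta}|\eta'|^\delta|\sigma'|^\delta$ and redistributes the fractional weights $|\eta'|^\delta,|\sigma'|^\delta$ onto the arguments of $f$, obtaining $|\mathcal R(s,\xi)|\lesssim \|u\|_{Y_T}^3\,s^{3\alpha-\delta-1}$. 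No genuine integration by parts in $s$ is used (the Debye kernel $e^{-(t-s)}$ already localizes $s\approx t$), so that part of your description should be dropped; otherwise both arguments close under the same constraint $3\alpha<\delta<\tfrac12$, $\alpha<\tfrac14$.
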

\begin{proof}
By applying $S(-t)$ to \eqref{duhamel1} and taking the Fourier transform,
\begin{equation}
\hat{f}(\xi)=\hat{u_*}+i\int_1^t e^{-i\frac 12\xi^2s} e^{-s}\widehat{u(s)v_*(x)}(\xi)ds+i\int_1^t\int_1^se^{-i\frac 12\xi^2s}e^{-(s-s')}\widehat{|u(s')|^2u(s)}(\xi)ds'ds,
\end{equation}
that is
\begin{equation}
\label{fchapeu3}
\begin{array}{lllll}
\hat{f}(\xi)&=&\displaystyle \hat{u_*}+i\int_1^t e^{-i\frac 12\xi^2s} e^{-s}\int{\hat{u}(\xi-\eta,s)\hat{v_*}(\eta)}d\eta ds\\
&&\displaystyle +i\int_1^t\int_1^s\iint e^{-i\frac 12\xi^2s}e^{-(s-s')}\hat{u}(\sigma,s')\overline{\hat{u}(\sigma-\eta,
	s')} \hat{u}(\xi-\eta,s)d\eta d\sigma ds'ds\\
&=&\displaystyle \hat{u_*}+i\int_1^t\int e^{-i\frac 12(\xi^2-(\xi-\eta)^2)s} e^{-s}{\hat{f}(\xi-\eta,s)\hat{v_*}(\eta,s)}d\eta ds\\
&&\displaystyle +i\int_1^t\int_1^s\iint e^{i\phi}e^{-(s-s')}\hat{f}(\sigma,s')\overline{\hat{f}(\sigma-\eta,
	s')} \hat{f}(\xi-\eta,s)d\eta d\sigma ds'ds,
\end{array}
\end{equation}
where
\begin{displaymath}
\begin{array}{llll}
\phi=\phi(\xi,\eta,\sigma)&=&-\frac 12(\xi^2s+\sigma^2s'-(\sigma-\eta)^2s'+(\xi-\eta)^2s)\\
&=&\eta(\sigma-\xi)s+(\sigma\eta-\frac 12\eta^2)(s-s').
\end{array}
\end{displaymath}
Finally, putting
$$F(s,\xi,\sigma,\eta)=\int_1^se^{i\frac 12(\eta^2-2\eta\sigma)(s-s')}e^{-(s-s')}\hat{f}(\sigma,s')\overline{\hat{f}(\sigma-\eta,
	s')} \hat{f}(\xi-\eta,s)ds',$$
we obtain 
\begin{equation}
\label{fchapeu2}
\begin{array}{lllll}
\hat{f}(\xi)&=&\displaystyle \hat{u_*}+i\int_1^t\int e^{i\frac 12(\eta^2-2\xi\eta)s} e^{-s}{\hat{f}(\xi-\eta,s)\hat{v_*}(\eta,s)}d\eta ds\\
&&\displaystyle +i\int_1^t\int_1^s\iint e^{i\eta(\sigma-\xi)s}F(s,\xi,\sigma,\eta)dsd\eta d\sigma,
\end{array}
\end{equation}
that is, by a change of variables in the last integral,
\begin{equation}
\label{fchapeu}
\begin{array}{lllll}
\hat{f}(\xi)&=&\displaystyle \hat{u_*}+i\int_1^t\int e^{\frac 12i(\eta^2-2\xi\eta)s} e^{-s}{\hat{f}(\xi-\eta,s)\hat{v_*}(\eta,s)}d\eta ds\\
&&\displaystyle +i\int_1^t\int_1^s\iint e^{-i\eta\sigma s}F(s,\xi,\xi-\sigma,\eta)dsd\eta d\sigma.
\end{array}
\end{equation}
We now use these formulae to estimate the various norms appearing in $\|\cdot\|_{Y_T}$.

\medskip
\noindent {\bf Estimate for $\|t^{-\alpha}\partial_xu\|_{L_t^{\infty}L_x^2}$}:
From \eqref{duhamel1} we obtain
$$\|\partial_xu\|_{L^2}\leq \|\partial_x u_*\|_{2}+\int_1^te^{-s}\|\partial_x(uv_*)\|_2ds+\int_1^t\int_1^se^{-(s-s')}\|\partial_x(u^2(s')u(s))\|_{L^2}ds'.$$ 
Also,
$$\int_1^te^{-s}\|\partial_x(uv_*)\|_2ds\leq \int_1^te^{-s}(\|v_*\|_{\infty}\|\partial_xu\|_2+\|\partial_xv_*\|_{2}\|u\|_\infty)ds$$$$\lesssim \|v_*\|_{H^1}\int_1^te^{-s}\|u\|_{H^1}ds\lesssim \|v_*\|_{H^1}\|u\|_{Y_T}\int_1^ts^{\alpha}e^{-s}ds\lesssim \|v_*\|_{H^1}\|u\|_{Y_T}.$$
Now, since
$$\|\partial_x(u^2(s')u(s))\|_{L^2}\lesssim \|u(s')\|_{\infty}^2\|\partial_xu(s)\|_{L^2}+ \|u(s')\|_{\infty}\|u(s)\|_{\infty}\|\partial_xu(s')\|_{L^2},$$
we obtain
\begin{align*}
\int_1^t\int_1^se^{-(s-s')}\|\partial_x(u^2(s')u(s))\|_{L^2}ds'ds \lesssim \int_1^t\|\partial_xu(s)\|_{L^2}\int_1^se^{-(s-s')}\|u(s')\|_{\infty}^2ds'ds\\+\int_1^t\|u(s)\|_{\infty}\int_1^se^{-(s-s')}\|u(s')\|_{\infty}\|\partial_xu(s')\|_{2}ds'ds \\\lesssim  \Big(\int_1^ts^{\alpha}\int_1^se^{-(s-s')}{s'}^{-1}ds'ds+\int_1^ts^{-\frac 12}\int_1^se^{-(s-s')}{s'^{\alpha-\frac 12}}ds'ds\Big)\|u\|_{Y_T}^2
\end{align*}

In order to estimate these integrals, notice that for all $\beta\in\er$ and $\gamma>0$, 
$$\displaystyle\lim_{s\to \infty}\frac{\int_1^s\frac{e^{\gamma s'}}{{s'}^{\beta}}ds'}{\frac{e^{\gamma s}}{s^{\beta}}}=\frac 1{\gamma},$$
hence for all $\beta$ and for all $s\geq 1$,
\begin{equation}
\label{estfant}
\int_1^s\frac{e^{\gamma s'}}{{s'}^{\beta}}ds'\lesssim \frac{ e^{\gamma s}}{s^{\beta}}.
\end{equation}
Hence,
\begin{equation}
\int_1^ts^{\alpha}\int_1^se^{-(s-s')}{s'}^{-1}ds'ds+\int_1^ts^{-\frac 12}\int_1^se^{-(s-s')}{s'^{\alpha-\frac 12}}ds'ds
\lesssim \int_1^t s^{\alpha-1}ds\lesssim t^{\alpha}.
\end{equation}
Finally, 
\begin{equation}
\label{estimativab}
\|t^{-\alpha}\partial_x u\|_{L_t^{\infty}L^2_x}\lesssim \|\partial_xu_*\|_{L^2}+\|v_*\|_{H^1}\|u\|_{Y_T}+\|u\|_{Y_T}^3.
\end{equation}

\bigskip
\noindent 
{\bf Estimate for $\|t^{-\alpha}xf\|_{L_t^{\infty}L_x^2}$}:
From \eqref{fchapeu},
\begin{multline*}
\|xf\|_{L^2}=\|{\partial_{\xi}\hat{f}}\|_{L^2}\lesssim \|\partial_{\xi}\widehat{u_*}\|_{L^2}\\
+\displaystyle \Big\|\int_1^t\int \partial_{\xi}\Big( e^{i\frac 12(\eta^2-2\xi\eta)s} e^{-s}{\hat{f}(\xi-\eta,s)\hat{v_*}(\eta)}\Big)d\eta ds\Big\|_{L^2}
\\
\displaystyle +\Big\|\int_1^t\int_1^s\iint \partial_{\xi}\Big(e^{-i\eta\sigma s}F(s,\xi,\xi-\sigma,\eta)\Big)dsd\eta d\sigma\Big\|_{L^2}
\end{multline*}
\begin{multline}
\label{inter}
\lesssim \|xu_*\|_{L^2}+\displaystyle\displaystyle \Big\|\int_1^t\int \Big( e^{i\frac 12(|\xi-\eta|^2-\xi^2)s} e^{-s}{\hat{f}(\xi-\eta,s)\eta\hat{v_*}(\eta)}\Big)d\eta ds\Big\|_{L^2}\\
+\displaystyle\displaystyle \Big\|\int_1^t\int \Big( e^{i\frac 12(|\xi-\eta|^2-\xi^2)s} e^{-s}{\widehat{[x f]}(\xi-\eta,s)\hat{v_*}(\eta)}\Big)d\eta ds\Big\|_{L^2}\\
+\displaystyle \Big\|\int_1^t\int_1^s\iint \Big(e^{-i\eta\sigma s}\partial_{\xi}F(s,\xi,\xi-\sigma,\eta)\Big)dsd\eta d\sigma\Big\|_{L^2}\\
\lesssim \|xu_*\|_{L^2}+I+II+III
\end{multline}
We proceed with the estimation of the three integrals in the right-hand-side of \eqref{inter}:
\begin{multline*}
I\lesssim \Big\|\Big(\int_1^te^{-s}S(-s)\int {\widehat{S(s)f}(\xi-\eta,s)\widehat{\partial_{x}v_*}(\eta,s)}d\eta ds\Big)\Big\|_{L^2}\\
\lesssim \int_0^t e^{-s}\Big\|\mathcal{F}\Big(u(s)\partial_{x}v_*\Big)\Big\|_{L^2}ds \lesssim \|\partial_x v_*\|_{L^2}\int_0^te^{-s}\|u(s)\|_{L^{\infty}}ds\\ \lesssim
\|\partial_x v_*\|_{L^2}\|u\|_{Y_T}\int_0^te^{-s}s^{\frac 12}ds \lesssim \|\partial_x v_*\|_{L^2}\|u\|_{Y_T}.
\end{multline*}
Similarly,
$$II\lesssim  \int_0^t e^{-s}\Big\|\mathcal{F}\Big(xf(s)v_*\Big)\Big\|_{L^2}ds\lesssim \|v_*\|_{L^2}\|u\|_{Y_T}\int_1^te^{-s}s^{\alpha}ds\lesssim \|v_*\|_{L^2}\|u\|_{Y_T}.$$
We now estimate $III$. First, notice that
\begin{multline*}
\partial_{\xi}F(s,\xi,\xi-\sigma,\eta)=\\
2i\int_1^s(s'-s)\eta e^{i\frac 12(\eta^2-2\eta(\xi-\sigma))(s-s')}e^{-(s-s')}\hat{f}(\xi-\eta,s)\overline{\hat{f}(\xi-\eta-\sigma,s')}\hat{f}(\xi-\sigma,s')ds'\\
+\int_1^se^{i\frac 12(\eta^2-2\eta(\xi-\sigma))(s-s')}e^{-(s-s')}\partial_{\xi}(\hat{f}(\xi-\eta,s)\overline{\hat{f}(\xi-\eta-\sigma,s')}\hat{f}(\xi-\sigma,s'))ds'.
\end{multline*}
Writing $\eta=(\xi-\sigma)-(\xi-\sigma-\eta)$,
\begin{multline*}
\partial_{\xi}F(s,\xi,\xi-\sigma,\eta)=\\
-2\int_1^s(s'-s) e^{i\frac 12(\eta^2-2\eta(\xi-\sigma))(s-s')}e^{-(s-s')}\hat{f}(\xi-\eta,s)\overline{\hat{f}(\xi-\eta-\sigma,s')}\widehat{\partial_xf}(\xi-\sigma,s')ds'\\
-2\int_1^s(s'-s) e^{i\frac 12(\eta^2-2\eta(\xi-\sigma))(s-s')}e^{-(s-s')}\hat{f}(\xi-\eta,s)\overline{\widehat{\partial_x f}(\xi-\eta-\sigma,s')}\widehat{f}(\xi-\sigma,s')ds'\\
+\int_1^se^{i\frac 12(\eta^2-2\eta(\xi-\sigma))(s-s')}e^{-(s-s')}\partial_{\xi}(\hat{f}(\xi-\eta,s)\overline{\hat{f}(\xi-\eta-\sigma,s')}\hat{f}(\xi-\sigma,s'))ds'\\
:=G_1(s,\xi,\eta,\sigma)+G_2(s,\xi,\eta,\sigma)+G_3(s,\xi,\eta,\sigma).
\end{multline*}
Now, 
\begin{multline*}
\Big\|\int_1^t\iint e^{-i\eta\sigma s} G_1(s,\xi,\eta,\sigma)d\eta d\sigma ds\Big\|_{L^2}
\lesssim \Big\|\int_1^t\int_1^s(s'-s)e^{-(s-s')}\\
\iint e^{i\frac 12(\eta^2-2\eta(\xi-\sigma))(s-s')}e^{-i\eta\sigma s} \hat{f}(\xi-\eta,s)\overline{\widehat{\partial_x f}(\xi-\eta-\sigma,s')}\widehat{\partial_xf}(\xi-\sigma,s')d\eta d\sigma ds'ds\Big\|_{L^2}.
\end{multline*}
By redistributing the phase term, that is, observing that
$$(\eta^2-2\eta(\xi-\sigma))(s-s')-2\sigma \eta s=-\xi^2s+(\xi-\eta)^2s-(\xi-\eta-\sigma)^2s'+(\xi-\sigma)^2s',$$
we
obtain
\begin{multline*}
\Big\|\int_1^t\iint e^{-2i\eta\sigma s} G_1(s,\xi,\eta,\sigma)d\eta d\sigma ds\Big\|_{L^2}\\
\lesssim \Big\|\mathcal{F}\int_1^t\int_1^s(s'-s)e^{-(s-s')}S(-s)\Big[S(s)f(s)\overline{S(s')f(s')}S(s')\partial_xf(s')]ds'ds\Big\|_{L^2}\\
\lesssim  \int_1^t\|u(s)\|_{L^{\infty}}\int_1^s(s-s')e^{-(s-s')}\|u(s')\|_{L^{\infty}}\|\partial_xu(s')\|_{L^{2}}ds'ds\\
\lesssim \|u\|_{Y_T}^3\int_1^ts^{-\frac 12}\int_1^se^{-\frac 12(s-s')}{s'}^{-\frac 12+\alpha}ds'ds \lesssim \|u\|_{Y_T}^3\int_1^t s^{\alpha-1}ds\lesssim t^{\alpha}\|u\|_{Y_T}^3
\end{multline*}
by \eqref{estfant}, and, analogously,
$$\Big\|\int_1^t\iint e^{-i\eta\sigma s} G_2(s,\xi,\eta,\sigma)d\eta d\sigma ds\Big\|_{L^2}\lesssim t^{\alpha}\|u\|_{Y_T}^3.$$
With respect to the third integral, similar computations yield
\begin{multline*} 
\Big\|\int_1^t\iint e^{-i\eta\sigma s} G_3(s,\xi,\eta,\sigma)d\eta d\sigma ds\Big\|_{L^2}\\\lesssim \int_1^t\int_1^se^{-(s-s')}\|xf(s)\|_{L^2}\|u(s')\|_{L^{\infty}}^2ds'ds\\
+\int_1^t\int_1^se^{-(s-s')}\|xf(s')\|_{L^2}\|u(s)\|_{L^{\infty}}\|u(s')\|_{L^{\infty}}ds'ds\\
\lesssim \|u\|_{Y_T}^3\Big(\int_1^t\int_1^s e^{-(s-s')}{s'}^{-1}s^{\alpha}ds'ds+\int_1^t\int_1^s e^{-(s-s')}{s'}^{\alpha-\frac 12}s^{-\frac 12}ds'ds\Big)\lesssim \|u\|_{Y_T}^3t^{\alpha}.
\end{multline*}
Finally, $III\lesssim \|u\|_{Y_T}^3t^{\alpha}$ and we obtain
\begin{displaymath}
\|xf\|_{L^2}\lesssim \|xu_*\|_{L^2}+\|v_*\|_{H^1}\|u\|_{Y_T}+\|u\|_{Y_T}^3t^{\alpha},
\end{displaymath}
that is
\begin{equation}
\label{estimativac}
\|t^{-\alpha}xf\|_{L_T^{\infty}L^2_x}\lesssim \|\partial_xu_*\|_{L^2}+\|v_*\|_{H^1}\|u\|_{Y_T}+\|u\|_{Y_T}^3.
\end{equation}
{\bf Estimate for $\|t^{\frac 12}u\|_{L_t^{\infty}L^{\infty}_x}$}: From \eqref{fchapeu3}, we write
\begin{displaymath}\label{eq:fchapeu}
\begin{array}{lllll}
\hat{f}(\xi)&=&\displaystyle \hat{u_*}+i\int_1^t\int e^{-i\frac 12(\xi^2-(\xi-\eta)^2)s} e^{-s}{\hat{f}(\xi-\eta,s)\hat{v_*}(\eta,s)}d\eta ds\\
&&\displaystyle +i\int_1^t\int_1^s\iint e^{i\phi}e^{-(s-s')}\hat{f}(\sigma,s')\overline{\hat{f}(\sigma-\eta,
	s')} \hat{f}(\xi-\eta,s)d\eta d\sigma ds'ds\\
\\
&=&\displaystyle \hat{u_*}+i\int_1^tR_1(\xi,s)ds+\displaystyle i\int_1^t\int_1^s\iint e^{i\phi} G(s',s,\xi,\eta,\sigma)d\eta d\sigma ds'ds,$$
\end{array}
\end{displaymath}
where
$$G(s',s,\xi,\eta,\sigma)=e^{-(s-s')}\hat{f}(\sigma,s')\overline{\hat{f}(\sigma-\eta,
	s')} \hat{f}(\xi-\eta,s),$$
$$R_1(\xi,s)=\int e^{-i\frac 12(\xi^2-(\xi-\eta)^2)s} e^{-s}{\hat{f}(\xi-\eta,s)\hat{v_*}(\eta,s)}d\eta$$
and
$$\phi=-\xi\eta s+\frac 12\eta^2s-\frac 12\eta^2s'+\eta\sigma s'=-\xi\eta s+\eta \tilde{\sigma}s,$$
with $\tilde{\sigma}=\frac 1{2s}(\eta(s-s')+2\sigma s')$. By performing the change of variables $\sigma \mapsto \tilde{\sigma}$, we obtain
\begin{multline}
\label{interbis}
\int_1^t\int_1^s \iint e^{i\phi}G(s,\xi,\eta,\sigma)d\eta d\sigma ds'ds\\
=\int_1^t\int_1^s\iint e^{i\eta\xi s}e^{-i\eta\sigma s}\frac s{s'} G\Big(s',s,\xi,\eta,\sigma\frac{s}{s'}-\eta\frac{s-s'}{2s'}\Big)d\eta d\sigma ds' ds\\
=\int_1^t\int_1^s\iint \frac s{s'}\mathcal{F}_{\eta,\sigma}(e^{i\eta\xi s}e^{-i\eta\sigma s}) \mathcal{F}_{\eta,\sigma}^{-1}\Big(G\Big(s',s,\xi,\eta,\sigma\frac{s}{s'}-\eta\frac{s-s'}{2s'}\Big)\Big)d\eta' d\sigma' ds' ds
\end{multline}
by Plancherel. Furthermore,
$$\mathcal{F}_{\eta,\sigma}(e^{i\eta\xi s}e^{-i\eta\sigma s})=\mathcal{F}_{\eta,\sigma}(e^{-i\eta\sigma s})(\eta'-\xi s,\sigma')=\frac 1{2s}e^{i\frac 12\frac{i\eta'\sigma'}{2s}}e^{-i\frac 12\xi\sigma'}.$$
On the other hand,
\begin{align*}
&\mathcal{F}_{\sigma}^{-1}\Big(G\Big(s',s,\xi,\eta,\sigma\frac{s}{s'}-\eta\frac{s-s'}{2s'}\Big)\Big)\\
=\ &e^{-(s-s')}\hat{f}(\xi-\eta,s)\mathcal{F}^{-1}_{\sigma}\Big(\overline{\hat{f}\Big(\frac{s}{s'}\Big(\sigma-\eta\frac{s+s'}{4s}\Big),s')}\hat{f}\Big(\frac{s}{s'}\Big(\sigma-\eta\frac{s+s'}{2s}+\eta\frac{s'}s\Big)\Big),s'\Big)\Big)\Big)\\
=\ &e^{-(s-s')}\hat{f}(\xi-\eta,s)e^{i\sigma'\eta\frac{s+s'}{4s}}\mathcal{F}^{-1}_{\sigma}\Big(\overline{\hat{f}\Big(\frac s{s'}\sigma,s'\Big)}\hat{f}\Big(\frac s{s'}\sigma+\eta,s'\Big)\Big)\Big)\\=\ &\frac {s'}{s}\hat{f}(\xi-\eta,s)e^{i\sigma'\eta\frac{s+s'}{4s}}\mathcal{F}^{-1}_{\sigma}\Big(\overline{\hat{f}}(\sigma,s')\hat{f}(\sigma+\eta,s')\Big)\Big(\frac{s'\sigma'}s,\eta'\Big)\\
=\ &\frac {s'}{s}\hat{f}(\xi-\eta,s)e^{i\sigma'\eta\frac{s+s'}{4s}}[\check{\overline{f}}*_{\sigma}e^{-i\sigma\eta}f]\Big(\frac{s'\sigma'}{s}\Big)\\=\ &\frac{s'}{s}\hat{f}(\xi-\eta,s)e^{i\sigma'\eta\frac{s+s'}{4s}}\int e^{-i\eta x}f(x,s')\overline{f}\Big(x-\frac{s'\sigma'}s,s'\Big)dx,
\end{align*}
where $\check{g}(x)=g(-x)$. Also,
\begin{multline*}
\mathcal{F}_{\eta}^{-1}[\hat{f}(\xi-\eta,s)e^{i\frac 12\sigma'\eta\frac{s+s'}{2s}} e^{-i\eta x}](\eta')=
\mathcal{F}_{\eta}^{-1}[\hat{f}(\xi-\eta,s)](\eta'+\sigma'\frac{s+s'}{2s}-x)\\
=[e^{i\eta\xi}\check{f}]\Big(\eta'+\sigma'\frac{s+s'}{2s}-x,s\Big)=e^{\frac 12i(\eta'+\sigma'\frac{s+s'}{2s}-x)\xi}f\Big(x-\sigma'\frac{s+s'}{2s}-\eta',s\Big)
\end{multline*}
Finally,
\begin{multline*}
\mathcal{F}_{\eta,\sigma}^{-1}\Big(G\Big(s',s,\xi,\eta,\sigma\frac{s}{s'}-\eta\frac{s-s'}{2s'}\Big)\Big)\\=e^{-(s-s')}\frac {s'}s\int e^{i\frac 12(\eta'+\sigma'\frac{s+s'}{2s}-x)\xi}f\Big(x-\sigma'\frac{s+s'}{2s}-\eta',s\Big)f(x,s')\overline{f}\Big(x-\frac{s'\sigma'}s,s'\Big)dx.
\end{multline*}
By \eqref{interbis},
\begin{multline*}
\int_1^t\int_1^s \iint e^{i\phi}G(s,\xi,\eta,\sigma)d\eta d\sigma ds'ds\\
=\int_1^t\int_1^s\iint \frac 1{2s'}e^{i\frac{i\eta'\sigma'}{4s}}e^{-\frac 12i\xi\sigma'} \mathcal{F}_{\eta,\sigma}^{-1}\Big(G\Big(s',s,\xi,\eta,\sigma\frac{s}{s'}-\eta\frac{s-s'}{2s'}\Big)\Big)d\eta' d\sigma' ds' ds\\
=\int_1^t\int_1^s\iint \frac 1{2s'}e^{-\frac 12i\xi\sigma'} \mathcal{F}_{\eta,\sigma}^{-1}\Big(G\Big(s',s,\xi,\eta,\sigma\frac{s}{s'}-\eta\frac{s-s'}{2s'}\Big)\Big) d\eta' d\sigma' ds' ds\\
\\
+\int_1^t\int_1^s\iint \frac 1{2s'}(e^{i\frac{i\eta'\sigma'}{2s}}-1)e^{-\frac 12i\xi\sigma'} \mathcal{F}_{\eta,\sigma}^{-1}\Big(G\Big(s',s,\xi,\eta,\sigma\frac{s}{s'}-\eta\frac{s-s'}{2s'}\Big)\Big) d\eta' d\sigma' ds' ds\\
=\int_1^t\int_1^s\frac 1{2s'}G(s',s,\xi,0,\xi\frac s{s'})ds'ds+\int_1^tR_2(\xi,s)ds\\
=\int_1^t\Big(\int_1^s\frac 1{2s'}e^{-(s-s')}\Big|\hat{f}\Big(\frac s{s'}\xi,s'\Big)\Big|^2ds'\Big)\hat{f}(\xi,s)ds+\int_1^tR_2(\xi,s)ds
\end{multline*}
where
\begin{equation}
R_2(\xi,s)=\int_1^s
\iint \frac 1{2s'}(e^{i\frac{i\eta'\sigma'}{4s}}-1)e^{-\frac 12i\xi\sigma'} 
\mathcal{F}_{\eta,\sigma}^{-1}\Big(G\Big(s',s,\xi,\eta,\sigma\frac{s}{s'}-\eta\frac{s-s'}{2s'}\Big)\Big) d\eta' d\sigma' ds'
\end{equation}
Furthermore,

$$|R_1(\xi,s)|\leq \|u\|_{Y_T}\|v_*\|_{L^2}e^{-s}$$
and
\begin{multline*}
|R_2(\xi,s)|
\lesssim \\\int_1^s\frac 1{s}e^{-(s-s')}\Big|\sin\Big(\frac{\eta'\sigma'}{4s'}\Big)\Big|\Big|f\Big(x-\sigma'\frac{s+s'}{2s}-\eta',s\Big)\Big||f(x,s')|\Big|f\Big(x-\frac{s'\sigma'}s,s'\Big)\Big|ds'd\eta'd\sigma'dx\\\lesssim
\int_1^s\frac 1{s{s'}^{\delta}}e^{-(s-s')}|\eta'|^{\delta}|\sigma'|^{\delta}\Big|f\Big(x-\sigma'\frac{s+s'}{2s}-\eta',s\Big)\Big||f(x,s')|\Big|f\Big(x-\frac{s'\sigma'}s,s'\Big)\Big|ds'd\eta'd\sigma'dx
\end{multline*}
for all $0<\delta<\frac 12$. Also, observe that
\begin{multline*}
|\eta'|^{\delta}\leq \Big|x-\sigma'\frac{s+s'}{2s}-\eta'\Big|^{\delta}+\Big|x-\sigma'\frac{s+s'}{2s}\Big|^{\delta}\leq
\Big|x-\sigma'\frac{s+s'}{2s}-\eta'\Big|^{\delta}\\
+\Big(\frac{s+s'}{2s'}\Big)^{\delta}\Big|x-\frac {s'}s\sigma'\Big|^{\delta}+\Big(1+\frac{s+s'}{2s'}\Big)^{\delta}|x|^{\delta}
\end{multline*}
and 
$$|\sigma'|^{\delta}\leq \Big(\frac s{s'}\Big)^{\delta}\Big|x-\frac {s'}s\sigma'\Big|^{\delta}+\Big(\frac s{s'}\Big)^{\delta}|x|^{\delta}.$$
We now have six terms to estimate. We will only treat one of the most significant, the remaining follow by similar computations:
\begin{multline*}
\int_1^s\frac 1{s^{1-2\delta}{s'}^{3\delta}}e^{-(s-s')}|x|^{2\delta}\Big|f\Big(x-\sigma'\frac{s+s'}{2s}-\eta',s\Big)\Big||f(x,s')|\Big|f\Big(x-\frac{s'\sigma'}s,s'\Big)\Big|ds'dxd\eta'd\sigma'\\
\lesssim \|u\|_{Y_T}^3\int_1^se^{-(s-s')}s'^{2\alpha-3\delta-1}s^{2\delta+\alpha}ds'\lesssim \|u\|_{Y_T}^3 s^{3\alpha-\delta-1}.
\end{multline*}
Hence, taking the time derivative of \eqref{eq:fchapeu},
\begin{equation}\label{eq:derivadafchapeu}
\partial_t\hat{f}(\xi,t)=i\Big(\int_1^t\frac 1{2s'}e^{-(t-s')}\Big|\hat{f}\Big(\frac s{s'}\xi,s'\Big)\Big|^2ds'\Big)\hat{f}(\xi,t)+\int_1^tR(\xi,s),
\end{equation}
$$|R(\xi,s)|\leq |R_1(\xi,s)+R_2(\xi,s)|\lesssim \|v_*\|_{L^2}\|u\|_{Y_T}e^{-s}+\|u\|_{Y_T}^3s^{3\alpha-\delta-1}.$$

\noindent We can now complete the proof of the $L^{\infty}$ estimate: fix $\alpha,\beta,\delta>0$ so that $3\alpha<\delta<1/2$ and $\alpha<\beta<1/4$ and write
$$\Psi(\xi,t)=\int_1^t\int_1^s\frac 1{2s'}e^{-(s-s')}\Big|\hat{f}\Big(\frac s{s'}\xi,s'\Big)\Big|^2ds'ds\quad\textrm{ and }\quad \hat{w}(\xi,t)=\hat{f}(\xi,t)e^{i\Psi(\xi,t)}.$$
Using the integrating factor $e^{i\Psi(\xi,t)}$ in \eqref{eq:derivadafchapeu},
$$
\hat{w}_t = e^{i\Psi(\xi,t)}R(t\xi), \quad t>0.
$$
Therefore
$$|\hat{f}(\xi,t)|=|\hat{w}(\xi,t)|\leq |\hat{w}(\xi,0)|+\int_1^t |\partial_t\hat{w}(\xi,s)|ds\leq |\hat{f}(\xi,0)|+\int_1^t|R(\xi,s)|ds$$
$$\lesssim \|xu_*\|_{L^2}+\|v_*\|_{L^2}\|u\|_{Y_T}e^{-t}+\|u\|_{Y_T}^3t^{3\alpha-\delta}.$$
Using Lemma 2.2 in \cite{181},
$$\|u(t)\|_{L^{\infty}}\leq \frac 1{t^{\frac 12}}\|\hat{f}\|_{L^{\infty}}+\frac 1{t^{\frac 12+\beta}}\|xf\|_{L^2}\lesssim \frac 1{t^{\frac 12}}\Big(\|xu_*\|_{L^2}+\|v_*\|_{L^2}\|u\|_{Y_T}+\|u\|_{Y_T}^3)$$
and
\begin{equation}
\label{estimativaa}
\|t^{\frac 12}u\|_{L_t^{\infty}L^{\infty}_x}\lesssim \|xu_*\|_{L^2}+\|v_*\|_{L^2}\|u\|_{Y_T}+\|u\|_{Y_T}^3,
\end{equation}
which concludes the proof of Proposition \ref{modifiedapriori}.
\end{proof}
\begin{proof}[Proof of Theorem \ref{modified}]
From Proposition \ref{modifiedapriori} it is straightforward, once again by an obstruction argument, to show  that $\|u\|_{Y_T}$ remains uniformly bounded as $t\to+\infty$ for small initial data. The decay rate of $\|u\|_{L^{\infty}}$ and the scattering result are then immediate consequences of the proof of the $L^{\infty}$ estimate \eqref{estimativaa}.
\end{proof}

\medskip

\noindent
\textbf{Acknowledgements} Sim\~ao Correia was partially supported by Funda\c{c}\~ao para a Ci\^encia e Tecnologia, through the grant SFRH/BD/96399/2013 and through contract
UID/MAT/04561/2013.\\ Filipe Oliveira  was partially supported by the Project CEMAPRE - UID/ MULTI/00491/2013 financed by FCT/MCTES through national funds. 

\bibliography{BibliotecaCO}
\bibliographystyle{plain}
\small\noindent\textsc{Sim\~ao Correia}\\
CMAF-CIO and FCUL\\
Universidade de Lisboa\\
\noindent	
Campo Grande, Edif\'\i cio C6, Piso 2, 1749-016 Lisboa, Portugal\\
\verb"sfcorreia@fc.ul.pt"\\

\small\noindent\textsc{Filipe Oliveira}\\
Mathematics Department and CEMAPRE\\
\noindent	
	ISEG, Universidade de Lisboa\\
	Rua do Quelhas 6, 1200-781 Lisboa, Portugal\\
\verb"foliveira@iseg.ulisboa.pt"\\

\end{document}